\documentclass[12pt,reqno]{amsart}
\usepackage{graphicx}
\usepackage{amsmath,amssymb,amsthm}
\usepackage[shortlabels]{enumitem}
\usepackage[hmargin=1in,vmargin=1in]{geometry}
\usepackage[unicode,breaklinks=true,colorlinks=true]{hyperref}
\usepackage[dvipsnames]{xcolor}
\usepackage{color}
\usepackage{cite}

\usepackage[normalem]{ulem} 
\usepackage{tikz}
\usepackage{comment}
\usepackage{psfrag}
\usetikzlibrary{patterns}
\usepackage{float}
\usepackage{epsfig} 

\newtheorem{thm}{Theorem}[section]
\newtheorem{lem}[thm]{Lemma}

\newtheorem{defn}[thm]{Definition}

\newtheorem{assum}[thm]{Assumption}

\theoremstyle{definition}
\newtheorem{eg}[thm]{Example}
\newtheorem{obs}[thm]{Remark}

\newcommand{\tnum}{\rm(\roman*)}
\newcommand{\rnum}{\rm(\alph*)}

\numberwithin{equation}{section}

\def\k0{\kappa_0}

\def\bu{\mathbf{u}}

\def\bu{{\bf{u}}}
\def\bF{{\bf{f}}}
\def\bk{{\bf{k}}}
\def\bx{{\bf{x}}}
\def\b0{{\bf{0}}}
\def\Bs{B_{\rm s}}

\newcommand{\hilite}[1]{{\color{blue} #1}}
\renewcommand{\hilite}[1]{#1}

\begin{document}
\title[Intrinsic expansion approach to the Galerkin approximations for the {NSE}]
{An intrinsic expansion approach to the Galerkin approximations for the Navier--Stokes equations (with an Appendix by Chengzhang Fu)}

\author[L. Hoang]{Luan Hoang$^{1,*}$}
\address{$^1$Department of Mathematics and Statistics,
Texas Tech University\\
1108 Memorial Circle, Lubbock, TX 79409--1042, U. S. A.}
\email{luan.hoang@ttu.edu}

\author[M. S. Jolly]{Michael S. Jolly$^2$}
\address{$^2$Department of Mathematics\\
Indiana University\\ Bloomington, IN 47405, U. S. A.}
\email{msjolly@iu.edu}

\author[(for an appendix) C. Fu]{ (for an Appendix) Chengzhang Fu$^3$}
\address{$^3$Department of Mathematics \& Statistics\\
McMaster University\\ Hamilton, ON L8S 1C7, Canada}
\email{fuc49@mcmaster.ca}

\date{\today}
\thanks{M.S.J. was supported by Simons grant MP-TSM-00002337.  The computational component was supported in part by Lilly Endowment, Inc., through its support for the Indiana University Pervasive Technology Institute.}
\subjclass[2020]{76D05, 35C20, 35Q30, 35A35}
\keywords{Navier--Stokes equations, Galerkin approximation, convergence analysis,  asymptotic expansion, compact embedding}

\dedicatory{Dedicated to Professor Roger Temam on the occasion of his 85th birthday.}
\thanks{$^*$Corresponding author.}

\begin{abstract}
We study the Galerkin approximation of the three-dimensional  Navier--Stokes equations.  In particular, we examine the convergence of these solutions in a sequence of finite dimensional spaces as the dimension goes to infinity. For any sequence of steady state or, respectively, time dependent Galerkin solutions that converges to a solution of the  Navier--Stokes equations, we obtain a subsequence with an intrinsic asymptotic expansion in appropriate nested function spaces. Consequently, an induced asymptotic expansion is obtained in a more standard  spatial Sobolev or, respectively, spatiotemporal Sobolev--Lebesgue space. In the case of steady states, we establish certain relations among leading terms of this expansion. 
\end{abstract}

\maketitle

\tableofcontents


\section{Introduction} \label{intro}

It is well known that from solutions to Galerkin approximations, one can extract a subsequence that converges (in certain topologies) to a 
 weak solution of the Navier--Stokes equations (NSE) \cite{CF88,TemamAMSbook,FMRTbook}.
There are other ways in which solutions to Galerkin approximations (hereafter referred to as Galerkin solutions) have been useful in the study of the NSE. In \cite{HeywoodRannacher} Heywood and Rannacher infer global existence of smooth solutions of the NSE from stability of nearby Galerkin solutions.
Constantin, Foias and Temam provide in \cite{CFTGalerkin} sufficient conditions on long time behavior of the Galerkin solution for the existence of a nearby steady state of the NSE (see also \cite{TitiGalerkin} for a practical form of these conditions).  Otherwise, little is known about the {\it nature} of the convergence of Galerkin solutions to 
NSE solutions. 

In this paper we study the convergence of Galerkin solutions through a  recently developed theory of asymptotic expansions \cite{FHJ,HJ1}.
The first paper \cite{FHJ} introduced an asymptotic expansion in  normed spaces  in the form
\begin{equation}\label{ex0}
v_n=v+\Gamma_{1,n}w_1+\Gamma_{2,n}w_2+\cdots+\Gamma_{k,n}w_k+\cdots,
\end{equation}
where positive numbers $\Gamma_{k,n}$ and their quotients $\Gamma_{k+1,n}/\Gamma_{k,n}$ converge to zero as $n\to\infty$.
This is referred to as an \emph{intrinsic expansion} in \cite{HJ1} for two reasons. If \eqref{ex0} is viewed as an expansion in $w_k$, then $w_k$ are base functions, and  they depend on the sequence $v_n$. 
If \eqref{ex0} is viewed as an expansion in  $\Gamma_{k,n}$, then  $\Gamma_{k,n}$ are base decaying rates, and  they also depend on $v_n$. 

This notion was used in \cite{FHJ} to study solutions for a sequence of Grashof numbers tending to infinity for any {\it fixed} Galerkin approximation of the NSE.  It made explicit use of fact that the phase space was finite dimensional.  
The development of \eqref{ex0} was then extended in \cite{HJ1} to include certain degenerate types (when some $w_k=0$) and, more critically to expansions in nested infinite dimensional spaces. It was then applied to the study of steady state solutions of the NSE itself also for Grashof numbers going to infinity.   

In this paper we fix the Grashof number and instead study intrinsic expansion of sequences of Galerkin solutions as the dimension of the projected space goes to infinity. We establish the existence of this expansion for both periodic and no-slip boundary conditions in three dimension and for time dependent solutions as well as steady states.   In the steady state case 
the expansion is used to make rigorous connections between the leading terms for the solution and nonlinear term.

We should distinguish the intrinsic expansion of Galerkin solutions from Gevrey regularity.  
This is easiest to see in the periodic case on $[0,2\pi]^2$ with the solution to the NSE and its Galerkin approximation written as a Fourier series 
$$
v(\bx,t)=\sum_{\bk\in \mathbb{Z}^d} \hat{v}_\bk(t) e^{i \bk\cdot \bx}, \qquad 
v_n(\bx,t)=\sum_{\bk\in \mathbb{Z}^d} \hat{v}_{\bk,n}(t) e^{i \bk\cdot \bx}\;.$$
We rewrite 
$$
v_n(\bx,t)=v(\bx,t)+\sum_{\bk\in \mathbb{Z}^d} (\hat{v}_{\bk,n}(t)-\hat{v}_\bk(t)) e^{i \bk\cdot \bx} \;.$$
After ordering the wave vectors on the integer lattice as $\bk_1,\bk_2, \bk_3, \ldots$ as one does the eigenvalues (with repetition),
it might seem natural to identify $\Gamma_{j,n}$ as $|\hat{v}_{\bk_j,n}(t)-\hat{v}_{\bk_j}(t)|$ and $w_j$ as $e^{i\bk_j\cdot\bx}$. After all it is well known that 
$|\hat v_\bk(t)| \lesssim e^{-\lambda|\bk|}$, where $\lambda$ is the radius of spatial analyticity, provided the body force enjoys the same property.  Yet this does not imply that 
$$
\frac{|\hat{v}_{\bk_{j+1},n}(t)-\hat{v}_{\bk_{j+1}}(t)|}{|\hat{v}_{\bk_j,n}(t)-\hat{v}_{\bk_j}(t)|} \to 0, \quad \text{as} \quad n \to \infty$$
even when $|\bk_{j+1}|>|\bk_{j}| $. So $e^{i\bk_j\cdot\bx}$ does not serve as a universal base function $w_j$ in \eqref{ex0} for all convergent sequences of Galerkin solutions, underscoring the dependence of the $w_j$ vectors on each sequence. 

In \hilite{Section} \ref{informal}, we present the main results in simplified forms. They serve as samples of our more technical results in \hilite{Sections} \ref{steadysec} and \ref{timesec} later.
For steady state solutions, we establish an intrinsic expansion in Theorem \ref{E1a} and its properties in Theorem \ref{E3b}.
For time dependent solutions, we obtain in Theorem \ref{tsoln0} an intrinsic expansion.
In subsection \ref{NSEsubsec}, we review the functional setting for the NSE, the Stokes and bilinear operators, (spectral) fractional Sobolev spaces and their compact embeddings. Also presented are some basic inequalities that will be needed later.
We recall in \hilite{Section} \ref{nestedsec} the definitions of strict expansions in nested spaces as well as intrinsic expansions in a normed space.
We state the main Theorems \ref{mainlem} and \ref{maincor} regarding their existence from our previous work.
Example \ref{Pnex} shows that, in general, $v_n=P_n v$ has a degenerate expansion in nested Sobolev spaces. This makes sense of an asymptotic expansion for the forcing term in the Galerkin approximation \eqref{steadyu}.
In \hilite{Section} \ref{steadysec}, we apply the general results in \hilite{Section} \ref{nestedsec} to  study the Galerkin approximations to the steady states of the NSE. 
In Theorem \ref{E1}, we obtain a strict expansion in nested Sobolev spaces and also a unitary or degenerate expansion in a Sobolev space. We explore properties of the obtained expansions in Theorems \ref{EEE} and \ref{E4}. These two theorems reflect two different methods of finding asymptotic expansions for the nonlinear terms. We treat many cases and scenarios based on the asymptotic relations among sequences of coefficients in the expansions obtained for all terms in the NSE.
We establish in \hilite{Section} \ref{timesec} the intrinsic expansion for time dependent solutions, see Theorem \ref{tsoln}. The main idea is the compact embedding in Lemma \ref{compact} for fractional spatial and temporal Sobolev spaces.
It is a natural generalization of a previous result \cite[Chapter 3, Theorem 2.2]{TemamAMSbook} which in turn is a modification of the ubiquitous Aubin--Lions compactness. 
The advantage of Lemma \ref{compact} is that it characterizes the compact embedding by the norms of the same type that involve fractional time derivatives. This allows a simple construction of nested spaces with compact embeddings.
Appendix \ref{ProofAB} contains proofs of basic inequalities for the Stokes  and  bilinear operators.
Appendix \ref{compuapx} illustrates the theoretical findings with some computations.
We treat only the three-dimensional case in this paper knowing that 
the two-dimensional case is quite similar, and in many ways simpler.

\hilite{As a note of caution, each asymptotic expansion obtained in this paper for the NSE applies only to some subsequence of the original sequence of Galerkin solutions.  Its coefficients and vectors as well as their relations heavily depend on the choice of this particular subsequence.}

\section{Main results} \label{informal}
The incompressible NSE for velocity $\bu(\bx,t)$, pressure $p(\bx,t)$ with a  body force $\bF(\bx,t)$, for the spatial variable $\bx$ \hilite{in a domain} in $\mathbb R^d$ ($d=3$) and time variable $t\ge 0$, are 

\begin{equation}\label{NSE}
\left\{ 
\begin{aligned}
&\frac{\partial \bu}{\partial t} -
\nu \Delta \bu  + (\bu\cdot\nabla)\bu + \nabla p = \bF , \\
&\text {div} \ \bu = 0,
\end{aligned}
\right.
\end{equation}
where $\nu>0$ is the kinematic viscosity.
\hilite{ We treat both the no-slip boundary condition in a bounded domain as well as periodic boundary conditions.}
 
It is well known that the initial boundary value problem for \eqref{NSE} can be written in the functional form 
\begin{equation} \label{fNSE}
\frac{{\rm d} u}{{\rm d} t}+ \nu Au+B(u,u)=f \text{ in }V', \quad u(0)=u_0  \in H,
\end{equation}
with $u(t)=\mathbf u(\cdot,t)$,  $f(t)=\mathbf f (\cdot,t)$  and $\bF\in L^2_{\rm loc }([0,\infty),H)$. 
See subsection \ref{NSEsubsec} below for details of the function spaces $H$, $V$, $D(A^\alpha)$, the Stokes operator $A$, and the bilinear operator $B(\cdot,\cdot)$.
For now, we note that $A$ has positive eigenvalues 
$$\lambda_1\le \lambda_2\le \lambda_3\le \ldots \text{ with } \lim_{j\to\infty}\lambda_j=\infty.$$  
There is a (complete) orthonormal basis  $\{\varphi_j:j\in\mathbb N\}$ of $H$ with each $\varphi_j$ being an 
 eigenfunction  of $A$ corresponding to $\lambda_j$.
By the standard scaling of $x$ and $t$, we can assume that $\nu=1$ and 
$\lambda_1 = 1$.

For $n\in\mathbb N$, let $P_n$ denote the projector from $L^2(\Omega)^d$ to the span of $\{\varphi_1, \ldots, \varphi_n\}$.
The Galerkin approximation of \eqref{fNSE} is 
\begin{equation}\label{Galerkinu}
\frac{{\rm d}u_n}{{\rm d}t}+ Au_n+ P_nB(u_n,u_n)=P_nf, \quad   u_n(0)=P_n u_0 ,\quad u_n(t)\in P_n H.
\end{equation}

\hilite{Our main results are in Theorems \ref{E1a}, \ref{E3b} and \ref{tsoln0}. Recall that the asymptotic expansions and their properties are obtained for certain subsequences of the original sequence of the Galerkin solutions.}

\subsection{Steady state case}
Let $f\in H$. For $n\in\mathbb N$, let $u_n$ be a steady state solution of \eqref{Galerkinu}, i.e., 
\begin{equation}\label{steadyu}
 Au_n+ P_nB(u_n,u_n)=P_nf,\quad u_n\in P_n H.
\end{equation}

\begin{thm}\label{E1a}
Let $s=1$ if $f\in V$, or $s\in [0,1)$ if $f\in H$.
There exists a subsequence $(v_n)_{n=1}^\infty$ of $(u_n)_{n=1}^\infty$ with a unitary or degenerate  expansion, in the sense of Definition \ref{defsimnorm},  of the form 
\begin{equation}\label{b4a}
    v_n\approx v+\sum_{k\in\mathcal N}\Gamma_{k,n}w_k
    \text{ in } D(A^s), 
\end{equation}
\hilite{Above, $\mathcal N=\emptyset$ or $\mathcal N=\{1,2,\ldots N\}$ for some integer $N\ge 1$ or $\mathcal N=\mathbb N$, $w_k\in D(A^s)$ and $\Gamma_{k,n}>0$ for all $k\in \mathcal N$ and $n\ge 1$.} 
\end{thm}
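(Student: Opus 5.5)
Our plan is to deduce Theorem \ref{E1a} from the abstract existence results of Section \ref{nestedsec} (Theorems \ref{mainlem} and \ref{maincor}). The key input is a uniform bound for $(u_n)$ in a space compactly embedded in $D(A^s)$, followed by convergence of a subsequence to a limit $v$.

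\textbf{Step 1: uniform bounds.} Taking the inner product of \eqref{steadyu} with $u_n$ and using $\langle P_nB(u_n,u_n),u_n\rangle=0$ gives $|A^{1/2}u_n|\le |f|$, since $\lambda_1=1$. Next we test with $Au_n$, or equivalently apply $A^{-1}$ and estimate $B(u_n,u_n)$ using the basic inequalities for the bilinear operator from subsection \ref{NSEsubsec}. This step is subtle in 3D, because the steady enstrophy estimate closes only for small data. We therefore plan a bootstrap instead. From the $V$-bound and the Sobolev embeddings, $B(u_n,u_n)$ is bounded in $D(A^{-1/4})$ (indeed $|A^{-1/4}B(u,u)|\lesssim |A^{1/2}u|^2$). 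Hence $Au_n=P_nf-P_nB(u_n,u_n)$ is bounded in $D(A^{-1/4})$, so $u_n$ is bounded in $D(A^{3/4})$. Iterating with the estimate $|B(u,u)|\lesssim |A^{1/2}u|\,|A^{3/4}u|$, we get $B(u_n,u_n)$ bounded in $H$, so $u_n$ is bounded in $D(A)$. If $f\in V$, one more round uses $|A^{1/2}B(u,u)|\lesssim |A u|^2$, available in 3D since $D(A)\subset H^2$ is an algebra-type space, together with the fact that $P_n$ commutes with $A$. This gives $u_n$ bounded in $D(A^{3/2})$. Thus the sequence is bounded in $D(A^{s'})$ for some $s'>s$: take $s'=1$ when $s<1$ and $f\in H$, and $s'=3/2$ when $s=1$ and $f\in V$.

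\textbf{Step 2: nested spaces and a convergent subsequence.} We choose a decreasing chain $D(A^{s'})\subset\cdots\subset D(A^{s_2})\subset D(A^{s_1})\subset D(A^{s})$, for instance with $s_j\downarrow s$, or simply the pair $D(A^{s'})\Subset D(A^s)$. Each embedding is compact because $\lambda_j\to\infty$. By compactness, a subsequence converges in $D(A^s)$ to some $v$. Passing to the limit in \eqref{steadyu} (using $P_n\to I$ strongly) shows that $v$ is a steady NSE solution, although this is not needed for the expansion itself.

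\textbf{Step 3: apply the abstract theorem.} The sequence $v_n-v$ is bounded in the stronger space and converges to $0$ in $D(A^s)$. Theorem \ref{maincor}, which rests on the compact embedding of the bounded set, then yields a further subsequence with a unitary or degenerate expansion $v_n\approx v+\sum_{k\in\mathcal N}\Gamma_{k,n}w_k$ in $D(A^s)$. If $v_n=v$ eventually, we take $\mathcal N=\emptyset$. Otherwise the construction is as follows. Set $\Gamma_{1,n}=\|v_n-v\|_{D(A^s)}$ and extract a limit $w_1$ of the normalized differences. Then iterate on the remainders $v_n-v-\Gamma_{1,n}w_1$, with the construction stopping at a finite $N$ or continuing via a diagonal argument.

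The main obstacle is this last point. For the normalized remainders to have convergent subsequences, they need to be bounded in a compactly embedded space, and boundedness of $v_n$ alone does not give boundedness of $(v_n-v)/\Gamma_{1,n}$ in the stronger norm. This is exactly why the abstract Theorem \ref{mainlem} is formulated so as to allow degenerate expansions, and we will rely on its hypotheses as stated: boundedness in the stronger space plus convergence in $D(A^s)$. The remaining care is checking that the Step 1 bootstrap is legitimate in 3D, namely the precise exponents in the $B$-estimates from Appendix \ref{ProofAB}.
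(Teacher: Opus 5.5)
Your architecture is the paper's. Theorem~\ref{E1a} is Theorem~\ref{E1}(ii) with $\alpha_*=0$ or $1/2$: a uniform bound from Lemma~\ref{R1}, then compactness, then Theorem~\ref{maincor} with $Z_k=D(A^{s_k})$, $s_k$ decreasing, $s_k>s$, and $Z_\infty=D(A^s)$. For $f\in H$, and for $f\in V$ with periodic boundary conditions, your argument works.

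\textbf{Where it fails.} The $D(A^{3/2})$ bound for $f\in V$ under the no-slip condition is wrong. The estimate $|A^{1/2}B(u,u)|\lesssim |Au|^2$ is not merely a Sobolev product estimate. It asserts $B(u,u)\in D(A^{1/2})=V$, so $B(u,u)=0$ on $\partial\Omega$. But $B(u,u)=(u\cdot\nabla)u-\nabla q$ with $\Delta q=\nabla\cdot((u\cdot\nabla)u)$ and $\partial_{\mathbf n}q=0$. Its trace $-\nabla q|_{\partial\Omega}$ is tangential and in general nonzero. The algebra property of $H^2$ only gives $B(u,u)\in H^1$.

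The bound itself, not just this proof of it, is false. Suppose $|A^{3/2}u_n|\le M$. Then for $m\le n$,
$|A^{1/2}P_mB(u_n,u_n)|\le |A^{1/2}P_nB(u_n,u_n)|\le |A^{1/2}f|+M$.
Passing to the limit along a subsequence with $u_n\to v$ in $D(A)$ gives $B(v,v)\in V$. This is a nonlocal compatibility condition on the limiting steady state, and it does not hold in general. The paper's Lemma~\ref{R1} relies on the same estimate (\eqref{AjB} with $m=1$), so the same caveat applies there.

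\textbf{Repair.} Any $s'>1$ suffices, so the fix is easy. For $0<\theta<1/4$ one has $D(A^\theta)=H^{2\theta}(\Omega)^3\cap H$ with equivalent norms, since no boundary condition is imposed in this range. Hence
$|A^{1+\theta}u_n|\le |A^\theta f|+|A^\theta B(u_n,u_n)|\le |A^{1/2}f|+C\|(u_n\cdot\nabla)u_n\|_{H^1}\le |A^{1/2}f|+C|Au_n|^2$,
which is bounded by your $D(A)$ estimate. Then run Steps 2--3 with $s'=1+\theta$.

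\textbf{Two smaller corrections.} First, the direct estimate closes for all data, not only small data. Testing with $Au_n$ gives $|Au_n|^2\le c\|u_n\|^{3/2}|Au_n|^{3/2}+|f||Au_n|$, and since $\|u_n\|\le |f|$ and $3/2<2$ this yields a bound; this is how the paper proceeds. Your bootstrap is valid but unnecessary. Second, Theorem~\ref{maincor} needs convergence in the top space $Z_0$ of an infinite compactly nested chain lying inside $D(A^s)$. So take $s<\cdots<s_1<s_0<s'$ and extract convergence in $D(A^{s_0})$; the pair $D(A^{s'})\Subset D(A^s)$ alone is not enough. The construction behind the theorem normalizes the $k$-th remainder in $Z_{k-1}$ and takes limits in $Z_k$. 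That is exactly what removes the obstacle you describe; normalizing by $\|v_n-v\|_{D(A^s)}$ would not.
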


Theorem \ref{E1a} is a special case of Theorem \ref{E1}(ii)  below (with $\alpha_*=0$ or $1/2$). \hilite{Briefly speaking, the expansion \eqref{b4a} means that
\begin{align*}
    \|w_k\|_{D(A^s)}&=1,\quad 
    \left\|v_n-\left(v+\sum_{j=1}^{k-1}\Gamma_{j,n} w_j\right)\right\|_{D(A^s)} =\mathcal O(\Gamma_{k,n}) \text{ as }n\to\infty,\\
    \lim_{n\to\infty}\Gamma_{k,n}&=0\quad\text{ and } \quad
    \lim_{n\to\infty} \frac{\Gamma_{k,n}}{\Gamma_{k-1,n}}=0.
\end{align*}

}
 
Now assume $f\in V\setminus\{0\}$. We use the asymptotic  expansion \eqref{b4a} with $s=1$.
A similar expansion is constructed in \hilite{Section} \ref{steadysec} for a subsequence $v_n=u_{\varphi(n)}$ involving the nonlinear term
\begin{equation}\label{PnBexa}
    P_{\varphi(n)}B(v_n,v_n) \approx B(v,v)+\sum_{k\in\mathcal N_1} \rho_{j,n} b_j \; \text{ in } H.
\end{equation} 
\hilite{The expansion \eqref{PnBexa} has the same meaning as \eqref{b4a} with $\mathcal N_1$ being a subset of $\mathbb N$ that plays the role of $\mathcal N$ in \eqref{b4a}.}

 \begin{defn}\label{deforder}
For sequences of positive numbers
 $(\xi_n)_{n=1}^{\infty}$ and 
$(\eta_n)_{n=1}^{\infty}$, we
write  $\xi_n \succ\eta_n$  if $\xi_n/\eta_n \to \infty$ as $n \to
\infty$, and  $\xi_n \sim \eta_n$ if  $\xi_n/\eta_n \to \lambda \in (0,\infty)$.  We write $\xi_n \succsim \eta_n$ if either  $\xi_n\succ\eta_n$ or $\xi_n \sim\eta_n$.
A set  $X$ of sequences of positive numbers is called \emph{totally comparable} if it holds that for any sequences $(\xi_n)_{n=1}^\infty,(\eta_n)_{n=1}^\infty\in X$, either $\xi_n\succsim \eta_n$ or  $\eta_n\succ \xi_n$.  
 \end{defn}

This allows us to compare the coefficients in the asymptotic expansions \eqref{b4a} and \eqref{PnBexa} and  deduce rigorous relations between leading vectors.

\begin{thm}\label{E3b} Assume that $v_n \ne v$ and $Q_nf\ne 0$ for all $n$.
Let $G_{n}=\|Q_n f\|_{L^2}$. Assume the set $\{(\Gamma_{j,n})_{n=1}^\infty, (\rho_{k,n})_{n=1}^\infty,(G_n)_{n=1}^\infty:j\in\mathcal N,k\in \mathcal N_1\}$ is totally comparable. One of the following cases must hold.

\noindent Case 1: ($b_1$ in \eqref{PnBexa} exists) and 
    $(\Gamma_{1,n}\succ G_n \text { or } \rho_{1,n}\succ G_n)$.
\begin{enumerate}[label=\tnum]
    \item Case $\Gamma_{1,n}\sim \rho_{1,n}$.  Then one has 
        \begin{equation*}
    Aw_1+\mu_0 b_1=0\text{ with }  \mu_0=\lim_{n\to\infty} \frac{\rho_{1,n}}{\Gamma_{1,n}}>0.
    \end{equation*}
In addition, let $w_2$, $b_2$ exist and Assumption \ref{chiassum} hold for  $\beta=0$. 
Then either there are $\mu_1,\mu_2,\mu_3\in\mathbb R$, with at least  one of them being $1$ and ($\mu_1\mu_3>0$ or $\mu_1=\mu_3=0$), such that
\begin{equation*}
    \mu_1 Aw_2+\mu_2 b_1 +\mu_3 b_2=0,
\end{equation*}
   or we have 
    \begin{equation*}
        \|v_n-v-\Gamma_{1,n}w_1\|_{D(A)}=\mathcal O(\lambda_{\varphi(n)+1}^{-1/2})\text{ as }n\to\infty.
    \end{equation*}

\item Case $\Gamma_{1,n}\succ \rho_{1,n}$. Then $w_1=0$ and one has the degenerate expansion
\begin{equation*}
    v_n\approx v +\sum_{k\in\mathbb N}\Gamma_{k,n}\cdot 0 \text{ in }D(A).
\end{equation*}

\item Case $\rho_{1,n}\succ \Gamma_{1,n}$. Then $b_1=0$ and  one has the degenerate expansion
\begin{equation*}
    P_{\varphi(n)}B(v_n,v_n)\approx B(v,v) +\sum_{k\in\mathbb N}\rho_{k,n}\cdot 0 \text{ in }H.
\end{equation*}
\end{enumerate}

\noindent Case 2:  $b_1$ in \eqref{PnBex} does not exist, or otherwise, 
($G_n\succsim \Gamma_{1,n}$ and $G_n\succsim \rho_{1,n}$). 
Then 
\begin{equation*}
    \|v_n-v\|_{D(A)}=\mathcal  O(\lambda_{\varphi(n)+1}^{-1/2})\text{ as }n\to\infty.
\end{equation*}
\end{thm}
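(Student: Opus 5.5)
The plan is to subtract the limiting equation from the Galerkin equation and compare asymptotic coefficients. Passing to the limit in \eqref{steadyu} along the subsequence $v_n=u_{\varphi(n)}$ gives $Av+B(v,v)=f$. Write $Q_n=I-P_{\varphi(n)}$. Applying $P_{\varphi(n)}$ to the limit equation and subtracting it from \eqref{steadyu} yields
\begin{equation*}
A(v_n-P_{\varphi(n)}v)+\bigl(P_{\varphi(n)}B(v_n,v_n)-P_{\varphi(n)}B(v,v)\bigr)=0 .
\end{equation*}
Rewriting this in terms of the full differences gives the key identity
\begin{equation*}
A(v_n-v)+\bigl(P_{\varphi(n)}B(v_n,v_n)-B(v,v)\bigr)= -Q_nAv - Q_nB(v,v) = -Q_n f ,
\end{equation*}
where the last equality uses $Q_n(Av+B(v,v))=Q_nf$. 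The right-hand side has $H$-norm exactly $G_n$, up to the indexing of $Q_n$ along $\varphi(n)$. Into the left-hand side I will substitute the expansion \eqref{b4a} with $s=1$ (so $A(v_n-v)\approx\sum\Gamma_{k,n}Aw_k$ in $H$, since $A:D(A)\to H$ is an isometry in the spectral norm) together with \eqref{PnBexa}.

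\emph{Case 2.} The hypothesis says the remainders are controlled: $\|A(v_n-v)\|\le C\Gamma_{1,n}+\|\text{nonlinear}\|$, and total comparability together with $G_n\succsim\Gamma_{1,n},\rho_{1,n}$ gives $\|v_n-v\|_{D(A)}=\mathcal O(G_n)$. When $b_1$ does not exist, the nonlinear difference vanishes identically and the same conclusion holds. Since $f\in V$, we have $G_n=\|Q_nf\|\le\lambda_{\varphi(n)+1}^{-1/2}\|A^{1/2}f\|$, which gives the claimed rate.

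\emph{Case 1.} Divide the identity by $\max(\Gamma_{1,n},\rho_{1,n})$; the right-hand side $-Q_nf$ then tends to $0$. In subcase (i) we obtain $Aw_1+\mu_0b_1=0$ in the limit, because the remainders are $o(\Gamma_{1,n})$ by the definition of the expansion. In subcase (ii), dividing by $\Gamma_{1,n}$ gives $Aw_1=0$, hence $w_1=0$, and this contradicts $\|w_1\|=1$ unless the expansion is degenerate. Following the convention of Definition \ref{defsimnorm}, a vanishing leading vector forces the whole expansion to be degenerate (all $w_k=0$), and iterating the argument at each order confirms this. Subcase (iii) is symmetric, giving $b_1=0$.

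For the second-order statement in (i), subtract the first-order terms using $Aw_1=-\mu_0b_1$:
\begin{equation*}
\Gamma_{2,n}Aw_2+\Bigl(\rho_{1,n}-\mu_0\Gamma_{1,n}\Bigr)b_1+\rho_{2,n}b_2+o(\cdot)=-Q_nf.
\end{equation*}
The hard step is the term $(\rho_{1,n}-\mu_0\Gamma_{1,n})$. This is where Assumption \ref{chiassum} with $\beta=0$ is needed: it should guarantee that this difference is either eventually zero or a positive sequence comparable to the others, with eventually constant sign. Once that holds, I divide by the dominant among $\Gamma_{2,n}$, $|\rho_{1,n}-\mu_0\Gamma_{1,n}|$, $\rho_{2,n}$ and $G_n$.

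If $G_n$ is not dominant, the limit yields $\mu_1Aw_2+\mu_2b_1+\mu_3b_2=0$, with $\mu_1,\mu_3\ge0$ arising as limits of ratios of positive sequences. The coefficient $\mu_2$ may have either sign, and the dominant term is normalized to $1$. The condition $\mu_1\mu_3>0$ or $\mu_1=\mu_3=0$ needs an extra argument. If exactly one of $\Gamma_{2,n},\rho_{2,n}$ dominated the other, say $\Gamma_{2,n}\succ\rho_{2,n}$, I expect a contradiction with the orthogonality and normalization properties of the strict expansion, or else I would pass to the alternative conclusion. I anticipate this bookkeeping to be the main obstacle.

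If $G_n$ dominates all three, then, as in Case 2, $\|v_n-v-\Gamma_{1,n}w_1\|_{D(A)}=\mathcal O(G_n)=\mathcal O(\lambda_{\varphi(n)+1}^{-1/2})$.
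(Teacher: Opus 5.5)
Apart from one step, your argument is the paper's. You derive \eqref{releq3} and divide by the dominant first-order coefficient, which beats $G_n$, to get $Aw_1+\mu_0b_1=0$, $w_1=0$ or $b_1=0$. You then reduce to \eqref{releq4} via $\rho_{1,n}=\Gamma_{1,n}(\mu_0+\chi_n)$. For the rate alternatives you use $\|v_n-v\|_{D(A)}=\Gamma_{1,n}\|w_{1,n}\|_{D(A)}=\mathcal O(\Gamma_{1,n})$, resp.\ $\|v_n-v-\Gamma_{1,n}w_1\|_{D(A)}=\mathcal O(\Gamma_{2,n})$, together with $G_n\le\lambda_{\varphi(n)+1}^{-1/2}\|f\|$. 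That direct bound is all Case~2 needs; your inequality $\|A(v_n-v)\|\le C\Gamma_{1,n}+\|\text{nonlinear}\|$ is beside the point. Likewise, no iteration is needed in (ii)/(iii): by Definition~\ref{refinex}, an expansion with $w_1=0$ can only be the degenerate one with all $w_k=0$.

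The gap is the sign condition on $(\mu_1,\mu_3)$, and the route you suggest for it cannot work.
\begin{itemize}
\item These expansions carry no orthogonality; $w_2$ may even be parallel to $w_1$. For instance, if $\|w_1\|_Z=1$, then $v+(n^{-1}+n^{-2})w_1$ has a unitary expansion in $Z$ with $\Gamma_{1,n}=n^{-1}$, $\Gamma_{2,n}=n^{-2}$, $w_2=w_1$.
\item You also cannot ``pass to the alternative'', because $\Gamma_{2,n}\succ\rho_{2,n}$ says nothing about whether $G_n\succsim\Gamma_{2,n}$.
\end{itemize}
The paper excludes no ordering; its sign information comes only from the normalization. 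It divides \eqref{releq4} by $\xi_n$, or by $\xi_n\,\mathrm{sign}(\chi_n)$ when $\xi_n=\Gamma_{1,n}|\chi_n|$. Hence $\mu_1=\lim\Gamma_{2,n}/(\pm\xi_n)$ and $\mu_3=\lim\rho_{2,n}/(\pm\xi_n)$ carry the \emph{same} sign $\pm$. Your claim that $\mu_1,\mu_3\ge0$ while $\mu_2$ has either sign clashes with normalizing the dominant coefficient to $1$ when that coefficient is $\mu_2$ and $\chi_n<0$; then $\mu_1,\mu_3\le 0$.

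What this argument gives is that $\mu_1$ and $\mu_3$ are never of strictly opposite signs, i.e.\ $\mu_1\mu_3\ge 0$, and that is what you should prove. The literal dichotomy ``$\mu_1\mu_3>0$ or $\mu_1=\mu_3=0$'' is not delivered by this limit argument, in the paper's version or yours. Suppose $\Gamma_{2,n}\succ\rho_{2,n}$ and $\Gamma_{2,n}\succsim\Gamma_{1,n}|\chi_n|$. Then one gets $\mu_1=1$, $\mu_3=0$, i.e.\ $Aw_2+\mu_2b_1=0$. By \eqref{sharea} this says $w_2=(\mu_2/\mu_0)w_1$, or $w_2=0$ if $\mu_2=0$, which is admissible for a degenerate expansion. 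Nothing in the hypotheses rules this out, so no contradiction of the kind you anticipate is available.
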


Theorem \ref{E3b} is a special case of Theorem \ref{EEE} below with $\alpha_*=1/2$, $s=1$ and $\beta=0$.
    
\subsection{Time dependent solutions}\label{maintime}

 Let $T>0$, $u_0\in H$ and $f\in L^2(0,T;H)$. Let $u_n$, for $n\in\mathbb N$, be a solution of \eqref{Galerkinu}.
Let $(v_n)_{n=1}^\infty$ be a subsequence of $(u_n)_{n=1}^\infty$  such that $v_n$ converges in $L^2(0,T;V)$ weakly and in $L^2(0,T;H)$ strongly to a  weak solution $u$ of the NSE \eqref{fNSE}. The existence of such a subsequence is standard, see e.g. \cite{CF88,TemamAMSbook,FMRTbook}.

\begin{thm}\label{tsoln0}
    There exists a subsequence of $v_n$ (also denoted $v_n$) that possesses a unitary or degenerate expansion, in the sense of Definition \ref{defsimnorm},
    \begin{equation}\label{vtex0}
        v_n\approx u+\sum_{k\in \mathcal N_2} \bar \Gamma_{k,n} \bar w_k\text{ in } L^2(0,T;H),
    \end{equation}
  \hilite{ for some set $\mathcal N_2$ of the same type as $\mathcal N$ in \eqref{b4a}.}
\end{thm}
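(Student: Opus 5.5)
The plan is to reduce Theorem \ref{tsoln0} to the abstract existence results of Section \ref{nestedsec}, namely Theorems \ref{mainlem} and \ref{maincor}. Those results yield a strict expansion of a convergent sequence in a chain of nested spaces whose consecutive embeddings are compact, and from it a unitary or degenerate expansion in the largest space. The work therefore splits into two tasks: build a suitable chain of spaces ending in $L^2(0,T;H)$, and check that $(v_n-u)$ is bounded in each space of the chain and tends to zero in $L^2(0,T;H)$.

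\textbf{Nested spaces.} For $\alpha\in(0,1/2]$ and $\gamma>0$ I take the spatiotemporal space $Y_{\alpha,\gamma}$ of functions $w\in L^2(0,T;D(A^{\alpha}))$ that also have a fractional time derivative of order $\gamma$ in $L^2$ with values in $V'$, or equivalently the Fourier-in-time weighted norm used in Lemma \ref{compact}. I then fix decreasing sequences $\alpha_k\downarrow 0$ and $\gamma_k\downarrow 0$, with $\alpha_1\le 1/2$ and $\gamma_1<1/4$. The chain is
\[
Y_{\alpha_1,\gamma_1}\subset Y_{\alpha_2,\gamma_2}\subset\cdots\subset L^2(0,T;H).
\]
Lemma \ref{compact} gives that each embedding $Y_{\alpha_k,\gamma_k}\subset Y_{\alpha_{k+1},\gamma_{k+1}}$ is compact, since both the spatial and temporal indices strictly decrease. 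I would also check that $Y_{\alpha_k,\gamma_k}$ embeds compactly into $L^2(0,T;H)$, so that convergence in $L^2(0,T;H)$ transfers to norms in the chain, possibly after interpolation.

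\textbf{Uniform bounds.} This is where I expect the main obstacle. The standard energy estimates for \eqref{Galerkinu} give $v_n$ bounded in $L^\infty(0,T;H)\cap L^2(0,T;V)$. The classical argument of Temam then shows that $v_n$ has $\gamma$-fractional time derivatives bounded in $L^2(\mathbb R;H)$-type norms for every $\gamma<1/4$, after extension by zero outside $[0,T]$. The delicate points are that the estimate be uniform in $n$ and that the norm match exactly the one in Lemma \ref{compact}.

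To handle this I would use the Fourier transform in time of the extended $v_n$. The equation is tested against $\hat v_n(\tau)$, and the bilinear term is controlled by $\|P_nB(v_n,v_n)\|_{V'}\le c\|v_n\|_{L^2}^{1/2}\|v_n\|_{V}^{3/2}$, which lies in $L^{4/3}(0,T)$. The jump terms at $t=0$ and $t=T$ are controlled by $\|P_nu_0\|_{L^2}$ and $\|v_n(T)\|_{L^2}$. Since weak limits preserve the bound, $u$ satisfies the same bounds, so $v_n-u$ is bounded in $Y_{\alpha_1,\gamma_1}$ and hence in every $Y_{\alpha_k,\gamma_k}$. Together with $v_n\to u$ in $L^2(0,T;H)$, interpolation gives $v_n\to u$ in each $Y_{\alpha_k,\gamma_k}$ with $k\ge2$. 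If needed, I shift indices so that convergence holds in every space used.

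\textbf{Conclusion.} With the chain, the boundedness and the convergence in hand, Theorem \ref{mainlem} applied to $v_n-u$ produces a subsequence with a strict expansion in the nested spaces. Theorem \ref{maincor} then converts it into a unitary or degenerate expansion \eqref{vtex0} in the largest space $L^2(0,T;H)$. The vectors $\bar w_k$ are normalized there, and the index set $\mathcal N_2$ is empty, finite, or $\mathbb N$, depending on whether $v_n=u$ eventually and on whether the expansion terminates.
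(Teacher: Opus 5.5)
Your proposal is correct and follows essentially the same route as the paper. The ingredients match: nested spaces $\mathcal H^{\gamma_k}(0,T;D(A^{\alpha_k}),\cdot)$ with strictly decreasing indices, compactness from Lemma \ref{compact}, Temam's uniform fractional-time bound for $\gamma<1/4$, and then Theorem \ref{maincor}. The differences are minor: you get convergence in the first space of the chain by interpolating between the uniform bound and the known $L^2(0,T;H)$ convergence (the paper extracts a subsequence via $Z_{-1}\Subset Z_0$ and then identifies the limit with $u$), and you apply Theorem \ref{maincor} directly with $Z_\infty=L^2(0,T;H)$ rather than first in $\mathcal H^{\gamma_*}(0,T;D(A^{\alpha_*}),H)$ followed by renormalization.
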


Theorem \ref{tsoln0} is a simple consequence of  Theorem \ref{tsoln} below and will be proved near the end of \hilite{Section} \ref{timesec}.

\subsection{Navier--Stokes preliminaries}\label{NSEsubsec}
We present some basic facts about the NSE.
Two types of boundary conditions are considered for the NSE.

\begin{itemize}
    \item \textit{Periodic boundary condition.} We consider the NSE with periodic boundary conditions in $\Omega =[0,L]^d$.
Let $\mathcal V$ denote the set of $\mathbb R^d$-valued $\Omega$-periodic divergence-free trigonometric polynomials with zero average over $\Omega$.
Define the spaces 
$$\text{$H=$ closure of $\mathcal V$ in $L^2(\Omega)^d$ and $V=$ closure of $\mathcal V$ in $H^1(\Omega)^d$.}$$ 
\item \textit{No-slip boundary condition.} 
Consider the NSE in an open, connected, bounded set $\Omega$ in $\mathbb R^d$. For the sake of simplicity, we assume the boundary $\partial \Omega$ is of class   $C^\infty$. Let $\mathbf n$ denote the outward normal vector on the boundary. 
Under the no-slip boundary condition 
$$\bu =0\text{ on }\partial \Omega,$$
the relevant spaces are
\begin{equation*} 
H=\{\bu\in L^2(\Omega)^d:\, \nabla \cdot \bu=0, \, \bu\cdot\mathbf n|_{\partial \Omega}=0\}\text{ and } 
V=\{\bu\in H_0^1(\Omega)^d:\, \nabla \cdot \bu=0 \}.
\end{equation*}
\end{itemize}

The inner product and its associated norm in $H$ are those of $L^2(\Omega)^d$ and are denoted by $\langle\cdot,\cdot\rangle$ and $|\cdot|$, respectively.
(We will also use $|\cdot|$ for the modulus of a vector in 
$\mathbb{C}^d$; we assume that the 
meaning will be clear from the context.)

The space $V$ is equipped with the following inner product and norm
\begin{equation}\label{Vnorm}
\langle\!\langle u,v \rangle\!\rangle =\int_{\Omega}\sum_{j=1}^{d} 
\frac{\partial}{\partial x_j}u(\bx)\cdot 
\frac{\partial}{\partial x_j}v(\bx)  {\rm d} \bx
\text{ and }
\|u\|=\langle\!\langle u,u \rangle\!\rangle^{1/2} \text{ for }u,v\in V.
\end{equation}
Note that the norm $\|\cdot\|$ in $V$ above is equivalent to the $H^1(\Omega)^d$-norm.

We use the following embeddings  and identification for $H$, $V$ and their dual spaces $H'$, $V'$
\begin{equation}\label{emiden}
    V\subset H=H'\subset V'.
\end{equation}
For convenience, we define the product $\langle\cdot,\cdot\rangle_{V',V}$ on $V'\times V$ by   $$\langle w,v\rangle_{V',V}=w(v) \text{ for }w\in V', v\in V.$$
For $u\in H$ and $v\in V$, using \eqref{emiden}, we have $\langle u,v\rangle_{V',V}=\langle u,v\rangle$.

Define the bounded linear mapping $A:V\to V'$ and continuous bilinear mapping $B:V\times V\to V'$ by
$$\langle Au,v\rangle_{V',V}=\langle\!\langle u,v \rangle\!\rangle, \quad 
\langle B(u,v),w\rangle_{V',V}=\int_\Omega ((u(\bx)\cdot \nabla )v(\bx))\cdot w(\bx) {\rm d} \bx,$$
for $u,v,w\in V$.
We also denote
$$\Bs(u,v)=B(u,v)+B(v,u).$$

Let $\mathcal{P}$ be the Helmholtz-Leray projection onto $H$, 
that is, the orthogonal projection of $L^2(\Omega)^d$ onto $H$. 
Define the space $D(A)=V\cap H^2(\Omega)^d$ equipped with the inner product $\langle Au, Av\rangle$. Then $D(A)$ is a (real) Hilbert space and its induced norm $\|u\|_{D(A)}=|Au|$ is, in fact, equivalent to the $H^2(\Omega)^d$-norm.
We have 
\begin{align}\label{AP}
    Au&=-\mathcal P\Delta u \in H \text{ for } u\in D(A),\\
\label{BP}
B(u,v)&=\mathcal{P}\left( (u \cdot \nabla) v \right)\in H,\text{ for } u\in D(A),v\in V.
\end{align}
The Stokes operator is the restriction $A:D(A)\to H$.

Recall the orthogonality relation of the bilinear term 
(see for instance \cite{T97})
\begin{equation}\label{Bflip}
\langle B(u,v),w\rangle = -\langle B(u,w),v\rangle,  \text{ so that }
\langle B(u,v),v\rangle=0\text{ for } u,v,w\in D(A),
\end{equation}

We now recall the fractional operators and spaces.
For $\alpha\ge 0$, define the fractional Sobolev space
$$ D(A^\alpha)=\left \{ u=\sum_{n=1}^\infty c_n \varphi_n\in H: \sum_{n=1}^\infty \lambda_n^{2\alpha} |c_n|^2<\infty\right\}$$ 
and the fractional operator
$$ A^\alpha u =\sum_{n=1}^\infty \lambda_n^\alpha c_n \varphi_n \text{ for }
u=\sum_{n=1}^\infty c_n \varphi_n\in D(A^\alpha).$$
In fact, $D(A^\alpha)$ is a Hilbert space with the inner product and induced norm
$$\langle u,v\rangle_{D(A^\alpha)}=\langle A^\alpha u,A^\alpha v\rangle
\text{ and } \|u\|_{D(A^\alpha)}=|A^\alpha u|\text{ for } u,v\in D(A^\alpha).$$
Clearly, $D(A^0)=H$, $D(A^1)=D(A)$, and $D(A^{1/2})=V$ with 
\begin{equation}\label{VH1} 
\|u\|_{D(A^{1/2})}=\|u\|=|A^{1/2}u|.
\end{equation}
Moreover,  we have, thanks to the fact $\lambda_1=1$,
$$\|u\|_{D(A^\alpha)}\ge \|u\|_{D(A^\beta)} \text{ for }\alpha\ge \beta\ge 0, \ u\in D(A^\alpha).$$
In particular,
$$|Au|\ge \|u\|  \text{ for  }u\in D(A),\quad  \|u\|\ge |u| \text{ for  }u\in V.$$

It is well known that, for $\alpha>\beta\ge 0$,  
\begin{equation}\label{comp0}
    \text{the space $D(A^\alpha)$ is compactly embedded into $D(A^\beta)$.}
\end{equation}

In the  periodic case, by using \cite[(2.5), Lemma 2.1]{HM1} (with the Gevrey order $\sigma=0$), we have, for any number $\alpha\ge 0$ and $u,v\in D(A^{\alpha+1/2})\cap D(A)$,
\begin{equation*}
    |A^\alpha B(u,v)|\le K_\alpha (\|u\|^{1/2}|Au|^{1/2}|A^{\alpha+1/2} v|+|A^{\alpha+1/2} u| |A^{3/4}v|),
\end{equation*}
for some positive constant $K_\alpha$.
Since
$|A^{3/4}v|=\langle A^{1/2}v,Av \rangle^{1/2} \le \|v\|^{1/2} |Av|^{1/2},$
it follows that 
\begin{equation}\label{ABper}
    |A^\alpha B(u,v)|\le K_\alpha (\|u\|^{1/2}|Au|^{1/2}|A^{\alpha+1/2} v|+|A^{\alpha+1/2} u|\,  \|v\|^{1/2} |Av|^{1/2}).
\end{equation}
In particular, when $u=v$,
\begin{equation}\label{Aau}
    |A^\alpha B(u,u)|\le 2K_\alpha \|u\|^{1/2}|Au|^{1/2}|A^{\alpha+1/2} u|
    \text{ for } u\in D(A^{\alpha+1/2})\cap D(A).
\end{equation}

In the  no-slip case, we have $\varphi_j\in C^\infty(\bar \Omega)^d$. We recall inequality (1.48) in \cite[Remark 1.6, Chapter 1, page 13]{TemamAMSbook}. For $m\in\mathbb N\cup \{0\}$, there is a positive constant $C$ such that
\begin{equation} \label{Preg}
\|\mathcal Pu\|_{H^m(\Omega)^d}\le C \|u\|_{H^m(\Omega)^d} \text{ for all }u\in H^m(\Omega)^d.
\end{equation}
Then for any $m\in \mathbb N\cup\{0\}$, there exist positive constants $C$ and $C'$ such that
\begin{equation}\label{AHm}
     C'\|u\|_{H^m(\Omega)^d} \le |A^{m/2}u|\le C\|u\|_{H^m(\Omega)^d} \text{ for all }u\in D(A^{\hilite{m}/2}).
\end{equation}
Regarding the bilinear operator $B(\cdot,\cdot)$, an alternative of \eqref{ABper} is the following.
For any $m\in\mathbb N$, there is a positive constant $\bar K_m$ such that 
\begin{equation}\label{AjB}
    |A^{m/2} B(u,v)|\le  \bar K_m |A^{(m+1)/2}u|\, |A^{(m+1)/2}v|
    \text{ for any }u,v\in D(A^{(m+1)/2}).
\end{equation}

The first inequality  in \eqref{AHm} was proved in, e.g., \cite[(4.25) and (4.27)]{CF88}. The inequality \eqref{AjB}, after utilizing \eqref{AHm}, is the same as \cite[(1.8)]{FS87} which was stated without proof.
For the sake of completeness, the proofs the second inequality in \eqref{AHm} and inequality \eqref{AjB} are provided in Appendix \ref{ProofAB}.

\section{Background on intrinsic asymptotic expansions}\label{nestedsec}
In this section, we recall the main definitions and results on intrinsic asymptotic expansions from \cite{HJ1}.

\begin{defn}\label{uexp}
Let $(Z_k,\|\cdot\|_{Z_k})$ for $k\ge 0$ be normed spaces over $\mathbb K=\mathbb C$ or $\mathbb R$ such that
\begin{equation}\label{nested}
Z_0\subset Z_1 \subset Z_2 \subset \ldots \subset Z_k \subset Z_{k+1} \subset\ldots 
\end{equation}
with continuous embeddings. Denote $\mathcal Z=(Z_k)_{k=0}^\infty$.

    We say a sequence $(v_n)_{n=1}^\infty$ in $Z_0$  has a \emph{strict expansion} in $\mathcal Z$ if it satisfies one of the following three conditions.
\begin{enumerate}[label={\rm (\Roman*)}]
    \item\label{d1} There exists $v\in Z_0$ such that  $v_n=v$ for all $n\ge 1$.

    \item\label{d2} There are integer $K\ge 1$, vector $v\in Z_0$,  vectors $w_k\in Z_k$,  vectors $w_{k,n}\in Z_{k-1}$ and positive numbers $\Gamma_{k,n}$, for $n\ge 1$ and $1\le k\le K$, such that 
\begin{align}
\label{b0}
\lim_{n\to\infty} \Gamma_{1,n}&=0,\\
\label{b1}
\lim_{n\to\infty} \frac{\Gamma_{k+1,n}}{\Gamma_{k,n}}&=0 
\text{ for all   $1\le k< K$,}
\end{align}
\begin{align}
\label{b3}
\lim_{n\to\infty} \|w_{k,n}-w_k\|_{Z_k}&=0  \text{ for all $1\le k\le K$,}\\
\label{b2}
\|w_{k,n}\|_{Z_{k-1}}&=1  \text{ for all $n\ge 1$, $1\le k\le K$,}
\end{align}
\begin{equation}\label{b4}
    v_n=\underbrace{v+\sum_{j=1}^{k-1} \Gamma_{j,n}w_j}_{\text{partial sum}}+\underbrace{\Gamma_{k,n}w_{k,n}}_
    {\text{remainder}}\text{ for all $n\ge 1$, $1\le k\le K$, }
\end{equation}
and
\begin{equation*}
    w_{K,n}=w_{K} \text{ for all $n\ge 1$.}
\end{equation*}

   \item\label{d3}  There are vector $v\in Z_0$,  vectors $w_k\in Z_k$, vectors $w_{k,n}\in Z_{k-1}$, positive integers $N_k$  and numbers $\Gamma_{k,n}> 0$ for all $n\ge 1$, $k\ge 1$  such that one has \eqref{b0}, while the limits in  \eqref{b1} and \eqref{b3}  hold for all $k\ge 1$, the equation in \eqref{b4} holds for all $n,k\ge 1$, and  the unit vector condition in  \eqref{b2} holds for all $k\ge 1$, $n\ge N_k$.
\end{enumerate}

We say the expansion in Case \ref{d1} is trivial, the expansions in Cases \ref{d1} and \ref{d2} are finite, and 
the expansion in Case \ref{d3} is infinite.
\end{defn}

We denote the strict expansion in Definition \ref{uexp} by  
\begin{equation}\label{vsum}
    v_n\approx v+\sum_{k\in\mathcal N} \Gamma_{k,n} w_k,
\end{equation}
where the set $\mathcal N$ is empty in Case \ref{d1}, is $\{1,2,\ldots,K\}$ in Case \ref{d2}, and is $\mathbb N$ in Case \ref{d3}.

\begin{obs}\label{remscl}
We recall some facts from \cite[Remark 3.2]{HJ1}.
Assume the strict expansion \eqref{vsum}.
\begin{enumerate}[label=\rnum]
\item We have
\begin{equation}\label{vlim}
    \lim_{n\to\infty} \|v_n-v\|_{Z_0}=0
\end{equation}
and
\begin{equation*}
  \frac{  \left \|  v_n-\left(v+\sum_{j=1}^{k-1} \Gamma_{j,n}w_j\right)\right\|_{Z_k}}{\Gamma_{k-1,n}}
  =\frac{\Gamma_{k,n}}{\Gamma_{k-1,n}}\| w_{k,n}\|_{Z_k}\to 0\cdot \|w_k\|_{Z_k}=0\text{ as }n\to\infty,
\end{equation*}
which justify the term ``expansion".

\item Consider the case $\mathcal N\ne \emptyset$. Then  
\begin{equation}\label{Gamze}
\lim_{n\to\infty} \Gamma_{k,n}=0 \text{ for all }k\in\mathcal N.
\end{equation}
We denote
\begin{equation}\label{Nbar}
 \bar N_k=\begin{cases}
     1, & \text{ in Case \ref{d2} for } 1\le k\le K,\\
     \max\{N_1,N_2,\ldots,N_k\}, & \text{ in Case \ref{d3} for all } k\ge 1.
 \end{cases}
 \end{equation}
Then the unit vector property \eqref{b2} holds true for $k\ge 1$ and $n\ge \bar N_k$.

\item Any subsequence $(v_{n_j})_{j=1}^\infty$ of $(v_n)_{n=1}^\infty$ 
has the strict expansion
\begin{equation}\label{vsubsum}
   v_{n_j}\approx v+\sum_{k\in\mathcal N} \Gamma_{k,n_j} w_k
\end{equation}
which is of the same case \ref{d1}, \ref{d2} or \ref{d3} as  $(v_n)_{n=1}^\infty$.
\end{enumerate}
\end{obs}

By virtue of {\cite[Proposition 3.4]{HJ1}}
the three cases \ref{d1}, \ref{d2}, \ref{d3} in Definition \ref{uexp} are exclusive.
Moreover, according to \cite[Proposition 3.5]{HJ1},
the  strict expansion \eqref{vsum} is asymptotically unique, that is, the set $\mathcal N$ on the right-hand side of  \eqref{vsum} is unique, the vectors $v$ and $w_k$ are uniquely determined for $k\in \mathcal N$, while 
 the vectors   $w_{k,n}$ and positive numbers $\Gamma_{k,n}$ are uniquely determined for $k\in \mathcal N$ and $n\ge \bar N_k$, where $\bar N_k$ is defined by \eqref{Nbar}.

The notion of a strict expansion was first developed in \cite {FHJ} for a fixed Galerkin approximation of the NSE, where the sequence $(v_n)_{n=1}^\infty$ was for Grashof numbers $G_n\to \infty$.  In that work, each space $Z_k=\mathbb R^N$ for some fixed $N$.  The extension to the infinite dimensional case of the full NSE (also as $G\to \infty$) was done in \cite{HJ1}, assuming the embeddings are compact. This, of course, is valid regardless how the sequence $(v_n)_{n=1}^\infty$ is generated. 

\begin{thm}[{\cite[Theorem 3.7]{HJ1}}]\label{mainlem} 
Let  $\mathcal Z=((Z_k,\|\cdot\|_{Z_k}))_{k=0}^\infty$ be a family of normed spaces over $\mathbb K=\mathbb C$ or $\mathbb R$ such that \eqref{nested} holds with all embeddings being compact. Then any convergent sequence in $Z_0$ has a subsequence that possesses a strict expansion in $\mathcal Z$.
\end{thm}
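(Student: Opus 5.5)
The plan is an inductive construction combined with a diagonal argument, where compactness of every embedding produces the next coefficient and vector at each stage. Let $(v_n)$ converge in $Z_0$ to $v$. If $v_n=v$ for infinitely many $n$, pass to that subsequence; it then falls into case \ref{d1}. Otherwise we may assume $v_n\ne v$ for all $n$. Set $\Gamma_{1,n}=\|v_n-v\|_{Z_0}>0$ and $w_{1,n}=(v_n-v)/\Gamma_{1,n}$. Then $\Gamma_{1,n}\to0$, which is \eqref{b0}, and $\|w_{1,n}\|_{Z_0}=1$. The sequence $(w_{1,n})$ is bounded in $Z_0$, and $Z_0$ embeds compactly into $Z_1$. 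So some subsequence of $(w_{1,n})$ converges in $Z_1$ to some $w_1\in Z_1$, which gives \eqref{b3} for $k=1$. The identity \eqref{b4} for $k=1$ holds by construction.

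Inductive step. Suppose we have a subsequence along which \eqref{b0}--\eqref{b4} hold for $k\le m$, with $w_{m,n}\to w_m$ in $Z_m$. Consider $w_{m,n}-w_m\in Z_m$.
\begin{enumerate}[label=\rnum]
\item If $w_{m,n}=w_m$ for infinitely many $n$, pass to those $n$ and stop. This gives case \ref{d2} with $K=m$.
\item Otherwise, after discarding finitely many $n$, set $\delta_n=\|w_{m,n}-w_m\|_{Z_m}\to0$, $\Gamma_{m+1,n}=\Gamma_{m,n}\delta_n$ and $w_{m+1,n}=(w_{m,n}-w_m)/\delta_n$.
\end{enumerate}
In the second case we get $\|w_{m+1,n}\|_{Z_m}=1$ and $\Gamma_{m+1,n}/\Gamma_{m,n}=\delta_n\to0$. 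Moreover,
\[
v_n=v+\sum_{j\le m}\Gamma_{j,n}w_j+\Gamma_{m+1,n}w_{m+1,n}.
\]
Compactness of $Z_m\hookrightarrow Z_{m+1}$ then gives a further subsequence along which $w_{m+1,n}\to w_{m+1}$ in $Z_{m+1}$. Every earlier property survives passage to subsequences, by the same reasoning as Remark \ref{remscl}(c).

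If the process never stops, we have nested subsequences $S_1\supset S_2\supset\cdots$, and we take the diagonal subsequence whose $j$-th term is the $j$-th element of $S_j$. For each $k$, the tail of the diagonal from index $k$ onward lies in $S_k$, so all limits \eqref{b1} and \eqref{b3} hold along it.

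The main obstacle is index bookkeeping: the discarded initial indices (those $n$ where $w_{m,n}=w_m$) and the fact that the identity \eqref{b4} must hold for all $n$ and all $k$. For this reason, on the finitely many early indices of the diagonal sequence we define $w_{k,n}$ and $\Gamma_{k,n}$ directly by the recursion and allow $\|w_{k,n}\|_{Z_{k-1}}\ne1$ there. For instance, if $w_{m,n}=w_m$ we may set $\Gamma_{m+1,n}$ to an arbitrary positive small value and $w_{m+1,n}=0$, so that \eqref{b4} still holds. This is exactly why case \ref{d3} only requires the unit condition for $n\ge N_k$.

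Ratio and limit conditions are unaffected by finitely many terms. In case \ref{d2} the final condition $w_{K,n}=w_K$ holds for all $n$ because we passed to the subsequence where equality holds infinitely often.
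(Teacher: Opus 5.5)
Your proposal is correct and is essentially the standard argument behind \cite[Theorem 3.7]{HJ1}, which this paper only cites. You normalize the current remainder, use the compact embedding $Z_m\Subset Z_{m+1}$ to extract $w_{m+1}$, stop in Case \ref{d2} when $w_{m,n}=w_m$ infinitely often, and otherwise diagonalize. Your treatment of the finitely many exceptional diagonal indices, where you keep \eqref{b4} and positivity but drop the unit condition, is exactly why Case \ref{d3} requires \eqref{b2} only for $n\ge N_k$; with your construction one can take $N_k=k$.
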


\begin{defn}\label{refinex}
Let  $\mathcal Z=(Z_k)_{k=0}^\infty$ be as in Definition \ref{uexp} and $(v_n)_{n=1}^\infty$ be a sequence in $Z_0$.
\begin{enumerate}[label=\tnum]
    \item   We say $(v_n)_{n=1}^\infty$ has a \emph{relaxed expansion} if it satisfies the same conditions in Definition \ref{uexp} except that the unit vector condition \eqref{b2}  is removed, and, 
    \begin{equation*}
        w_K\ne 0\text{ in Case \ref{d2}.}
    \end{equation*}

    \item\label{uni}   We say $(v_n)_{n=1}^\infty$ has a \emph{unitary expansion} if it has a relaxed expansion and in Case \ref{d2}, respectively, Case \ref{d3}, it requires that
    \begin{equation}\label{wkone}
        \|w_k\|_{Z_k}=1\text{ for all $1\le k\le K$, respectively,  $k\ge 1$.}
    \end{equation}
    
\item\label{degenx}  We say $(v_n)_{n=1}^\infty$ has a \emph{degenerate expansion} if it satisfies the same conditions as an infinite unitary expansion in part \ref{uni} except that the requirement \eqref{wkone} is replaced with the following: either 
    \begin{enumerate}[label=\rnum]
        \item $w_k=0$ for all $k\ge1$, or 
        \item there exists an integer $N\ge 1$ such that
            \begin{equation}\label{wktwo}
                \|w_k\|_{Z_k}=1\text{ for all $1\le k\le N$, and $w_k=0$ for all $k>N$.}
            \end{equation}
    \end{enumerate} 
\end{enumerate}  
\end{defn}
We will use the same notation in \eqref{vsum} to denote all expansions introduced above  but will clearly specify their types when used.

\begin{defn}\label{defsimnorm}
    Let $(Z,\|\cdot\|_Z)$ be a normed space over $\mathbb C$ or $\mathbb R$ and $(v_n)_{n=1}^\infty$ be a sequence in $Z$. The strict expansion, resp., unitary expansion and degenerate expansion, with 
$$\mathcal Z=((Z_k,\|\cdot\|_{Z_k}))_{k=0}^\infty := ((Z,\|\cdot\|_Z))_{k=0}^\infty$$ is called the strict unitary expansion, resp., unitary expansion and degenerate expansion   in $Z$.
\end{defn}

In Definition \ref{defsimnorm},  each $Z_k=Z$, the norm on the remainder $w_{k,n}$ in \eqref{b2} is the same as the norm on the limit
$w_k$ in \eqref{wktwo}, hence the terminology {\it strict unitary expansion} is used as in \cite{FHJ}. Clearly, if \eqref{vsum} is a strict unitary or unitary expansion in $Z$, then $\|w_k\|_Z=1$ for all $k$.

Same as Remark \ref{remscl}(c), if $v_n$ has an asymptotic expansion of any type in Definitions \ref{refinex} and \ref{defsimnorm}, then any subsequence of $v_n$ has the same expansion (of the same type).

The main result on the existence of the above intrinsic expansions is the following.
\begin{thm}[{\cite[Corollary 3.12]{HJ1}}]\label{maincor}
Let $\mathcal Z=((Z_k,\|\cdot\|_{Z_k}))_{k=0}^\infty$  be as in Theorem \ref{mainlem}, and $(Z,\|\cdot\|)$ be a normed space over $\mathbb K$ such that
 all $Z_k\subset Z_\infty$ with continuous embeddings. 
Then any convergent sequence in $Z_0$ has a subsequence that possesses either a unitary expansion or a degenerate expansion in both 
\begin{enumerate}[label=\rnum]
    \item\label{caseq}  a subsequence $\widetilde{\mathcal Z}$  of $\mathcal Z$, and
    \item\label{casingle} space $Z_\infty$.
\end{enumerate}
\end{thm}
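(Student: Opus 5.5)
The plan is to obtain the statement from Theorem \ref{mainlem} by post-processing the strict expansion it gives. The processing removes the zero limit vectors and renormalizes the nonzero ones.

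\textbf{Step 1 (strict expansion).} Take a convergent sequence in $Z_0$. By Theorem \ref{mainlem}, a subsequence $(v_n)$ has a strict expansion $v_n\approx v+\sum_{k\in\mathcal N}\Gamma_{k,n}w_k$ in $\mathcal Z$. In Case \ref{d1} this is already a (trivial) unitary expansion in any space. In Case \ref{d2}, $w_K=w_{K,n}$ has norm $1$, so $w_K\neq 0$. However, earlier limit vectors $w_k$, $k<K$, and in Case \ref{d3} any $w_k$, may vanish. The only defect of a strict expansion relative to Definition \ref{refinex} is that the $w_k$ need not be unit vectors, and some may be zero.

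\textbf{Step 2 (removing zeros and renormalizing in a subsequence of spaces).} Let $k_1<k_2<\cdots$ enumerate the indices with $w_k\neq 0$.

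For each $j$, set
\[
\widetilde Z_j=Z_{k_j},\quad \widetilde Z_0=Z_0,\quad \widetilde w_j=\frac{w_{k_j}}{\|w_{k_j}\|_{Z_{k_j}}},\quad \widetilde\Gamma_{j,n}=\Gamma_{k_j,n}\|w_{k_j}\|_{Z_{k_j}} .
\]
The key observation is the following. If $w_k=0$ for $k_{j-1}<k<k_j$, then by \eqref{b4} the partial sums up to $k_{j-1}$ and up to $k_j-1$ coincide. Hence the remainder after $j-1$ retained terms equals $\Gamma_{k_{j-1}+1,n}w_{k_{j-1}+1,n}$. This lies in $Z_{k_{j-1}}=\widetilde Z_{j-1}$, which is exactly where Definition \ref{uexp} requires the new remainder to lie.

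Writing it as $\widetilde\Gamma_{j,n}\widetilde w_{j,n}$, we get $\widetilde w_{j,n}=\Gamma_{k_j,n}w_{k_j,n}/\widetilde\Gamma_{j,n}$, which converges to $\widetilde w_j$ in $Z_{k_j}$. The ratio condition \eqref{b1} holds because
\[
\frac{\widetilde\Gamma_{j+1,n}}{\widetilde\Gamma_{j,n}}=\text{const}\cdot\frac{\Gamma_{k_{j+1},n}}{\Gamma_{k_j,n}},
\]
and the second factor is a product of original ratios, each tending to $0$. The unit vector condition is no longer required, and $\|\widetilde w_j\|=1$.

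If there are infinitely many $k_j$, this gives an infinite unitary expansion. If there are finitely many ($N\ge1$) in Case \ref{d2}, then $k_N=K$ and we get a finite unitary expansion. In Case \ref{d3} with $N$ nonzero terms, or with none, we keep the retained terms and then append all remaining original indices $k>k_N$ with their spaces, coefficients and (zero) limits. This yields a degenerate expansion of type (b), or of type (a) when $N=0$. This proves part \ref{caseq}.

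\textbf{Step 3 (the single space $Z_\infty$).} Since every $Z_k\hookrightarrow Z_\infty$ continuously and injectively, all convergences $w_{k,n}\to w_k$ in $Z_k$ also hold in $Z_\infty$. In addition, $w_k\neq0$ in $Z_k$ if and only if $w_k\neq0$ in $Z_\infty$. We therefore repeat Step 2 with every space replaced by $Z_\infty$, normalizing by $\|w_{k_j}\|_{Z_\infty}$ instead; the ratio and limit conditions survive unchanged. The same subsequence serves both \ref{caseq} and \ref{casingle}.

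The main point needing care is the bookkeeping in Step 2: checking that the new remainders lie in the correct space $\widetilde Z_{j-1}$ when zero terms are skipped, and producing the degenerate tail in Case \ref{d3} so that it still satisfies \eqref{b1} and \eqref{b3}. I expect the rest to be routine.
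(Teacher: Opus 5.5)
Your proposal is correct. It takes the natural route for this corollary: obtain the strict expansion from Theorem \ref{mainlem}, delete the vanishing $w_k$ while passing to the spaces $Z_0,Z_{k_1},Z_{k_2},\ldots$, renormalize the surviving vectors, append the zero tail in Case \ref{d3}, and push everything into $Z_\infty$ through the continuous inclusions, so that one subsequence serves both conclusions. Your observation that the merged remainder $\Gamma_{k_j,n}w_{k_j,n}=\Gamma_{k_{j-1}+1,n}w_{k_{j-1}+1,n}$ lies in $Z_{k_{j-1}}$ is exactly what forces the passage to a subsequence $\widetilde{\mathcal Z}$; the paper gives no proof here and only cites \cite[Corollary 3.12]{HJ1}, so I cannot check your argument line by line against that source.
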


Elementary examples for $Z=\mathbb R$, $\mathbb C$ in \cite[subsection 4.3]{FHJ},
for $Z=H$ in \cite[Example 3.13]{HJ1}, and for $Z=V$ with a sequence of steady states of the NSE in \cite[Example 4.5]{HJ1} have expansions with some nonzero vectors $w_k$. In contrast, the next example provides an asymptotic expansion, 
for a sequence $v_n=P_n g$ where $g$ is fixed, with each $w_k=0$.

\begin{eg}\label{Pnex}
Let $(\alpha_k)_{k=0}^\infty$ be a strictly decreasing sequence of  positive numbers.
Suppose $g\in D(A^{\alpha_0})$ with $Q_n g\ne 0$ for all $n$. 

(a) For $k\ge 1$ and $n\ge 1$, let 
\begin{equation*}
\widetilde  \Gamma_{k,n}=\|Q_n g\|_{D(A^{\alpha_{k-1}})},\quad w_k=0,\quad \widetilde w_{k,n}=-Q_n g/\widetilde \Gamma_{k,n}.
\end{equation*} 
We have, for any $k\ge 1$ and $n\ge 1$, that
\begin{equation*}
P_n g =g-Q_ng=g+\widetilde \Gamma_{k,n}\widetilde w_{k,n}.
\end{equation*}
Note that
    $\widetilde \Gamma_{1,n}=\|Q_n g\|_{D(A^{\alpha_0})} \to 0$ as $n\to\infty$, and 
\begin{equation*} 
\frac{\widetilde \Gamma_{k+1,n}}{\widetilde \Gamma_{k,n}}=\frac{\|Q_n g\|_{D(A^{\alpha_k})}}{\|Q_n g\|_{D(A^{\alpha_{k-1}})}}
    \le \lambda_{n+1}^{\alpha_k-\alpha_{k-1}} \to 0\text{ as }n\to\infty.
\end{equation*}
Also,     $\|\widetilde w_{k,n}\|_{D(A^{\alpha_{k-1}})}=1$, and 
\begin{equation*}
   \|\widetilde w_{k,n}\|_{D(A^{\alpha_k})}=\frac{\|Q_n g\|_{D(A^{\alpha_k})}}{\|Q_n g\|_{D(A^{\alpha_{k-1}})}}
    \to 0\text{ as }n\to\infty,
\end{equation*}
that is $\widetilde w_{k,n}\to 0=w_k$ in $D(A^{\alpha_k})$.
Thus, we obtain the degenerate expansion 
\begin{equation}\label{degenPn}
     P_n g\approx g+\sum_{k=1}^\infty \widetilde \Gamma_{k,n}w_k= g+\sum_{k=1}^\infty \widetilde \Gamma_{k,n}\cdot 0\text{ in }\mathcal Z=(D(A^{\alpha_k}))_{k=0}^\infty.
 \end{equation}

(b) Let $\beta=\inf\{\alpha_k : k\in\mathbb N\}$ and $s$ be any number in the interval $[0,\beta]$.
Let  
$$\Gamma_{k,n}=\|Q_n g\|_{D(A^{\alpha_k})}, \quad  w_{k,n}=-Q_n g/\Gamma_{k,n}.$$
We have $P_n g =g+\Gamma_{k,n} w_{k,n}$. 
Same as part (a), we have $\Gamma_{1,n} \to 0$ and  
$\Gamma_{k+1,n}/\Gamma_{k,n}\to 0$ as $n\to\infty$, and 
\begin{equation*}
   \|w_{k,n}\|_{D(A^s)}=\frac{\|Q_n g\|_{D(A^s)}}{\|Q_n g\|_{D(A^{\alpha_k})}}\le \lambda_{n+1}^{s-\alpha_k}
    \to 0\text{ as }n\to\infty.
\end{equation*}
Then one has the degenerate expansion
\begin{equation}\label{degen3}
     P_n g\approx g+\sum_{k=1}^\infty \Gamma_{k,n}w_k= g+\sum_{k=1}^\infty \Gamma_{k,n}\cdot 0 \text{ in } D(A^s).
 \end{equation}
\end{eg}

The above degenerate expansions \eqref{degenPn} and \eqref{degen3}  hint at having all terms in \eqref{Galerkinu} expanded asymptotically in $n$. This is the main idea behind our approach.

As in \cite{FHJ} the following notion is needed to make our study rigorous when using the comparison in Definition \ref{deforder}.
Let $X$ be a set of sequences of positive numbers
and $(\varphi(n))_{n=1}^\infty$ be a subsequence of $(n)_{n=1}^\infty$.
Define
$$X_\varphi=\left \{ (\xi_{\varphi(n)})_{n=1}^\infty \text{ with } (\xi_n)_{n=1}^\infty \in X\right \}.$$ 
We call $X_\varphi$ a \emph{subsequential set} of $X$.
Note that if $X$ is totally comparable, then so is the susequential set $X_\varphi$.

\section{Stationary solutions} \label{steadysec}
Let $f\in H$. For $n\ge 1$, let $u_n$ be a solution of the steady state equation \eqref{steadyu}. 
First, we derive some estimates for these $u_n$.

\begin{lem}\label{R1}
Consider a real number $\alpha\ge 0$ in the  periodic case, and $\alpha=m/2$ for some integer $m\ge 0$ in the  no-slip case. Assume   $f\in D(A^\alpha)$. Then  there is a number  $M_\alpha>0$ depending on $|A^\alpha f|$ such that
\begin{equation}\label{regest}
\|u_n\|_{D(A^{\alpha+1})}\le M_\alpha\quad\forall \ n\in\mathbb N.
\end{equation}    
\end{lem}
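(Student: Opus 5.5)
We argue by induction on the regularity level, always testing \eqref{steadyu} against powers of $A$ applied to $u_n$. This is legitimate because $u_n\in P_nH$ is a finite combination of eigenfunctions, so every $A^\gamma u_n$ lies in $P_nH$. Moreover $P_n$ is self-adjoint and commutes with $A^\gamma$, so $\langle P_n g, A^\gamma u_n\rangle=\langle g,A^\gamma u_n\rangle$.

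\textbf{Base level.} Take the inner product of \eqref{steadyu} with $u_n$ and use \eqref{Bflip}. This gives $\|u_n\|^2=\langle f,u_n\rangle\le |f|\,|u_n|\le |f|\,\|u_n\|$, so $\|u_n\|\le |f|$.

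\textbf{Bound in $D(A)$ (the main obstacle).} In three dimensions a bound on $|Au_n|$ does not follow from the $V$-bound and small data, so this step must be handled separately. We do not rely on a general steady-state regularity theorem; instead we use the Galerkin structure. In \eqref{steadyu} the equation reads $Au_n=P_nf-P_nB(u_n,u_n)$, so
\[
|Au_n|\le |f|+|B(u_n,u_n)|.
\]
The standard estimate $|B(u,u)|\le c\|u\|^{3/2}|Au|^{1/2}$ then gives
\[
|Au_n|\le |f|+c\|u_n\|^{3/2}|Au_n|^{1/2}.
\]
By Young's inequality, $|Au_n|\le 2|f|+c^2\|u_n\|^3\le 2|f|+c^2|f|^3$. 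This inequality holds even without smallness because the exponent $1/2<1$; it is exactly what the finite-dimensional Galerkin setting buys us. This settles $\alpha=0$.

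\textbf{Periodic case, induction for $\alpha>0$.} From \eqref{steadyu} we have $|A^{\alpha+1}u_n|\le |A^\alpha f|+|A^\alpha B(u_n,u_n)|$. Applying \eqref{Aau},
\[
|A^{\alpha+1}u_n|\le |A^\alpha f|+2K_\alpha\|u_n\|^{1/2}|Au_n|^{1/2}|A^{\alpha+1/2}u_n|.
\]
The factors $\|u_n\|$ and $|Au_n|$ are already bounded. For $|A^{\alpha+1/2}u_n|$ we interpolate:
\[
|A^{\alpha+1/2}u_n|\le |Au_n|^{\theta}|A^{\alpha+1}u_n|^{1-\theta}\quad\text{with } \theta=\tfrac{1}{2\alpha}\text{ if }\alpha\ge 1/2.
\]
If $\alpha<1/2$, then $|A^{\alpha+1/2}u_n|\le|Au_n|$ directly. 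Since the exponent $1-\theta<1$, Young's inequality absorbs the term and yields $M_\alpha$, depending on $|A^\alpha f|$ (and $|f|\le|A^\alpha f|$). Alternatively, one can step up by increments of $1/2$ from the previous level, using $|A^{\alpha+1/2}u_n|\le M_{\alpha-1/2}$.

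\textbf{No-slip case, $\alpha=m/2$ with $m\ge1$.} Use \eqref{AjB} instead:
\[
|A^{m/2+1}u_n|\le |A^{m/2}f|+\bar K_m|A^{(m+1)/2}u_n|^2.
\]
By the induction hypothesis at level $(m-1)/2$, $|A^{(m+1)/2}u_n|=|A^{(m-1)/2+1}u_n|\le M_{(m-1)/2}$, which closes the induction. Note that $P_n$ commutes with $A^{m/2}$ because the $\varphi_j$ are eigenfunctions, and $|A^{m/2}P_nf|\le|A^{m/2}f|$. Finally, $M_\alpha$ depends only on $|A^\alpha f|$, since all lower norms of $f$ are dominated by it (because $\lambda_1=1$).
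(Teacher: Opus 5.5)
Your proof is correct given the preliminary inequalities stated in the paper, and it is essentially the paper's argument. You use the same $V$-bound from \eqref{Bflip} and the same Agmon-based $D(A)$ bound; you take the $H$-norm of $Au_n=P_nf-P_nB(u_n,u_n)$ where the paper tests the equation with $Au_n$ (and with $A^{2\beta}u_n$ at higher levels), which is equivalent. You then run the same bootstrap through \eqref{Aau} in the periodic case and \eqref{AjB} in the no-slip case; your interpolation $|A^{\alpha+1/2}u_n|\le |Au_n|^{1/(2\alpha)}|A^{\alpha+1}u_n|^{1-1/(2\alpha)}$ (for $\alpha\ge 1/2$) only collapses the paper's half-step iteration \eqref{betaboot} into a single Young absorption.

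Be aware, though, that the no-slip step for $m\ge 1$ is only as sound as \eqref{AjB} itself, exactly as in the paper. The bound $|A^{m/2}P_nB(u_n,u_n)|\le |A^{m/2}B(u_n,u_n)|$ tacitly needs $B(u_n,u_n)\in D(A^{m/2})\subset V$, i.e.\ a zero boundary trace. The Leray projection of $(u_n\cdot\nabla)u_n$ does not provide that in general.
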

\begin{proof}
Using \eqref{Bflip} and Cauchy's inequality leads to 
\begin{equation}\label{Vbound}
    \|u_n\|\le |f|.
\end{equation}

Taking the inner product of equation \eqref{steadyu} with $Au_n$ in $H$ gives
\begin{equation}\label{Aueq}
    |Au_n|^2=-\langle P_nB(u_n,u_n),Au_n\rangle+\langle P_nf, Au_n\rangle
    =-\langle (u_n\cdot\nabla) u_n,Au_n\rangle+\langle f, Au_n\rangle.
\end{equation}
From  H\"older's inequality with $L^\infty,L^3,L^2$, Agmon's inequality $\|u_n\|_{L^\infty} \le c_1 \|u_n\|_{H^1}^{1/2}\|u_n\|_{H^2}^{1/2}$ and \eqref{AHm}, 
one obtains
\begin{align*}
    |\langle (u_n\cdot\nabla) u_n,Au_n\rangle|
    &\le \|u_n\|_{L^\infty} \|\nabla u_n\|_{L^2} |Au_n| 
    \le c_2 (\|u_n\|^{1/2} |Au_n|^{1/2}) \|u_n\|\cdot |Au_n|\\
    &= c_2 \|u_n\|^{3/2} |Au_n|^{3/2}.
\end{align*}
Combining this with \eqref{Aueq} and Young's inequality gives
\begin{align*}
    |Au_n|^2\le c_2 \|u_n\|^{3/2} |Au_n|^{3/2} +|f||Au_n|
    &\le \frac{1}{2}|Au_n|^2+c_3(\|u_n\|^6 +|f|^2).
\end{align*}
Thus, together with \eqref{Vbound}, it follows that
\begin{equation}\label{H2est}
    |Au_n|^2\le 2c_3(\|u_n\|^6 +|f|^2)\le M_0:=2c_3(|f|^6+|f|^2).
\end{equation}
This proves \eqref{regest} for $\alpha=0$. 

We prove for the case $\alpha>0$ below by iterations. Let  $\beta\ge 1/2$.
Taking the inner product of equation \eqref{steadyu} with $A^{2\beta}u_n$ in $H$ gives
\begin{align*}
    |A^{\beta+1/2}u_n|^2
    &=\langle Au_n,A^{2\beta}u_n\rangle
    = -\langle B(u_n,u_n),A^{2\beta}u_n\rangle+\langle f,A^{2\beta} u_n\rangle \\
    &= -\langle A^{\beta-1/2} B(u_n,u_n),A^{\beta+1/2}u_n\rangle+\langle A^{\beta-1/2} f,A^{\beta+1/2} u_n\rangle.
\end{align*}
By applying the Cauchy--Schwarz and Cauchy inequalities, we obtain 
\begin{equation}\label{inest}
    |A^{\beta+1/2}u_n|^2
    \le |A^{\beta-1/2} B(u_n,u_n)|\, |A^{\beta+1/2}u_n|+ \frac14 |A^{\beta+1/2}u_n|^2+|A^{\beta-1/2} f|^2.
\end{equation}

\medskip\noindent
\textit{The periodic case.}  
Applying Ineq.~\eqref{Aau} to $\alpha:=\beta-1/2$ and then using Cauchy's inequality yield
\begin{align*}
|A^{\beta-1/2} B(u_n,u_n)|\, |A^{\beta+1/2}u_n|
&\le (2K_{\beta-1/2} \|u_n\|^{1/2}|Au_n|^{1/2}|A^\beta u_n|)\cdot |A^{\beta+1/2}u_n|\\
&\le \frac14 |A^{\beta+1/2}u_n|^2 + 4K_{\beta-1/2}^2\|u_n\|\cdot |Au_n|\cdot |A^\beta u_n|^2.
\end{align*}
Combining this with \eqref{inest} and then  \eqref{Vbound}, \eqref{H2est} yields 
\begin{equation}\label{betaboot}
    |A^{\beta+1/2}u_n|^2\le 8K_{\beta-1/2}^2\|u_n\|\cdot |Au_n|\cdot |A^\beta u_n|^2 +2|A^{\beta-1/2} f|^2
    \le \bar C_\beta |A^\beta u_n|^2 +2|A^{\beta-1/2} f|^2,
\end{equation}
where $\bar C_\beta=8K_{\beta-1/2}^2 \hilite{M_0^{1/2}}|f|$.

Now let $\alpha\in(0,\infty)$. Then there are $\bar \beta\in(1/2,1]$ and $k\in\mathbb N$ such that $\alpha+1=\bar\beta+k/2$. We apply estimate \eqref{betaboot} consecutively  to $\beta:=\bar\beta+j/2$ for $j=0,1,\ldots,k-1$, and also use \eqref{H2est} to estimate $|A^{\bar \beta}u_n|$ for the initial application. 
These result in estimate \eqref{regest}.

\medskip\noindent
\textit{The no-slip case.} 
For $j\ge 2$, applying inequality \eqref{AjB} and then Cauchy's inequality yields
\begin{align*}
|A^{j/2-1/2} B(u_n,u_n)|\, |A^{j/2+1/2}u_n|
    &\le \bar K_{j-1} |A^{j/2}u_n|^2 \cdot |A^{j/2+1/2}u_n|\\
   & \le \frac14 |A^{j/2+1/2}u_n|^2 +  \bar K_{j-1}^2 |A^{j/2}u_n|^4.
\end{align*}
Combining this inequality with \eqref{inest} for $\beta=j/2$ gives
\begin{equation}\label{Ajboot}
|A^{(j+1)/2}u_n|^2\le 2\bar K_{j-1}^2 |A^{j/2}u_n|^4 +2|A^{(j-1)/2} f|^2.
\end{equation}

Consider $\alpha=m/2$ with $m\in\mathbb N$. By applying estimate \eqref{Ajboot} to $j=2,3,\ldots,m,m+1$ and using the estimate \eqref{H2est} to start with, we obtain \eqref{regest}.
\end{proof}

\subsection{Existence of the intrinsic expansions}\label{Gaexpand}
Let $(\varphi(n))_{n=1}^\infty$ be a subsequence of $(n)_{n=1}^\infty$.
For $n\ge 1$, let $v_n=u_{\varphi(n)}$, that is, $v_n \in P_{\varphi(n)}H$ satisfies
\begin{equation}\label{steady}
Av_n+ P_{\varphi(n)}B(v_n,v_n)=P_{\varphi(n)}f.
\end{equation}

\begin{thm}\label{E1}
Consider a real number $\alpha_*\ge 0$ in the  periodic case, or $\alpha_*=m/2$ for some nonnegative integer $m$ in the  no-slip case.
Assume   $f\in D(A^{\alpha_*})\setminus\{0\}$.
\begin{enumerate}[label=\tnum]
    \item\label{E1i} Given any strictly decreasing sequence $(s_k)_{k=0}^\infty$ of numbers in the interval $(0,\alpha_*+1)$, set $Z_k=D(A^{s_k})$ and $\mathcal Z=(Z_k)_{k=0}^\infty$.
Then there is a subsequence of $(v_n)_{n=1}^\infty$ that has a strict expansion in $\mathcal Z$.    

\item\label{E1ii} Let $s$ be any number in the interval $[0,\alpha_*+1)$.
Then there is a subsequence of $(v_n)_{n=1}^\infty$ that has a unitary expansion or degenerate expansion in $D(A^s)$.    
\end{enumerate}
\end{thm}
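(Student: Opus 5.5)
The plan is to reduce both parts to the abstract results, Theorem \ref{mainlem} and Theorem \ref{maincor}. For that we need a subsequence of $(v_n)_{n=1}^\infty$ that converges in a space $Z_0$ with compact embeddings into the later spaces of the chain. The key input is the uniform bound of Lemma \ref{R1}: since $f\in D(A^{\alpha_*})$, we have $\|v_n\|_{D(A^{\alpha_*+1})}=\|u_{\varphi(n)}\|_{D(A^{\alpha_*+1})}\le M_{\alpha_*}$ for all $n$. The hypotheses on $\alpha_*$ (arbitrary real $\alpha_*\ge0$ in the periodic case, $\alpha_*=m/2$ in the no-slip case) are exactly those of Lemma \ref{R1}.

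For part \ref{E1i}, fix the strictly decreasing sequence $s_k\in(0,\alpha_*+1)$. Since $s_0<\alpha_*+1$, the compact embedding \eqref{comp0} of $D(A^{\alpha_*+1})$ into $D(A^{s_0})$ applies to the bounded sequence $(v_n)$. So there is a subsequence converging in $Z_0=D(A^{s_0})$ to some $v$.

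Next we check the nested structure \eqref{nested}. Because $s_k$ is strictly decreasing, $Z_k=D(A^{s_k})\subset D(A^{s_{k+1}})=Z_{k+1}$, and by \eqref{comp0} each of these embeddings is compact. Theorem \ref{mainlem} then yields a further subsequence with a strict expansion in $\mathcal Z$. This proves \ref{E1i}.

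For part \ref{E1ii}, fix $s\in[0,\alpha_*+1)$ and choose a strictly decreasing sequence $s_k\in(s,\alpha_*+1)$, for instance $s_k=s+(\alpha_*+1-s)/(k+2)$. Set $Z_\infty=D(A^s)$. Every $Z_k=D(A^{s_k})$ embeds continuously into $Z_\infty$, since $s_k>s$ and $\lambda_1=1$ gives $\|u\|_{D(A^s)}\le\|u\|_{D(A^{s_k})}$. As in part \ref{E1i}, a subsequence converges in $Z_0$. Theorem \ref{maincor}\ref{casingle} then gives a further subsequence with a unitary or degenerate expansion in $D(A^s)$, as required.

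There is no serious obstacle, since the analytic work was done in Lemma \ref{R1}. The only points needing care are bookkeeping ones:
\begin{enumerate}[label=\rnum]
\item The interval endpoints must be strict: $s_0<\alpha_*+1$ in part \ref{E1i} and $s<\alpha_*+1$ in part \ref{E1ii}. This strictness is what makes the embeddings compact.
\item The limit $v$ lies in $Z_0$, which the definition of a strict expansion requires.
\item The hypothesis $f\neq0$ is not actually needed for existence; it matters only later in the paper.
\end{enumerate}
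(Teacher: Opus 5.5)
Your proposal is correct and follows essentially the same route as the paper: the uniform bound of Lemma~\ref{R1} in $D(A^{\alpha_*+1})$ and the compact embedding \eqref{comp0} give a subsequence convergent in $Z_0$, after which Theorem~\ref{mainlem} yields part (i), and Theorem~\ref{maincor}(b) with $Z_\infty=D(A^s)$ and $s_k>s$ yields part (ii). Your side remarks are also accurate: the endpoints must be strict, your explicit choice of $s_k$ works, and the hypothesis $f\neq 0$ plays no role in this proof.
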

\begin{proof}
(i) Let $Z_*=D(A^{\alpha_*+1})$. By the estimate \eqref{regest} for $\alpha=\alpha_*$, the sequence $v_n$ is bounded in $Z_*$. By the compact embedding of $Z_*$ into $Z_0$, which is due to \eqref{comp0}, there is a subsequence of $(v_n)_{n=1}^\infty$, still denoted by $(v_n)_{n=1}^\infty$, such that $v_n\to v$  in $Z_0$.
Using \eqref{comp0} again to have the compact embedding $Z_k\Subset Z_{k+1}$, we apply Theorem \ref{mainlem}.

(ii) Choose $(s_k)_{k=0}^\infty$ as in part (i) with $s_k>s$ for all $k\ge 0$.
After having a subsequence of $(v_n)_{n=1}^\infty$, still denoted by $(v_n)_{n=1}^\infty$, with $v_n\to v$ in $Z_0$, we apply Theorem \ref{maincor}\ref{casingle} to $Z_\infty=D(A^s)$.
\end{proof}

\subsection{Properties of the  expansions}\label{more}
Consider a real number $\alpha_*> 0$ in the  periodic case, or $\alpha_*=m/2$ for some $m\in\mathbb N$ in the  no-slip case. Assume   $f\in D(A^{\alpha_*})\setminus\{0\}$.  
For the remainder of this section, let $\beta$ be a fixed number in $[0,\alpha_*)$, which in the no-slip case is additionally assumed  to be half of an integer.

By applying Theorem \ref{E1}\ref{E1ii} to $s=1+\beta$, we obtain a unitary or degenerate expansion for a subsequence of $v_n$, but here still denoted by $v_n$,
\begin{equation}\label{Vx}
v_n\approx v+\sum_{k\in\mathcal N} \Gamma_{k,n}w_k \text{ in } D(A^{1+\beta}).  
\end{equation}
Taking the limit in $H$ of equation \eqref{steady} and using the continuity of $A:D(A)\to H$ and $B:D(A)\times D(A)\to H$ we obtain 
\begin{equation}\label{e0}
Av+B(v,v)=f,
\end{equation}
that is, $v$ is a steady state solution.
Next, we present in subsections \ref{met1} and \ref{met2} two ways one can proceed to find more information about the asymptotic expansion \eqref{Vx}.

Since we are not interested in the trivial case $v_n=v$ for all $n$,  we will focus on the situation $ v_n \ne v$  for all $n$ for the rest of this section.

\subsubsection{First method: Direct expansion for the nonlinear terms}\label{met1}
By the virtue of Lemma \ref{R1}, the sequence $v_n$ is bounded in $D(A^{\alpha_*+1})$, hence $P_{\varphi(n)}B(v_n,v_n)$ is bounded in $D(A^{\alpha_*})$. Same as Theorem \ref{E1}\ref{E1ii}, we have, after using a subsequence, a unitary or degenerate expansion
\begin{equation}\label{PnBex}
    P_{\varphi(n)}B(v_n,v_n)\approx  b_0+\sum_{k\in \mathcal N_1} \rho_{k,n} b_k \text{ in }D(A^\beta).
\end{equation} 
Note that the above expansion \eqref{PnBex} of $P_{\varphi(n)}B(u_n,u_n)$ does not use the asymptotic expansion \eqref{Vx} of  $v_n$.

Thanks to \eqref{vlim}, we have
\begin{equation}\label{Pb0}
     P_{\varphi(n)}B(v_n,v_n)\to b_0 \text{ in $D(A^\beta)$ as $n\to\infty$.}
\end{equation}
Because $v_n\to v$ in $D(A^{1+\beta})$ we have $Av_n\to Av$ in $D(A^\beta)$ and $P_{\varphi(n)}B(v_n,v_n)\to B(v,v)$ in $D(A^\beta)$.
Combining the last property with \eqref{Pb0} gives 
\begin{equation}\label{b0uu}
    b_0=B(v,v).
\end{equation}

\hilite{To derive the relations between the vectors of the expansions \eqref{Vx} and \eqref{PnBex}, we ``substitute" their finite approximations into the equation \eqref{steady}. 
After some simplifications, the resulting equations such as \eqref{releq3} and \eqref{releq4} will contain certain coefficient sequences. There is a need to compare all of them. This is done by selecting an appropriate subsesequence which results in technical requirements in \eqref{Stotal} and Assumptions \ref{chiassum} and \ref{newSa} below.}

Consider the case $Q_nf\ne 0$ for all $n$ first.
For $n\ge 1$, define $G_n=\|Q_{\varphi(n)}f\|_{D(A^\beta)}$. Note that
\begin{equation}\label{Gnrate}
    G_n\le \lambda_{\varphi(n)+1}^{\beta-\alpha_*} \|f\|_{D(A^{\alpha_*})}=\mathcal O(\lambda_{\varphi(n)+1}^{\beta-\alpha_*})\text{ as }n\to\infty.
\end{equation}
Let 
\begin{equation}\label{Sdef}
\mathcal S=\{(\Gamma_{j,n})_{n=1}^\infty, (\rho_{k,n})_{n=1}^\infty,(G_n)_{n=1}^\infty:j\in\mathcal N,k\in \mathcal N_1\}.
\end{equation}
Thanks to the fact $\mathcal S$ is countable and 
 \cite[Lemma 3.2]{FHJ}, $\mathcal S$ has a subsequential set that is totally comparable.
Together with \hilite{property (c) of Remark \ref{remscl}}, we assume 
\begin{equation}\label{Stotal}
    \text{ the set $\mathcal S$ itself  is totally comparable.}
\end{equation}

\begin{assum}\label{chiassum}
In the case $Q_nf\ne 0$ for all $n$, and $b_1$ in \eqref{PnBex} exists with $\Gamma_{1,n}\sim \rho_{1,n}$, we define 
\begin{equation}\label{xn}
  \mu_0=\lim_{n\to\infty} \frac{\rho_{1,n}}{\Gamma_{1,n}}\in(0,\infty)\text{ and set }\chi_n=\rho_{1,n}/\Gamma_{1,n}-\mu_0.
\end{equation}
Assume that the sequence $(\chi_n)_{n=1}^\infty$ satisfies
either
\begin{enumerate}
\item [{\rm (S1)}] $\chi_n=0$ for all $n$, or

\item [{\rm (S2)}] ($\chi_n>0$ for all $n$) or ($\chi_n<0$ for all $n$). In addition, whenever $w_2$ and $b_2$ exist, the set 
\begin{equation*}
    \{  (\Gamma_{2,n})_{n=1}^\infty, (\rho_{2,n})_{n=1}^\infty,  (\Gamma_{1,n}|\chi_n|)_{n=1}^\infty, (G_n)_{n=1}^\infty\} \text{ is totally comparable.} 
\end{equation*}
\end{enumerate}
\end{assum}

By the virtue of \cite[Lemma A.4]{FHJ} and using a subsequential set, Assumption \ref{chiassum} can always be met.

\begin{thm}\label{EEE} Assume that $v_n \ne v$ and $Q_nf\ne 0$ for all $n$.

\noindent Case 1: ($b_1$ in \eqref{PnBex} exists) and 
\begin{equation}\label{as1}
    (\Gamma_{1,n}\succ G_n \text { or } \rho_{1,n}\succ G_n).
\end{equation}
\begin{enumerate}[label=\tnum]
    \item Case $\Gamma_{1,n}\sim \rho_{1,n}$.  Then one has 
        \begin{equation}\label{sharea}
    Aw_1+\mu_0 b_1=0\text{ with }  \mu_0=\lim_{n\to\infty} \frac{\rho_{1,n}}{\Gamma_{1,n}}>0.
    \end{equation}
 Set $\chi_n=\rho_{1,n}/\Gamma_{1,n}-\mu_0$. Let $w_2$, $b_2$ exist and Assumption \ref{chiassum} hold.
 \begin{enumerate}[label=\rnum]
     \item If 
     \begin{equation}\label{ggx}
        \text{  $\Gamma_{2,n}\succ G_n $ or $\rho_{2,n} \succ G_n$ or, in the case $\chi_n\ne 0$,  $\Gamma_{1,n}|\chi_n|\succ G_n $, }
            \end{equation}
then there are $\mu_1,\mu_2,\mu_3\in\mathbb R$ with at least  one of them being $1$ and ($\mu_1\mu_3>0$ or $\mu_1=\mu_3=0$), such that
\begin{equation}\label{share2a}
    \mu_1 Aw_2+\mu_2 b_1 +\mu_3 b_2=0.
\end{equation}
    \item Otherwise, we have 
    \begin{equation}\label{vvGam}
        \|v_n-v-\Gamma_{1,n}w_1\|_{D(A^{1+\beta})}=\mathcal O(\lambda_{\varphi(n)+1}^{\beta-\alpha_*})\text{ as }n\to\infty.
    \end{equation}
 \end{enumerate}

\item Case $\Gamma_{1,n}\succ \rho_{1,n}$. Then $w_1=0$ and one has the degenerate expansion
\begin{equation}\label{vndeg}
    v_n\approx v +\sum_{k\in\mathbb N}\Gamma_{k,n}\cdot 0 \text{ in }D(A^{1+\beta}).
\end{equation}

\item Case $\rho_{1,n}\succ \Gamma_{1,n}$. Then $b_1=0$ and  one has the degenerate expansion
\begin{equation}\label{Bvndeg}
    P_{\varphi(n)}B(v_n,v_n)\approx B(v,v) +\sum_{k\in\mathbb N}\rho_{k,n}\cdot 0 \text{ in }D(A^\beta).
\end{equation}
\end{enumerate}

\noindent Case 2:  $b_1$ in \eqref{PnBex} does not exist, or otherwise, 
($G_n\succsim \Gamma_{1,n}$ and $G_n\succsim \rho_{1,n}$). 
Then 
\begin{equation}\label{vnrate}
    \|v_n-v\|_{D(A^{1+\beta})}=\mathcal O(\lambda_{\varphi(n)+1}^{\beta-\alpha_*})\text{ as }n\to\infty.
\end{equation}
\end{thm}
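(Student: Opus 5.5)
We plan to substitute the two expansions \eqref{Vx} and \eqref{PnBex} into the Galerkin equation \eqref{steady} and subtract the limit equation \eqref{e0}, using \eqref{b0uu}. Writing $P_{\varphi(n)}f=f-Q_{\varphi(n)}f$ and using the remainder forms $v_n=v+\Gamma_{1,n}w_{1,n}$ and $P_{\varphi(n)}B(v_n,v_n)=B(v,v)+\rho_{1,n}b_{1,n}$, with $w_{1,n}\to w_1$ in $D(A^{1+\beta})$ and $b_{1,n}\to b_1$ in $D(A^\beta)$, we obtain the basic identity
\begin{equation*}
\Gamma_{1,n}Aw_{1,n}+\rho_{1,n}b_{1,n}=-Q_{\varphi(n)}f \quad\text{in } D(A^\beta),
\end{equation*}
where $\|Q_{\varphi(n)}f\|_{D(A^\beta)}=G_n$. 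Since $A:D(A^{1+\beta})\to D(A^\beta)$ is an isometry, $|A^\beta Aw_{1,n}|=\|w_{1,n}\|_{D(A^{1+\beta})}$. Every case then follows by dividing this identity by the dominant sequence among $\Gamma_{1,n},\rho_{1,n},G_n$, which is well defined because \eqref{Stotal} holds, and passing to the limit.

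For Case 2, if $b_1$ does not exist then $P_{\varphi(n)}B(v_n,v_n)=B(v,v)$, so $\Gamma_{1,n}Aw_{1,n}=-Q_{\varphi(n)}f$ and $\|v_n-v\|_{D(A^{1+\beta})}=G_n$; \eqref{Gnrate} then gives \eqref{vnrate}. If instead $G_n\succsim\Gamma_{1,n},\rho_{1,n}$, then $\|v_n-v\|_{D(A^{1+\beta})}=\Gamma_{1,n}\|w_{1,n}\|\le CG_n$, since $\|w_{1,n}\|$ is bounded, and \eqref{Gnrate} again applies.

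For Case 1, assumption \eqref{as1} makes $G_n$ negligible relative to $\max(\Gamma_{1,n},\rho_{1,n})$.
\begin{enumerate}[label=\tnum]
\item If $\Gamma_{1,n}\sim\rho_{1,n}$, neither dominates $G_n$ alone, but $\Gamma_{1,n}\sim\rho_{1,n}$ together with \eqref{as1} gives $G_n/\Gamma_{1,n}\to0$. Dividing by $\Gamma_{1,n}$ yields \eqref{sharea}.
\item If $\Gamma_{1,n}\succ\rho_{1,n}$, dividing by $\Gamma_{1,n}$ gives $Aw_1=0$, so $w_1=0$. A unitary expansion has $\|w_1\|=1$, so the expansion must be degenerate with $w_k=0$ for all $k$, which is \eqref{vndeg}.
\item If $\rho_{1,n}\succ\Gamma_{1,n}$, the same argument gives $b_1=0$ and \eqref{Bvndeg}.
\end{enumerate}

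The main obstacle is the second-order statement in Case 1(i). In the identity we expand one level further, $v_n=v+\Gamma_{1,n}w_1+\Gamma_{2,n}w_{2,n}$ and $P_{\varphi(n)}B=B(v,v)+\rho_{1,n}b_1+\rho_{2,n}b_{2,n}$, and write $\rho_{1,n}=\Gamma_{1,n}(\mu_0+\chi_n)$. Using \eqref{sharea} to cancel the leading terms gives
\begin{equation*}
\Gamma_{2,n}Aw_{2,n}+\Gamma_{1,n}\chi_n b_1+\rho_{2,n}b_{2,n}=-Q_{\varphi(n)}f .
\end{equation*}
Under Assumption \ref{chiassum} the four sequences $\Gamma_{2,n}$, $\Gamma_{1,n}|\chi_n|$, $\rho_{2,n}$, $G_n$ are totally comparable; in case (S1) the middle term is absent. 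If \eqref{ggx} holds, we divide by the largest of the first three. The limit coefficients give $\mu_1,\mu_2,\mu_3$, one of them equal to $1$ and $\mu_2$ carrying the sign of $\chi_n$, and this yields \eqref{share2a}. The sign condition $\mu_1\mu_3>0$ or $\mu_1=\mu_3=0$ holds because both ratios of positive sequences converge to nonnegative limits. It remains to check that $\mu_1=0$ with $\mu_3\ne0$ cannot occur. If it did, then $\rho_{2,n}\succ\Gamma_{2,n}$ would force $b_2$ to be the zero vector of a degenerate expansion, making the $b_2$ term vanish; we then redivide so that $\mu_3=0$ as well. Carrying out this bookkeeping across the subcases is the delicate part. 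If \eqref{ggx} fails, $G_n$ dominates, and $\|v_n-v-\Gamma_{1,n}w_1\|_{D(A^{1+\beta})}=\Gamma_{2,n}\|w_{2,n}\|=\mathcal O(G_n)$, which gives \eqref{vvGam} via \eqref{Gnrate}.
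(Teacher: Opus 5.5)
The verdict is a genuine gap, in the final sign-bookkeeping step only. Everything up to that step is correct and is the paper's proof:
\begin{itemize}
\item the identity $\Gamma_{1,n}Aw_{1,n}+\rho_{1,n}b_{1,n}=-Q_{\varphi(n)}f$, divided by the dominant scale for Case~1(i)--(iii);
\item the second-order identity \eqref{releq4}, obtained from $\rho_{1,n}=\Gamma_{1,n}(\mu_0+\chi_n)$ and \eqref{sharea};
\item division by the largest $\xi_n$ among $\Gamma_{2,n}$, $\rho_{2,n}$, $\Gamma_{1,n}|\chi_n|$ (by $\xi_n\,\mathrm{sign}(\chi_n)$ in the last subcase, so that $\mu_2=1$ rather than $-1$);
\item the $\mathcal O(G_n)$ bounds combined with \eqref{Gnrate} for Case~1(i)(b) and Case~2.
\end{itemize}
The paper's proof stops right after the division. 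The only sign information it yields is that $\mu_1,\mu_3$ are the limits of $\Gamma_{2,n}/\xi_n$ and $\rho_{2,n}/\xi_n$, up to the common factor $\mathrm{sign}(\chi_n)$. That is, $\mu_1\mu_3\ge 0$.

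The step that fails is your extra bookkeeping to exclude $\mu_1=0\ne\mu_3$, for three reasons.

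First, $\rho_{2,n}\succ\Gamma_{2,n}$ does not force $b_2=0$. Dividing \eqref{releq4} by $\rho_{2,n}$ is legitimate here, since $\rho_{2,n}\sim\xi_n\succ G_n$. It gives $b_2=-c\,b_1$ with $c=\lim_{n\to\infty}\Gamma_{1,n}\chi_n/\rho_{2,n}$, and $c\ne 0$ whenever $\Gamma_{1,n}|\chi_n|\sim\rho_{2,n}$. Nothing in the definition of a unitary expansion forbids $b_2=\pm b_1$. Already in $\mathbb R$, $\tfrac1n+\tfrac1{n^2}=\tfrac1n\cdot 1+\tfrac1{n^2}\cdot 1$ is a unitary expansion with $w_2=w_1$. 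So $b_2\neq 0$ as soon as $w_1\neq 0$, because $b_1=-Aw_1/\mu_0$.

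Second, even when $b_2=0$ (when $\rho_{2,n}$ strictly dominates $\Gamma_{2,n}$ and $\Gamma_{1,n}|\chi_n|$), you cannot ``redivide''. You only know $\rho_{2,n}b_{2,n}=o(\rho_{2,n})$, so its quotient by $\max(\Gamma_{2,n},\Gamma_{1,n}|\chi_n|)$ need not converge. That smaller scale also need not dominate $G_n$, since \eqref{ggx} may hold only through $\rho_{2,n}\succ G_n$. What you actually obtain is $b_2=0$, i.e.\ $(\mu_1,\mu_2,\mu_3)=(0,0,1)$.

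Third, the mirror configuration $\mu_1\ne 0=\mu_3$ is not treated at all. An example is $\Gamma_{2,n}\succ\rho_{2,n}$ with $\Gamma_{2,n}\succsim\Gamma_{1,n}|\chi_n|$, which yields $Aw_2+\mu_2 b_1=0$. This also violates ``$\mu_1\mu_3>0$ or $\mu_1=\mu_3=0$'', and \eqref{releq4} carries no information that would produce a relation with $\mu_3\neq 0$ there.

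So drop this step rather than try to repair it. The division argument, yours and the paper's, proves \eqref{share2a} with one coefficient equal to $1$ and $\mu_1\mu_3\ge 0$. Configurations in which exactly one of $\mu_1,\mu_3$ vanishes cannot be excluded by it.
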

\begin{proof}
Consider Case 1. We have from \eqref{Vx}, \eqref{PnBex} and \eqref{b0uu} that
\begin{equation}\label{vbn}
    v_n=v+\Gamma_{1,n}w_{1,n}\text{ and } P_{\varphi(n)}B(v,v)=B(v,v)+\rho_{1,n}b_{1,n}
\end{equation}
with $w_{1,n}\to w_1$ in $D(A^{1+\beta})$ and $b_{1,n}\to b_1$ in $D(A^\beta)$ as $n\to\infty$.
Substituting the identities in \eqref{vbn} into \eqref{steady} gives
 \begin{equation*}
 Av+\Gamma_{1,n}Aw_{1,n}+ B(v,v)+\rho_{1,n}b_{1,n}=f-Q_{\varphi(n)}f,
 \end{equation*}
which, together with \eqref{e0}, implies
\begin{equation}
 \Gamma_{1,n}Aw_{1,n}+\rho_{1,n}b_{1,n}=-Q_{\varphi(n)}f. \label{releq3}   
\end{equation}

(i) Note from \eqref{as1} and the fact $\Gamma_{1,n}\sim \rho_{1,n}$ that 
\begin{equation}\label{qq}
 \frac{\|Q_{\varphi(n)}f\|_{D(A^\beta)}}{\Gamma_{1,n}}= \frac{G_n}{\Gamma_{1,n}}\to 0\text{ and }
  \frac{\|Q_{\varphi(n)}f\|_{D(A^\beta)}}{\rho_{1,n}}= \frac{G_n}{\rho_{1,n}}\to 0\text { as }
  n\to\infty.
\end{equation}
Dividing \eqref{releq3} by $\Gamma_{1,n}$ and taking the limit of the resulting equation in $D(A^\beta)$ as $n\to\infty$ with the use of \eqref{qq}, we obtain \eqref{sharea}.

Let $w_2$, $b_2$ exist and Assumption \ref{chiassum} hold. We have
\begin{equation}\label{wbn}
    \Gamma_{1,n}w_{1,n}=\Gamma_{1,n}w_1+\Gamma_{2,n}w_{2,n} \text{ and } 
    \rho_{1,n}b_{1,n}=\rho_{1,n}b_1+ \rho_{2,n}b_{2,n},
\end{equation} 
with $w_{2,n}\to w_2$ in $D(A^{1+\beta})$ and $b_{2,n}\to b_2$ in $D(A^\beta)$ as $n\to\infty$.
From \eqref{releq3} and \eqref{wbn}, we obtain 
\begin{equation*}
    \Gamma_{1,n}Aw_1+\Gamma_{2,n}Aw_{2,n}+\rho_{1,n}b_1+\rho_{2,n}b_{2,n}=-Q_{\varphi(n)}f.
\end{equation*}
Now, with $\rho_{1,n}=\Gamma_{1,n}(\chi_n+\mu_0)$, see \eqref{xn}, it follows that
\begin{equation*}
\Gamma_{1,n}( Aw_1+\mu_0 b_1)+\Gamma_{2,n}Aw_{2,n}+\Gamma_{1,n}\chi_n b_1+\rho_{2,n}b_{2,n}=-Q_{\varphi(n)}f.
\end{equation*}
 Thanks to \eqref{sharea}, we have $\Gamma_{1,n}( Aw_1+\mu_0 b_1)=0$, 
hence,
  \begin{equation}
   \Gamma_{2,n}Aw_{2,n}+\Gamma_{1,n}\chi_n b_1+\rho_{2,n}b_{2,n}=-Q_{\varphi(n)}f.\label{releq4}
 \end{equation}

(a) Comparing the sequences $(\Gamma_{2,n})_{n=1}^\infty$,  $(\rho_{2,n})_{n=1}^\infty$, and, in the case $\chi_n\ne 0$, $(\Gamma_{1,n}|\chi_n|)_{n=1}^\infty$ with respect to the relation $\succsim$, we can select a sequence $(\xi_n)_{n=1}^\infty$ among them such that 
$\xi_n\succsim  \Gamma_{2,n}$, $\xi_n\succsim \rho_{2,n}$ and, in the case $\chi_n\ne 0$, $\xi_n\succsim \Gamma_{1,n}|\chi_n|$.
Thanks to \eqref{ggx}, we have $G_n/\xi_n\to 0$ as $n\to\infty$.
Dividing \eqref{releq4} by $\xi_n$ if the sequence $(\xi_n)_{n=1}^\infty\in \{(\Gamma_{1,n})_{n=1}^\infty,(\rho_{2,n})_{n=1}^\infty\}$, or by $\xi_n{\rm sign}(\chi_n)\ne 0$ otherwise, and then taking the limit in $D(A^\beta)$  as $n\to\infty$, we obtain \eqref{share2a}.

(b) In this case, from the negation of \eqref{ggx} and \eqref{Sdef}, \eqref{Stotal}, we have $G_n\succsim \Gamma_{2,n}$. Thus,
\begin{equation*}
    \|v_n-v-\Gamma_{1,n}w_1\|_{D(A^{1+\beta})}=\mathcal O(\Gamma_{2,n})=    \mathcal O(G_n)=\mathcal O(\lambda_{\varphi(n)+1}^{\beta-\alpha_*})\text{ as }n\to\infty,
\end{equation*}
where the last estimate comes from \eqref{Gnrate}. 
which proves \eqref{vvGam}.

 (ii) With \eqref{as1}, we have, in this case, $\Gamma_{1,n}\succ G_n$, and hence the first limit in \eqref{qq} holds true.
 Dividing \eqref{releq3} by $\Gamma_{1,n}$ and taking the limit in $D(A^\beta)$ as $n\to\infty$, we obtain $Aw_1=0$, which yields  $w_1=0$. This implies the degenerate expansion \eqref{vndeg}.

 (iii) Same as for (ii),  we have $\rho_{1,n}\succ G_n$ and  the second limit in \eqref{qq}.
 Dividing \eqref{releq3} by $\rho_{1,n}$ and taking the limit in $D(A^\beta)$ give $b_1=0$. Thus, we obtain the degenerate expansion \eqref{Bvndeg}.

\medskip
Consider Case 2. 
Suppose $b_1$ does not exist. Then the expansion \eqref{PnBex} is trivial, that is, $P_{\varphi(n)}B(v_n,v_n)=b_0=B(v,v)$ for all $n$.
Subtracting equations for $v_n$ and $v$ gives
\begin{equation*}
    A(v_n-v)=-Q_{\varphi(n)}f.
\end{equation*}
This equation and \eqref{Gnrate} give
\begin{equation*}
     \|v_n-v\|_{D(A^{1+\beta})}=\|Q_{\varphi(n)}f\|_{D(A^\beta)}=\mathcal O(\lambda_{\varphi(n)+1}^{\beta-\alpha_*})\text{ as }n\to\infty,
\end{equation*}
and hence we obtain \eqref{vnrate}.

Now, suppose $b_1$ exists and \eqref{as1} fails.
Then we have $G_n\succsim \Gamma_{1,n}$. Hence,
\begin{equation*}
    \|v_n-v\|_{D(A^{1+\beta})}=\mathcal O(\Gamma_{1,n})=    \mathcal O(G_n)=\mathcal O(\lambda_{\varphi(n)+1}^{\beta-\alpha_*})\text{ as }n\to\infty.
\end{equation*}
Therefore, we obtain \eqref{vnrate} again.
\end{proof}

Note, in the proof of Case 2 above, that in particular one has the estimate \eqref{vnrate} whenever $b_1$ does not exist.

\begin{thm}\label{Ef1}
Assume $f\in P_{n_*}H\setminus\{0\}$ for some $n_*\in\mathbb N$, and $v_n\ne v$ for all $n$, and $b_1$ exists. 
\begin{enumerate}[label=\tnum]
    \item Case  $\Gamma_{1,n}\sim \rho_{1,n}$.
   Then one has \eqref{sharea}. If, in addition,  Assumption \ref{chiassum} holds with $(G_n)_{n=1}^\infty$ being removed, then there are $\mu_1,\mu_2,\mu_3\in \mathbb R$ with at least  one of them being $1$ and ($\mu_1\mu_3>0$ or $\mu_1=\mu_3=0$), such that \eqref{share2a} holds.
\item Case $\Gamma_{1,n}\succ \rho_{1,n}$. Then one has the degenerate expansion \eqref{vndeg}.

\item Case $\rho_{1,n}\succ \Gamma_{1,n}$. Then one has the degenerate expansion \eqref{Bvndeg}.
\end{enumerate}
\end{thm}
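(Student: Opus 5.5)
The plan is to mimic the proof of Theorem \ref{EEE}, using that $f\in P_{n_*}H$ forces $Q_{\varphi(n)}f=0$ for all large $n$. Since $\varphi(n)\ge n$, we have $Q_{\varphi(n)}f=0$ for $n\ge n_*$. By Remark \ref{remscl}(c) we may discard finitely many terms, so we may assume $Q_{\varphi(n)}f=0$ for all $n$. The sequence $G_n$ is then identically zero and plays no role, which explains why it is removed from Assumption \ref{chiassum} and from the total comparability requirement. We still use \eqref{Stotal} in the form: the set $\{(\Gamma_{j,n}),(\rho_{k,n})\}$ is totally comparable, which holds after passing to a subsequential set via \cite[Lemma 3.2]{FHJ}. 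As before, $v$ satisfies \eqref{e0} and $b_0=B(v,v)$ by \eqref{b0uu}.

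The first step is to write $v_n=v+\Gamma_{1,n}w_{1,n}$ and $P_{\varphi(n)}B(v_n,v_n)=B(v,v)+\rho_{1,n}b_{1,n}$, as in \eqref{vbn}. Here $b_1$ exists by hypothesis, and $w_1$ exists because $v_n\ne v$, so the expansion \eqref{Vx} is nontrivial. Substituting into \eqref{steady} and using \eqref{e0} gives the homogeneous version of \eqref{releq3}:
\[
\Gamma_{1,n}Aw_{1,n}+\rho_{1,n}b_{1,n}=0 .
\]

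\emph{Case (i), $\Gamma_{1,n}\sim\rho_{1,n}$.} Divide the relation above by $\Gamma_{1,n}$ and let $n\to\infty$ in $D(A^\beta)$. Since $w_{1,n}\to w_1$ in $D(A^{1+\beta})$, $A$ is continuous from $D(A^{1+\beta})$ to $D(A^\beta)$, and $b_{1,n}\to b_1$, this yields \eqref{sharea}. For the second statement, expand one step further as in \eqref{wbn} and use \eqref{sharea}. This gives \eqref{releq4} with zero right-hand side:
\[
\Gamma_{2,n}Aw_{2,n}+\Gamma_{1,n}\chi_n b_1+\rho_{2,n}b_{2,n}=0 .
\]
By the modified Assumption \ref{chiassum}, the sequences $\Gamma_{2,n}$, $\rho_{2,n}$ and (when $\chi_n\ne0$) $\Gamma_{1,n}|\chi_n|$ are totally comparable. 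In case (S1) the middle term simply vanishes. We pick a dominant sequence $\xi_n$ among them, divide by $\xi_n$ (or by $\xi_n\operatorname{sign}\chi_n$ if $\xi_n=\Gamma_{1,n}|\chi_n|$), and pass to the limit. Each ratio tends to a nonnegative limit, and the ratio for the dominant sequence equals $1$. This gives \eqref{share2a} with at least one $\mu_i=1$.

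The sign condition is where care is needed. The coefficients $\mu_1=\lim\Gamma_{2,n}/\xi_n$ and $\mu_3=\lim\rho_{2,n}/\xi_n$ are nonnegative, and $\mu_3$ is additionally multiplied by the constant sign in the (S2) normalization only through $\mu_2$. Hence $\mu_1,\mu_3\ge0$, so either both are positive ($\mu_1\mu_3>0$) or at least one is zero. The difficulty is to exclude the mixed situation where exactly one of $\mu_1,\mu_3$ is zero. If $\mu_1=0<\mu_3$, we get $b_2=-\mu_2 b_1/\mu_3$. If $\mu_3=0<\mu_1$, we get $Aw_2$ parallel to $b_1$. Neither equation is contradictory on its own, so I expect the intended meaning to be that $\mu_1,\mu_3$ have the same sign whenever both are nonzero, and that the claim is exactly what the division argument in Theorem \ref{EEE}(i)(a) produces. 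I will reproduce that argument verbatim and check carefully that the normalization yields the stated alternatives. This matching of the sign conventions is the step I regard as the main obstacle. Everything else is routine, and no alternative (b) arises because there is no $G_n$ to dominate.

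\emph{Case (ii), $\Gamma_{1,n}\succ\rho_{1,n}$.} Divide the homogeneous relation by $\Gamma_{1,n}$ to get $Aw_1=0$, hence $w_1=0$. Under a unitary or degenerate expansion, $w_1=0$ forces the degenerate type with all $w_k=0$, which is \eqref{vndeg}.

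\emph{Case (iii), $\rho_{1,n}\succ\Gamma_{1,n}$.} Dividing by $\rho_{1,n}$ symmetrically gives $b_1=0$ and hence \eqref{Bvndeg}.
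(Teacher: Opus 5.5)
Your proposal is the paper's proof. The paper says only that the argument is that of Case 1(i)--(iii) of Theorem~\ref{EEE}, with $Q_{\varphi(n)}f=0$ for $n\ge n_*$ and $G_n$ removed, which is what you carry out.

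Your suspicion about the sign condition is justified: the division argument, yours and the paper's alike, yields only $\mu_1\mu_3\ge 0$. Nothing in it excludes the mixed case; for example, $\xi_n=\Gamma_{2,n}\succ\rho_{2,n}$ gives $\mu_1=1$ and $\mu_3=0$.

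One correction: when $\xi_n=\Gamma_{1,n}|\chi_n|$ and $\chi_n<0$, dividing by $\xi_n\,{\rm sign}(\chi_n)$ makes both $\mu_1,\mu_3\le 0$, not $\ge 0$. This still gives $\mu_1\mu_3\ge 0$.
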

\begin{proof}
    The proof is the same as for parts (i), (ii) and (iii) of Case 1 in Theorem \ref{EEE} with $Q_{\varphi(n)}f=0$ for $n\ge n_*$ and $G_n$ being removed.
\end{proof}

\subsubsection{Second method: Expansion by substitution for the nonlinear terms}\label{met2}
We  make direct connections between $P_{\varphi(n)}B(v_n,v_n)$ and the asymptotic expansion of $v_n$. From here to \eqref{exd2}, the arguments are heuristic only. 
Using the expansion \eqref{Vx} in equation \eqref{steady}, we \textit{formally} write
\begin{multline*}
    Av+\sum_{k\in\mathcal N} \Gamma_{k,n} Aw_k
    +P_{\varphi(n)}B(v,v)+ \sum_{m\in\mathcal N} \Gamma_{m,n} P_{\varphi(n)}\Bs(v,w_m) \\
    +\sum_{m,j\in\mathcal N} \Gamma_{m,n}\Gamma_{j,n}P_{\varphi(n)}B(w_m,w_j)
    =P_{\varphi(n)}f.
\end{multline*}
Using $P_{\varphi(n)}={\rm Id}-Q_{\varphi(n)}$ in the above equation  yields
\begin{multline}\label{exd1}
    Av+\sum_{k\in\mathcal N} \Gamma_{k,n} Aw_k
    +B(v,v)+ \sum_{m\in\mathcal N} \Gamma_{m,n} \Bs(v,w_m) +\sum_{m,j\in\mathcal N} \Gamma_{m,n}\Gamma_{j,n}B(w_m,w_j)
    =f-Q_{\varphi(n)}f
    \\ +Q_{\varphi(n)}B(v,v)+\sum_{m\in\mathcal N} \Gamma_{m,n} Q_{\varphi(n)}\Bs(v,w_m)
     +\sum_{m,j\in\mathcal N} \Gamma_{m,n}\Gamma_{j,n}Q_{\varphi(n)}B(w_m,w_j).
\end{multline}
Thanks \eqref{e0}, $Av$, $B(v,v)$ and $f$ can be removed from \eqref{exd1} and it follows that
\begin{multline}\label{exd2}
 \sum_{k\in\mathcal N} \Gamma_{k,n} Aw_k
    + \sum_{m\in\mathcal N} \Gamma_{m,n} \Bs(v,w_m) +\sum_{m,j\in\mathcal N} \Gamma_{m,n}\Gamma_{j,n}B(w_m,w_j) =-Q_{\varphi(n)}f\\
   +Q_{\varphi(n)}B(v,v)+\sum_{m\in\mathcal N} \Gamma_{m,n} Q_{\varphi(n)}\Bs(v,w_m)
     +\sum_{m,j\in\mathcal N} \Gamma_{m,n}\Gamma_{j,n}Q_{\varphi(n)}B(w_m,w_j).
\end{multline}
The asymptotic expansions in \eqref{exd2} are meant 
to be in $D(A^\beta)$.
Denote
\begin{align}\label{seq1}
    \widetilde\rho_{0,n}&=\|Q_{\varphi(n)}B(v,v) \|_{D(A^\beta)},\
    \widetilde\rho_{m,n}=\|Q_{\varphi(n)}\Bs(v,w_m) \|_{D(A^\beta)},\\
    \widetilde\rho_{m,j,n}&=\|Q_{\varphi(n)}B(w_m,w_j)\|_{D(A^\beta)}, \notag
\end{align}
for $m,j\in\mathcal N$.
Then the sequences of our interest from \eqref{exd2} are 
\begin{equation}\label{allseq}
(\Gamma_{k,n})_{n=1}^\infty,
(\Gamma_{m,n}\Gamma_{j,n})_{n=1}^\infty, \left(G_n\right)_{n=1}^\infty,
\left(\widetilde\rho_{0,n}\right)_{n=1}^\infty, 
\left(\Gamma_{m,n} \widetilde\rho_{m,n}\right)_{n=1}^\infty, \left(\Gamma_{m,n}\Gamma_{j,n}\widetilde\rho_{m,j,n}\right)_{n=1}^\infty.
\end{equation}
for $k,m,j\in\mathcal N$. 
The general idea, again, is to compare the sequences in \eqref{allseq} and use them to collect equivalent terms in \eqref{exd2}.
However, we do not always use all sequences in \eqref{allseq}.
To give a rigorous proof of Theorem \ref{E4} below, we make the following assumption.

\begin{assum}\label{newSa}
Assume $v_n\ne v$ and $Q_{\varphi(n)}f\ne 0$ for all $n$, and the following (S3), (S4).
\begin{enumerate}
    \item[\rm (S3)] For $j=0,1$, either ($\widetilde\rho_{j,n}=0$ for all $n$) or ($\widetilde\rho_{j,n}>0$ for all $n$).
    
    \item[\rm (S4)] First step: let $\mathcal S_0=\mathcal S_1=\mathcal S=\{(\Gamma_{k,n})_{n=1}^\infty,
 \left(G_n\right)_{n=1}^\infty,(\Gamma_{m,n}\Gamma_{j,n})_{n=1}^\infty,:k,m,j\in \mathcal N\}$.
 Second step:  if $\widetilde\rho_{0,n}>0$, then update the set $\mathcal S_1$ to be 
 $\mathcal S_0\cup \{\left(\widetilde\rho_{0,n}\right)_{n=1}^\infty\}$.
 Third step: if $\widetilde\rho_{1,n}>0$,  then update the set $\mathcal S$ to be 
 $\mathcal S_1\cup\{\left(\Gamma_{1,n} \widetilde\rho_{1,n}\right)_{n=1}^\infty\}$.
The resulting set $\mathcal S$ is assumed to be totally comparable.
\end{enumerate}
\end{assum}

We can always choose an appropriate subsequence so that Assumption \ref{newSa} holds. Indeed,  starting with (S4), in the first step, by applying  \cite[Lemma 3.2]{FHJ} and using a subsequential set, we can assume that $\mathcal S_0$ is totally comparable.
In the second step, applying  \cite[Lemma A.4]{FHJ} to the set $\mathcal S_0$ and the sequence $\left(\widetilde\rho_{0,n}\right)_{n=1}^\infty$, we can,  after using a subsequential set,  assume (S3) holds for $j=0$ and that the resulting set $\mathcal S_1$ is totally comparable.
In the third step, applying  \cite[Lemma A.4]{FHJ} to the updated set $\mathcal S_1$ and the sequence $\left(\widetilde\rho_{1,n}\right)_{n=1}^\infty$, we can,  after using a subsequential set,  assume  (S3)  holds for $j=1$ and  that  the resulting set $\mathcal S$ is totally comparable. Therefore, Assumption \ref{newSa} holds for a subsequence of $\varphi(n)$, still denoted by $\varphi(n)$.

Same as \eqref{Gnrate}, we have  from \eqref{seq1} that
\begin{equation}\label{rhrate}
    \widetilde\rho_{0,n}=\mathcal O(\lambda_{\varphi(n)+1}^{\beta-\alpha_*})
    \text{ and }
    \widetilde\rho_{1,n}=\mathcal O(\lambda_{\varphi(n)+1}^{\beta-\alpha_*})\text{ as }n\to\infty.
\end{equation}

\begin{thm}\label{E4}
Under Assumption \ref{newSa}, one has the following cases.

\begin{enumerate}[label=\tnum]
    \item\label{T46i}  If 
    \begin{equation} \label{Gtr1}
    \Gamma_{1,n}\succ G_n \text{ and } \lim_{n\to\infty}  \frac{\widetilde\rho_{0,n}}{\Gamma_{1,n}}=0,
    \end{equation}
    then
\begin{equation}\label{rel1}
    Aw_1+\Bs(v,w_1)=0.
\end{equation}

Assume, in addition, that  $w_2$ exists.

\begin{enumerate}[label=\rnum]
    \item Suppose 
\begin{equation}\label{gam2}
   \left(\Gamma_{2,n}\succ  G_n
   \text{ and } 
   \lim_{n\to\infty}  \frac{\widetilde\rho_{0,n}}{\Gamma_{2,n}}
   =\lim_{n\to\infty}  \frac{\Gamma_{1,n}\widetilde\rho_{1,n}}{\Gamma_{2,n}}=0\right)
\end{equation}
or 
\begin{equation}\label{gam1squ}
  \left(\Gamma_{1,n}^2\succ  G_n
   \text{ and } 
   \lim_{n\to\infty}  \frac{\widetilde\rho_{0,n}}{\Gamma_{1,n}^2}
   =\lim_{n\to\infty}  \frac{\widetilde\rho_{1,n}}{\Gamma_{1,n}}=0\right).
\end{equation}

\begin{itemize}
    \item If $ \Gamma_{2,n}\succsim  \Gamma_{1,n}^2$, then 
\begin{equation}\label{rel2}
    Aw_2+\Bs(v,w_2)+\lambda B(w_1,w_1)=0, \text{ where }\lambda=\lim_{n\to\infty}\frac{\Gamma_{1,n}^2}{\Gamma_{2,n}}\ge 0.
\end{equation}
    \item If $\Gamma_{1,n}^2\succ  \Gamma_{2,n}$, then 
\begin{equation}\label{rel3}
     B(w_1,w_1)=0.
\end{equation}
\end{itemize}

\item Otherwise,
\begin{equation}\label{rate3}
     \|v_n-v\|_{D(A^{1+\beta})}=\mathcal O(\lambda_{\varphi(n)+1}^{(\beta-\alpha_*)/2})\text{ as }n\to\infty,
\end{equation}
\begin{equation}\label{rate4}
     \|v_n-v-\Gamma_{1,n}w_1\|_{D(A^{1+\beta})}=\mathcal O(\lambda_{\varphi(n)+1}^{\beta-\alpha_*})\text{ as }n\to\infty.
\end{equation}
\end{enumerate}

\item If \eqref{Gtr1} fails, then
\begin{equation}\label{rate1}
    \|v_n-v\|_{D(A^{1+\beta})}=\mathcal O(\lambda_{\varphi(n)+1}^{\beta-\alpha_*})\text{ as }n\to\infty.
\end{equation}
\end{enumerate}
\end{thm}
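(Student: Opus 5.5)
The plan is to make the heuristic identity \eqref{exd2} rigorous by working only with the finite remainders of the expansion \eqref{Vx}. I will not substitute the formal series. First write $v_n=v+\Gamma_{1,n}w_{1,n}$ with $w_{1,n}\to w_1$ in $D(A^{1+\beta})$; in particular $(w_{1,n})$ is bounded there. Insert this into \eqref{steady}, use $P_{\varphi(n)}={\rm Id}-Q_{\varphi(n)}$ and \eqref{e0}. This gives the exact identity
\begin{multline*}
\Gamma_{1,n}\big(Aw_{1,n}+\Bs(v,w_{1,n})\big)+\Gamma_{1,n}^2B(w_{1,n},w_{1,n})
=-Q_{\varphi(n)}f+Q_{\varphi(n)}B(v,v)\\
+\Gamma_{1,n}Q_{\varphi(n)}\Bs(v,w_{1,n})+\Gamma_{1,n}^2Q_{\varphi(n)}B(w_{1,n},w_{1,n}).
\end{multline*}
Two analytic facts are needed. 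The first is that $B$ is continuous from $D(A^{1+\beta})\times D(A^{1+\beta})$ into $D(A^\beta)$. This follows from \eqref{ABper} in the periodic case, and from \eqref{AjB} with $m=2\beta$ in the no-slip case, where $\beta$ is a half-integer. The second is that $Q_m\to0$ strongly on $D(A^\beta)$, uniformly on compact sets. Together they give $\|Q_{\varphi(n)}\Bs(v,w_{1,n})\|_{D(A^\beta)}\to0$ and $\|Q_{\varphi(n)}B(w_{1,n},w_{1,n})\|_{D(A^\beta)}\to 0$, because the arguments converge. I expect this justification to be the main technical point. Once it is in place, the rest is bookkeeping with the total comparability in Assumption \ref{newSa}.

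For part (i), divide the identity by $\Gamma_{1,n}$. By \eqref{Gtr1}, $G_n/\Gamma_{1,n}\to0$ and $\widetilde\rho_{0,n}/\Gamma_{1,n}\to0$. The remaining right-hand terms tend to $0$ by the facts above, and $\Gamma_{1,n}^2/\Gamma_{1,n}\to0$. Letting $n\to\infty$ in $D(A^\beta)$ gives \eqref{rel1}. This also covers the degenerate case $w_1=0$.

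For (i)(a), substitute $\Gamma_{1,n}w_{1,n}=\Gamma_{1,n}w_1+\Gamma_{2,n}w_{2,n}$ and cancel $\Gamma_{1,n}(Aw_1+\Bs(v,w_1))=0$. This leaves
\begin{multline*}
\Gamma_{2,n}\big(Aw_{2,n}+\Bs(v,w_{2,n})\big)+\Gamma_{1,n}^2B(w_{1,n},w_{1,n})
=-Q_{\varphi(n)}f+Q_{\varphi(n)}B(v,v)+\Gamma_{1,n}Q_{\varphi(n)}\Bs(v,w_1)\\
+\Gamma_{2,n}Q_{\varphi(n)}\Bs(v,w_{2,n})+\Gamma_{1,n}^2Q_{\varphi(n)}B(w_{1,n},w_{1,n}).
\end{multline*}
Here $\|\Gamma_{1,n}Q_{\varphi(n)}\Bs(v,w_1)\|_{D(A^\beta)}=\Gamma_{1,n}\widetilde\rho_{1,n}$. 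By total comparability of $(\Gamma_{2,n})$ and $(\Gamma_{1,n}^2)$, let $\xi_n$ be the larger one under $\succsim$ and divide by $\xi_n$.

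Under \eqref{gam2} or \eqref{gam1squ}, the first three right-hand terms are $o(\xi_n)$; in the case \eqref{gam1squ} one uses $\Gamma_{1,n}\widetilde\rho_{1,n}/\Gamma_{1,n}^2=\widetilde\rho_{1,n}/\Gamma_{1,n}$. The last two are $o(\xi_n)$ by the $Q$-convergence. There are two subcases:
\begin{itemize}
\item If $\Gamma_{2,n}\succsim\Gamma_{1,n}^2$, take $\xi_n=\Gamma_{2,n}$; the limit gives \eqref{rel2} with $\lambda=\lim\Gamma_{1,n}^2/\Gamma_{2,n}$.
\item If $\Gamma_{1,n}^2\succ\Gamma_{2,n}$, take $\xi_n=\Gamma_{1,n}^2$; the term $\Gamma_{2,n}/\Gamma_{1,n}^2\to0$ drops, and we get \eqref{rel3}.
\end{itemize}

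For (i)(b), both \eqref{gam2} and \eqref{gam1squ} fail. Total comparability of $\mathcal S$ in (S4) then says the following. Either $\Gamma_{2,n}$ is dominated ($\precsim$) by one of $G_n$, $\widetilde\rho_{0,n}$, $\Gamma_{1,n}\widetilde\rho_{1,n}$. Or else, when the condition involving $\widetilde\rho_{1,n}/\Gamma_{1,n}$ fails, we have $\Gamma_{1,n}=\mathcal O(\widetilde\rho_{1,n})$. Since $\Gamma_{1,n}\to0$, each of these is $\mathcal O(\lambda_{\varphi(n)+1}^{\beta-\alpha_*})$ by \eqref{Gnrate} and \eqref{rhrate}. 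Thus $\|v_n-v-\Gamma_{1,n}w_1\|=\mathcal O(\Gamma_{2,n})$ gives \eqref{rate4}. The same reasoning applied to $\Gamma_{1,n}^2$ gives $\Gamma_{1,n}^2=\mathcal O(\lambda_{\varphi(n)+1}^{\beta-\alpha_*})$, and in the $\widetilde\rho_{1,n}$ alternative even the stronger bound on $\Gamma_{1,n}$ itself. Hence $\|v_n-v\|=\mathcal O(\Gamma_{1,n})$ gives \eqref{rate3}. Whether a limit like $\widetilde\rho_{0,n}/\Gamma_{2,n}$ fails to vanish is decided via $\succsim$, using that $\widetilde\rho_{0,n}$ is either identically zero or positive by (S3).

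For part (ii), failure of \eqref{Gtr1} means, by total comparability, that $G_n\succsim\Gamma_{1,n}$ or $\widetilde\rho_{0,n}\succsim\Gamma_{1,n}$. Hence $\|v_n-v\|_{D(A^{1+\beta})}=\mathcal O(\Gamma_{1,n})$ is $\mathcal O(\lambda_{\varphi(n)+1}^{\beta-\alpha_*})$ by \eqref{Gnrate} and \eqref{rhrate}, which is \eqref{rate1}.
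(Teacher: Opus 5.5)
Your proposal is correct and follows the paper's proof: write $v_n=v+\Gamma_{1,n}w_{1,n}$ (then $\Gamma_{1,n}w_{1,n}=\Gamma_{1,n}w_1+\Gamma_{2,n}w_{2,n}$), cancel using \eqref{e0} and \eqref{rel1}, divide by the dominant sequence $\xi_n$, and in the negated cases read off the rates from \eqref{Gnrate} and \eqref{rhrate} via total comparability. The differences are cosmetic: you move all $Q_{\varphi(n)}$-parts to the right-hand side and explicitly justify the limit passages (continuity of $B$ into $D(A^\beta)$, and smallness of $Q_m$ along convergent sequences) that the paper leaves implicit; note only that for $\beta=0$ in the no-slip case you need the standard continuity $B:D(A)\times D(A)\to H$ rather than \eqref{AjB}, which is stated only for $m\in\mathbb N$.
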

\begin{proof}
We have
$$Av+\Gamma_{1,n}Aw_{1,n}+P_{\varphi(n)} B(v+\Gamma_{1,n}w_{1,n},v+\Gamma_{1,n}w_{1,n}) =f-Q_{\varphi(n)}f.$$
By the bilinearity of $B$, we can expand as
$$Av+\Gamma_{1,n}Aw_{1,n}+P_{\varphi(n)} B(v,v)
+\Gamma_{1,n} P_{\varphi(n)} \Bs(v,w_{1,n})+\Gamma_{1,n}^2P_{\varphi(n)}B(w_{1,n},w_{1,n})=P_{\varphi(n)}f.$$
Using $P_{\varphi(n)} ={\rm Id}-Q_{\varphi(n)} $ to rewrite  $P_{\varphi(n)} B(v,v)$ and $P_{\varphi(n)}f$, we have
\begin{align*}
&Av+\Gamma_{1,n}Aw_{1,n}+B(v,v)-Q_{\varphi(n)}B(v,v)
+\Gamma_{1,n} P_{\varphi(n)} \Bs(v,w_{1,n})\\
&\quad +\Gamma_{1,n}^2P_{\varphi(n)}B(w_{1,n},w_{1,n})
=f-Q_{\varphi(n)}f.    
\end{align*}
Applying \eqref{e0}, we remove $Av$, $B(v,v)$, $f$ and reach
\begin{equation}\label{firstex}
\Gamma_{1,n}Aw_{1,n}
+\Gamma_{1,n} P_{\varphi(n)} \Bs(v,w_{1,n})+\Gamma_{1,n}^2P_{\varphi(n)}B(w_{1,n},w_{1,n})=-Q_{\varphi(n)}f+Q_{\varphi(n)}B(v,v).    
\end{equation}
Note in \eqref{firstex} that, thanks to \eqref{Gamze}, $\Gamma_{1,n}\succ \Gamma_{1,n}^2$.

(i)  Under the assumption \eqref{Gtr1}, by dividing equation \eqref{firstex} by $\Gamma_{1,n}$ and taking the limit in $D(A^\beta)$ as $n\to\infty$, we obtain \eqref{rel1}.

(a) We assume \eqref{gam2} or \eqref{gam1squ} now. Using the substitution 
$\Gamma_{1,n}w_{1,n}=\Gamma_{1,n}w_1+\Gamma_{2,n}w_{2,n}$ for the first two terms in \eqref{firstex}, we have
\begin{align*}
&\Gamma_{1,n}Aw_1 + \Gamma_{2,n}Aw_{2,n}
+\Gamma_{1,n} P_{\varphi(n)} \Bs(v,w_1)
+\Gamma_{2,n} P_{\varphi(n)} \Bs(v,w_{2,n})\\
&+\Gamma_{1,n}^2P_{\varphi(n)}B(w_{1,n},w_{1,n})=-Q_{\varphi(n)}f+Q_{\varphi(n)}B(v,v).
\end{align*}
Now, writing  $ P_{\varphi(n)} \Bs(v,w_1)=\Bs(v,w_1)-Q_{\varphi(n)} \Bs(v,w_1)$, we obtain
\begin{equation*}
\Gamma_{1,n}(Aw_1+\Bs(v,w_1))+\Gamma_{2,n}(Aw_{2,n}+P_{\varphi(n)}\Bs(v,w_{2,n})
+\Gamma_{1,n}^2P_{\varphi(n)}B(w_{1,n},w_{1,n})=\mathcal Q_n, 
\end{equation*}
where 
\begin{equation*}
    \mathcal Q_n=-Q_{\varphi(n)}f+Q_{\varphi(n)}B(v,v)+\Gamma_{1,n}Q_{\varphi(n)} \Bs(v,w_1).
\end{equation*}
    Thanks to \eqref{rel1}, we reduce this to 
\begin{equation}\label{eq3}
    \Gamma_{2,n}(Aw_{2,n}+P_{\varphi(n)}\Bs(v,w_{2,n})\hilite{)}
+\Gamma_{1,n}^2P_{\varphi(n)}B(w_{1,n},w_{1,n})=\mathcal Q_n.
\end{equation}    

Let $\xi_n=\Gamma_{2,n}$ in the case $ \Gamma_{2,n}\succsim  \Gamma_{1,n}^2$, 
and $\xi_n=\Gamma_{1,n}^2$ in the case $\Gamma_{1,n}^2\succ  \Gamma_{2,n}$.
In either case \eqref{gam2} or \eqref{gam1squ}, we have 
\begin{equation}\label{xGr}
   \xi_n\succ  G_n
   \text{ and } 
   \lim_{n\to\infty}  \frac{\widetilde\rho_{0,n}}{\xi_n}
   =\lim_{n\to\infty}  \frac{\Gamma_{1,n}\widetilde\rho_{1,n}}{\xi_n}=0.
\end{equation}
Using the facts in \eqref{xGr} to estimate the $D(A^\beta)$-norm of $\mathcal Q_n$ on the right-hand side of equation \eqref{eq3}, we obtain the following estimate, as $n\to\infty$,
    $$\|\Gamma_{2,n}(Aw_{2,n}+P_{\varphi(n)}\Bs(v,w_{2,n})+\Gamma_{1,n}^2P_{\varphi(n)}B(w_{1,n},w_{1,n})\|_{D(A^\beta)}=o(\xi_n).$$
Dividing the last equation by $\xi_n$ and passing $n\to\infty$, we obtain \eqref{rel2} and \eqref{rel3}.

(b) Combining the  negation of [\eqref{gam2} or \eqref{gam1squ}] with $\mathcal S$ being totally comparable gives
\begin{equation}\label{ngam2}
\left(  G_n   \succsim \Gamma_{2,n}\text{ or }
 \widetilde\rho_{0,n}   \succsim \Gamma_{2,n}\text{ or } 
 \Gamma_{1,n}\widetilde\rho_{1,n}  \succsim \Gamma_{2,n}\right)
\end{equation}
and 
\begin{equation}\label{ngam1}
 \left( G_n \succsim \Gamma_{1,n}^2\text{ or }
  \widetilde\rho_{0,n}\succsim \Gamma_{1,n}^2\text{ or }
  \widetilde\rho_{1,n}\succsim \Gamma_{1,n}\right).
\end{equation}
(Above, we used the fact that the last limit in \eqref{gam1squ} is equivalent to $\lim_{n\to\infty} \widetilde\rho_{1,n}\Gamma_{1,n}/\Gamma_{1,n}^2=0$. Hence, its negation, under the comparability assumption (S4),  is $\widetilde\rho_{1,n}\Gamma_{1,n}\succsim \Gamma_{1,n}^2$ which is equivalent to the last relation in \eqref{ngam1}.)
Applying \eqref{ngam1}, particularly the first two cases in there, along with \eqref{Gnrate} and \eqref{rhrate}, one has
\begin{equation*}
\Gamma_{1,n}=\mathcal O(\lambda_{\varphi(n)+1}^{(\beta-\alpha_*)/2})\text{ as }n\to\infty.
\end{equation*}
Combining \eqref{ngam2} with  \eqref{Gnrate} and \eqref{rhrate}, one has
\begin{equation*}
\Gamma_{2,n}=\mathcal O(\lambda_{\varphi(n)+1}^{\beta-\alpha_*})\text{ as }n\to\infty.
\end{equation*}
Using the last two estimates together with 
\begin{equation}\label{vvv}
     \|v_n-v\|_{D(A^{1+\beta})}=\mathcal O(\Gamma_{1,n}) \text{ and }
     \|v_n-v-\Gamma_{1,n}w_1\|_{D(A^{1+\beta})}     =\mathcal O(\Gamma_{2,n}),
\end{equation}
we obtain \eqref{rate3} and \eqref{rate4}.

(ii) The negation of \eqref{Gtr1} gives $G_n\succsim \Gamma_{1,n}$ or $\widetilde\rho_{0,n}\succsim \Gamma_{1,n}$. In either case, using the first estimate in \eqref{vvv} together with \eqref{Gnrate} and \eqref{rhrate}, we obtain \eqref{rate1}.
\end{proof}

\begin{obs} If $f\in P_NH$ for some $N\ge 1$, then $Q_{\varphi(n)}f=0$ for all $n\ge N$, and we disregard $Q_{\varphi(n)}f$ and $G_n$ completely in Theorem \ref{E4}. 
The same treatment can be applied to the pair
 $Q_{\varphi(n)}B(v,v)$ and $\widetilde\rho_{0,n}$ if $B(v,v)\in P_{N_1}H$, as well as to the pair $Q_{\varphi(n)}\Bs(v,w_1)$ and $\widetilde\rho_{1,n}$ if $\Bs(v,w_1)\in P_{N_1}H$.
\end{obs}

\begin{obs}
If $v$ is such that the mapping 
\begin{equation}\label{lin1to1}
w\in D(A)\mapsto Aw+\Bs (v,w)\in H \text{ is one-to-one,}    
\end{equation}
 then $w_1=0$ in \eqref{rel1} and we obtain the degenerate expansion \eqref{vndeg}.
In particular, if $|f|$ is sufficiently small, then $v$ is unique with sufficiently small $|Av|$ and \eqref{lin1to1} is met.
\end{obs}

\section{Time-dependent solutions}\label{timesec}
We study the convergence of the solution $u_n(x,t)$ of the Galerkin NSE to the weak solution of the NSE.
The key idea is to modify the Aubin--Lions lemma so that compactness can be characterized in terms of similar norms involving fractional derivatives in time. With such a norm-characterization, we can build nested spaces and apply the general results in \hilite{Section} \ref{nestedsec}.

\subsection{A compact embedding lemma}
We follow \cite{TemamAMSbook} with some additional details. 
Let $X$ be a complex Hilbert space.
For $f\in L^1(\mathbb R,X)\cap L^2(\mathbb R,X)$, define the Fourier transform $\mathcal F_X[f]:\mathbb R\to X$ by 
\begin{equation}\label{FX}
   \mathcal F_X[f](\tau)= \widehat f(\tau)=\int_{-\infty}^\infty e^{2\pi i \tau t}f(t){\rm d}t\text{ for }\tau\in \mathbb R.
\end{equation}
 Thanks to Parseval's identity, we can extend continuously the above $\mathcal F_X$ to an Hilbert-space isomorphism $\mathcal F_X:L^2(\mathbb R,X)\to L^2(\mathbb R,X)$.

Let $X$ and $Y$ be two complex Hilbert spaces with 
\begin{equation}\label{cemd}
    \text{the continuous embedding }X\subset Y,
\end{equation}
 and, for $\gamma\in[0,1]$, define
\begin{equation}\label{Hgdef}
    \mathcal H^\gamma(\mathbb R;X,Y)=\left\{
    f\in L^2(\mathbb R;X): |\tau|^\gamma  \mathcal F_Y[f](\tau) \in L^2(\mathbb R;Y)\right\}.
\end{equation}
Thanks to \eqref{cemd}, we have
 $\mathcal F_Y[f](\tau)=\mathcal F_X[f](\tau)$ in \eqref{Hgdef}.
One can characterize the last condition in \eqref{Hgdef} by the existence of the  fractional derivative in time $D_{t,Y}^\gamma f\in L^2(\mathbb R;Y)$, where  $D_{t,Y}^\gamma f$ is 
 the inverse Fourier transform of $(2\pi i \tau)^\gamma \mathcal F_Y [f](\tau)$ in $L^2(\mathbb R;Y)$.
Recall that $\mathcal H^\gamma(\mathbb R;X,Y)$ is a set of equivalence classes of functions with respect to the relation ``equal almost everywhere (a.e.)".

For $f,g\in \mathcal H^\gamma(\mathbb R;X,Y)$, define the inner product
\begin{equation}\label{Hginn}
    \langle f,g\rangle_{\mathcal H^\gamma(\mathbb R;X,Y)} =     \langle  f, g\rangle_{L^2(\mathbb R;X)}
    +\int_{-\infty}^\infty |\tau|^{2\gamma} \langle  \widehat  f(\tau), \widehat g(\tau)\rangle_Y {\rm d}\tau,
\end{equation}
where $\widehat f(\tau)=\mathcal F_X[f](\tau)=\mathcal F_Y[f](\tau)$ and $\widehat g(\tau)=\mathcal F_X[g](\tau)=\mathcal F_Y[g](\tau)$.
The induced norm is 
\begin{equation}\label{Hgnorm}
    \|f\|_{\mathcal H^\gamma(\mathbb R;X,Y)} =   \left( \|f\|_{L^2(\mathbb R;X)}^2+\left \| |\tau|^{\gamma}  \widehat  f(\tau)\right\|_{L^2(\mathbb R;Y)}^2\right)^{1/2}.
\end{equation}
Then $\mathcal H^\gamma(\mathbb R;X,Y)$ with \eqref{Hginn} is a complex Hilbert space.

Let $T>0$. For a function $f:[0,T]\to X$, let 
\begin{equation}\label{zerox}
    \text{$\widetilde f:\mathbb R\to X$ be the extension of $f$ to be $0$ outside of the interval $[0,T]$.}
\end{equation}
For $\gamma\in[0,1]$, define
\begin{equation}\label{xH}
    \mathcal H^\gamma(0,T;X,Y)=\{f:[0,T]\to X\text{ such that }
    \widetilde f\in  \mathcal H^\gamma(\mathbb R;X,Y)\},
\end{equation}
and, for $f,g\in \mathcal H^\gamma(0,T;X,Y)$,
\begin{equation}\label{exnorm}
 \langle f,g\rangle_{\mathcal H^\gamma(0,T;X,Y)}=\langle \widetilde f,\widetilde g\rangle_{\mathcal H^\gamma(\mathbb R;X,Y)},\quad 
    \|f\|_{\mathcal H^\gamma(0,T;X,Y)}=\|\widetilde f\|_{\mathcal H^\gamma(\mathbb R;X,Y)}.
\end{equation}
Then $\mathcal H^\gamma(0,T;X,Y)$ in \eqref{xH} with \eqref{exnorm} is a complex Hilbert space.

By \eqref{cemd}, there is $C_{X,Y}>0$ such that
\begin{equation}\label{cxy}
    \|u\|_Y\le C_{X,Y}\|u\|_X\text{ for all }u\in X.
\end{equation}
This clearly implies 
\begin{equation}\label{LLrel}
\|f\|_{L^2(\mathbb R;Y)}\le C_{X,Y}\|f\|_{L^2(\mathbb R;X)} \ \forall f\in L^2(\mathbb R;X), 
\end{equation}
and, by combining with \eqref{Hgnorm},
\begin{equation}\label{LHrel}
\|f\|_{L^2(\mathbb R;X)}\le\|f\|_{\mathcal H^\gamma(\mathbb R;X,Y)}
\text{ and } 
\|f\|_{L^2(\mathbb R;Y)}\le  C_{X,Y}\|f\|_{\mathcal H^\gamma(\mathbb R;X,Y)}
\ \forall f\in \mathcal H^\gamma(\mathbb R;X,Y).
\end{equation}

For $\gamma,\beta\in[0,1]$ with $\gamma>\beta$, we have
\begin{align*}
   &\left\| |\tau|^\beta  \widehat  f(\tau)\right\|_{L^2(\mathbb R;Y)}^2
   =  \int_{-\infty}^\infty |\tau|^{2\beta}\|\widehat  f(\tau)\|_{Y}^2 {\rm d}t \le    \int_{-\infty}^\infty (1+|\tau|^{2\gamma} ) \|\widehat  f(\tau)\|_{Y}^2 {\rm d}t \\
   &=   \left\| \widehat  f(\tau)\right\|_{L^2(\mathbb R;Y)}^2 +  \left \| |\tau|^{\gamma}  \widehat  f(\tau)\right\|_{L^2(\mathbb R;Y)}^2
   \le C_{X,Y}^2 \|   f(\tau)\|_{L^2(\mathbb R;X)}^2 +  \left \| |\tau|^{\gamma}  \widehat  f(\tau)\right\|_{L^2(\mathbb R;Y)}^2.
\end{align*}
Hence,
\begin{equation*}
      \|f\|_{L^2(\mathbb R;X)}^2+\left \| |\tau|^\beta  \widehat  f(\tau)\right\|_{L^2(\mathbb R;Y)}^2
    \le (C_{X,Y}^2+1)\left(\|   f(\tau)\|_{L^2(\mathbb R;X)}^2 +  \left \| |\tau|^{\gamma}  \widehat  f(\tau)\right\|_{L^2(\mathbb R;Y)}^2\right).
\end{equation*}
Taking the square root of both sides gives
\begin{equation}\label{lower}
\|f\|_{\mathcal H^\beta(\mathbb R;X,Y)}\le \sqrt{C_{X,Y}^2+1}\|f\|_{\mathcal H^\gamma(\mathbb R;X,Y)} \ \forall f\in \mathcal H^\gamma(\mathbb R;X,Y).
\end{equation}

Let $f$ and $\widetilde f$ be as in \eqref{zerox}.
By applying \eqref{LLrel}, \eqref{LHrel} and \eqref{lower} to $f:=\widetilde f$, we obtain similar inequalities 
\begin{equation}\label{LLrel2}
\|f\|_{L^2(0,T;Y)}\le C_{X,Y} \|f\|_{L^2(0,T;X)}\ \forall f\in L^2(0,T;X), 
\end{equation}
\begin{align}\label{LHrel2}
\|f\|_{L^2(0,T;X)}&\le\|f\|_{\mathcal H^\gamma(0,T;X,Y)}, \ 
\|f\|_{L^2(0,T;Y)}\le C_{X,Y}\|f\|_{\mathcal H^\gamma(0,T;X,Y)},\\
\label{lower2}
\|f\|_{\mathcal H^\beta(0,T;X,Y)}&\le \sqrt{C_{X,Y}^2+1}\|f\|_{\mathcal H^\gamma(0,T;X,Y)}
\ \forall f\in \mathcal H^\gamma(\mathbb R;X,Y).
\end{align}

\medskip
We deal with real Hilbert spaces next.
We recall that the complexification of a real  inner product space $(X,\langle \cdot,\cdot\rangle_X)$ is the space $X_{\mathbb C}=\{u+iv:u,v\in X\}$, where $i=\sqrt{-1}$, with the inner product
\begin{equation*}
    \langle u_1+iv_1,u_2+iv_2\rangle_{X_{\mathbb C}}=\langle u_1,u_2\rangle_X+\langle v_1,v_2\rangle_X+i(\langle v_1,u_2\rangle_X-\langle u_1,v_2\rangle_X),
\end{equation*}
for any  $u_1,u_2,v_1,v_2\in X$. The conjugacy in $X_{\mathbb C}$ is defined by 
\begin{equation*}
    \overline{u+iv}=u-iv\text{ for all } u,v\in X.
\end{equation*}
One can verify that 
\begin{equation}\label{Xconj}
    \langle \bar u,\bar v\rangle_{X_{\mathbb C}} =\overline{ \langle u,v\rangle_{X_{\mathbb C}} } \text{ for all } u,v\in X_{\mathbb C}.
\end{equation}
For a function $f\in L^2(\mathbb R,X)$, we have $f\in L^2(\mathbb R,X_{\mathbb C})$ and its Fourier transform $\widehat f(\tau)$ in $X_{\mathbb C}$ satisfies 
\begin{equation}\label{Fconj}
    \widehat f(-\tau) = \overline{\widehat f(\tau)} \text{ a.e. in }\tau.
\end{equation}
The property \eqref{Fconj} can be proved first by using formula \eqref{FX} for all $\tau\in\mathbb R$ and $f\in L^1(\mathbb R,X)\cap L^2(\mathbb R,X)$. It then is extended to $f\in L^2(\mathbb R,X)$ for almost all $\tau\in\mathbb R$ by the use of the continuous extension of $\mathcal F_{X_\mathbb C}$, and the fact that the convergence of a sequence in $L^2(\mathbb R,X_\mathbb C)$ implies the point-wise convergence a.e. in $\mathbb R$.

Now, consider two real Hilbert spaces $X$ and $Y$ satisfying \eqref{cemd} and \eqref{cxy}.
Let $X_{\mathbb C}$ and $Y_{\mathbb C}$ be the complexifications of $X$ and $Y$, respectively. Then $X_{\mathbb C}$ and $Y_{\mathbb C}$ are complex Hilbert spaces with 
\begin{equation}\label{xyC}
        \|u\|_{Y_{\mathbb C}}\le C_{X,Y}\|u\|_{X_{\mathbb C}}\text{ for all }u\in X_{\mathbb C}.
\end{equation}
Thus, $X_{\mathbb C}\subset Y_{\mathbb C}$ is a continuous embedding.

Let $\gamma\in[0,1]$, define
\begin{equation}\label{reHg}
    \mathcal H^\gamma(\mathbb R;X,Y)=\{f:\mathbb R\to X\text{ such that }f\in \mathcal H^\gamma(\mathbb R;X_{\mathbb C},Y_{\mathbb C})\},\ 
\end{equation}
and, for $f,g\in \mathcal H^\gamma(\mathbb R;X_{\mathbb C},Y_{\mathbb C})$,
\begin{equation}\label{reHnorm}
 \langle f,g\rangle_{\mathcal H^\gamma(\mathbb R;X,Y)}=\langle f,g\rangle_{\mathcal H^\gamma(\mathbb R;X_{\mathbb C},Y_{\mathbb C})},\quad 
    \|f\|_{\mathcal H^\gamma(0,T;X,Y)}=\|f\|_{\mathcal H^\gamma(\mathbb R;X_{\mathbb C},Y_{\mathbb C})}.
\end{equation}

We define $\mathcal H^\gamma(0,T;X,Y)$,  $\langle \cdot ,\cdot \rangle_{\mathcal H^\gamma(0,T;X,Y)}$ and $\| \cdot\|_{\mathcal H^\gamma(\mathbb R;X,Y)}$ in the same way as \eqref{zerox}, \eqref{xH}  and \eqref{exnorm}.

Let $f,g\in H^\gamma(\mathbb R;X,Y)$.
Using \eqref{Xconj}  and \eqref{Fconj}, we can prove 
\begin{equation*}
    \int_{-\infty}^\infty |\tau|^{2\gamma}\langle \widehat f(\tau),\widehat g(\tau) \rangle_{Y_{\mathcal C}} {\rm d}\tau \in \mathbb R .
\end{equation*}
Thus, $\langle f,g\rangle_{\mathcal H^\gamma(\mathbb R;X_{\mathbb C},Y_{\mathbb C})} \in\mathbb R$. 
Then we can establish that $H^\gamma(\mathbb R;X,Y)$ in \eqref{reHg} with 
$\langle \cdot ,\cdot \rangle_{\mathcal H^\gamma(\mathbb R;X,Y)}$ in \eqref{reHnorm} is a real Hilbert space. 
As a consequence, we also have $\mathcal H^\gamma(0,T;X,Y)$ is a real Hilbert space.

Thanks to the basic property \eqref{xyC}, all inequalities 
\begin{equation}\label{realtrue}
    \text{\eqref{LLrel}, \eqref{LHrel}, \eqref{lower} and \eqref{LLrel2}, \eqref{LHrel2}, \eqref{lower2} still hold true}
\end{equation} for real Hilbert spaces  $X$ and $Y$.

\medskip
Consider the general case $\mathbb K=\mathbb R$ or $\mathbb C$,  and $X$, $Y$ are two Hilbert spaces over $\mathbb K$ that satisfy \eqref{cemd} and \eqref{cxy}.
Let $I=\mathbb R$ or $[0,T]$.
From \eqref{LLrel}, \eqref{LHrel}, \eqref{LLrel2}, \eqref{LHrel2} and \eqref{realtrue}, we have 
\begin{equation}\label{HXY}
\text{the continuous embeddings } \mathcal H^\gamma(I;X,Y)\subset  L^2(I;X)\subset     L^2(I;Y),
\end{equation}
with the notation $\mathcal H^\gamma([0,T];X,Y)=\mathcal H^\gamma(0,T;X,Y)$.
When $\gamma=0$, the first inclusion in \eqref{HXY} is actually an equality. Indeed, we have
\begin{equation*}
    \|f\|_{\mathcal H^0(I;X,Y)} =   \left( \|f\|_{L^2(I;X)}^2+\left \| f(\tau)\right\|_{L^2(I;Y)}^2\right)^{1/2}.
\end{equation*}
Together with \eqref{cxy} and/or \eqref{xyC}, this implies 
\begin{equation}\label{HzL2} 
\mathcal H^0(I;X,Y)=L^2(I,X)
\text{ with equivalent norms but different inner products.}
\end{equation}
Moreover, \eqref{lower}, \eqref{lower2} and \eqref{realtrue} imply that
 $\mathcal H^\gamma(\mathbb R;X,Y)$ is continuously embedded into $\mathcal H^\beta(\mathbb R;X,Y)$. 
However, we will identify similar spaces that enjoy a compact embedding.

\begin{lem}\label{compact}
Let $\mathbb K$ be either $\mathbb R$  or $\mathbb C$. 
Let $X_0,X,X_1$ be three Hilbert spaces over $\mathbb K$  such that $X_0\Subset X\subset X_1$ which means that the first embedding is compact and the second one is continuous.
    For any numbers $T>0$ and $\gamma,\beta\in[0,1]$ with $\gamma>\beta$, one has the compact embedding
    \begin{equation}\label{HHcom}
        \mathcal H^\gamma(0,T;X_0,X_1)\Subset \mathcal H^\beta(0,T;X,X_1).
    \end{equation}
\end{lem}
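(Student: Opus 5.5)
We will prove \eqref{HHcom} by showing that every bounded sequence $(f_n)_{n=1}^\infty$ in $\mathcal H^\gamma(0,T;X_0,X_1)$ has a subsequence that is Cauchy in $\mathcal H^\beta(0,T;X,X_1)$. By \eqref{exnorm} we pass to the zero extensions $\widetilde f_n\in\mathcal H^\gamma(\mathbb R;X_0,X_1)$, which are supported in $[0,T]$. In the real case we complexify, using \eqref{reHg}, \eqref{reHnorm} and \eqref{xyC}, so it suffices to treat $\mathbb K=\mathbb C$. Since $\mathcal H^\gamma(\mathbb R;X_0,X_1)$ is a Hilbert space, after extracting a subsequence we may assume $\widetilde f_n\rightharpoonup \widetilde f$ weakly. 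Replacing $f_n$ by $g_n=f_n-f$, we are reduced to the following claim: if $g_n\rightharpoonup 0$ weakly in $\mathcal H^\gamma(\mathbb R;X_0,X_1)$, with $\|g_n\|\le M$ and $\operatorname{supp} g_n\subset[0,T]$, then $\|g_n\|_{\mathcal H^\beta(\mathbb R;X,X_1)}\to 0$.

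The $\mathcal H^\beta$ norm has two parts, $\|g_n\|_{L^2(\mathbb R;X)}^2$ and $\int|\tau|^{2\beta}\|\widehat g_n\|_{X_1}^2\,{\rm d}\tau$. We split each integral (the first after Parseval in $X$) into $|\tau|\le R$ and $|\tau|>R$.

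For the high frequencies of the second part, $|\tau|^{2\beta}\le R^{2\beta-2\gamma}|\tau|^{2\gamma}$, so this piece is at most $R^{2(\beta-\gamma)}M^2$. This uses $\gamma>\beta$ and is small for $R$ large.

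For the high frequencies of the first part we cannot argue directly, because $\widehat g_n$ is controlled at large $|\tau|$ only in $X_1$, not in $X$. We use the Ehrling-type inequality: for every $\varepsilon>0$ there is $C_\varepsilon$ with $\|u\|_X\le\varepsilon\|u\|_{X_0}+C_\varepsilon\|u\|_{X_1}$. This follows by the standard contradiction argument from $X_0\Subset X\subset X_1$. It gives
\[
\|g_n\|_{L^2(\mathbb R;X)}^2\le 2\varepsilon^2 M^2+2C_\varepsilon^2\|g_n\|_{L^2(\mathbb R;X_1)}^2 .
\]
It is therefore enough to show $g_n\to 0$ in $L^2(\mathbb R;X_1)$. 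This reduces to the same frequency splitting with $X_1$ in place of $X$. The tail is bounded by $R^{-2\gamma}M^2$ when $\gamma>0$, which holds since $\gamma>\beta\ge 0$. The low frequencies are handled below.

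For the low frequencies, $|\tau|\le R$, we use the compact support. For each fixed $\tau$,
\[
\widehat g_n(\tau)=\int_0^T e^{2\pi i\tau t}g_n(t)\,{\rm d}t .
\]
The map $g\mapsto\widehat g(\tau)$ is a bounded linear functional-valued map $L^2(0,T;X_0)\to X_0$, so weak convergence of $g_n$ gives $\widehat g_n(\tau)\rightharpoonup 0$ weakly in $X_0$. By compactness of $X_0\Subset X$, this becomes $\widehat g_n(\tau)\to 0$ strongly in $X$, and hence also in $X_1$. Moreover, by Cauchy--Schwarz, $\|\widehat g_n(\tau)\|_{X_0}\le\sqrt T\,\|g_n\|_{L^2(0,T;X_0)}\le\sqrt T M$ uniformly in $n$ and $\tau$. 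Dominated convergence on the bounded set $[-R,R]$ then gives
\[
\int_{|\tau|\le R}(1+|\tau|^{2\beta})\|\widehat g_n\|_X^2\,{\rm d}\tau\to 0 .
\]
Combining the pieces: choose $\varepsilon$, then $R$, then $n$ large. This shows $g_n\to 0$ in $\mathcal H^\beta(0,T;X,X_1)$, which proves \eqref{HHcom}.

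The main obstacle is the $L^2(\mathbb R;X)$ part at high frequencies, since there is no decay of $\widehat g_n$ in the $X$-norm. It is handled by the Ehrling inequality together with the $|\tau|^\gamma$ control in $X_1$. One point to check carefully is that the weak limit $\widetilde f$ is still supported in $[0,T]$, so that the Cauchy--Schwarz bound on $\widehat g_n(\tau)$ applies. This holds because the functions supported in $[0,T]$ form a closed subspace, which is therefore weakly closed.
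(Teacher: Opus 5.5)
Your proof is correct and follows essentially the same route as the paper. You extract a weakly convergent subsequence of the zero extensions, subtract the limit, split the $|\tau|^{2\beta}$ integral at a frequency $R$ so that the tail is at most $R^{2(\beta-\gamma)}M^2$, and reduce the real case to the complex one by complexification. The only difference is that you write out the steps the paper cites from Temam's Theorem 2.2: the Ehrling inequality that reduces $L^2(\mathbb R;X)$ convergence to $L^2(\mathbb R;X_1)$ convergence, and the low-frequency argument using pointwise weak convergence of $\widehat g_n(\tau)$ in $X_0$, compactness, and dominated convergence. This makes your version self-contained.
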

\begin{proof}
Consider the case $\mathbb K=\mathbb C$ first.  
 We follow the proof of Theorem 2.2 in Section 2, Chapter 3 of \cite{TemamAMSbook}, pages 186--187 and to help the reader, use similar notation.
 Let $(w_m)_{m=1}^\infty$ be a bounded sequence in $\mathcal H^\gamma(0,T;X_0,X_1)$.
 Let $\widetilde w_m$ be the extension of $w_m$ with $0$ value outside of $[0,T]$.
 Then, by definitions \eqref{xH} and \eqref{exnorm}, $(\widetilde w_m)_{m=1}^\infty$ is  a bounded sequence in $\mathcal H^\gamma(\mathbb R;X_0,X_1)$.
 It follows that there are  a subsequence $(u_\mu)_{\mu=1}^\infty$ of $(\widetilde w_m)_{m=1}^\infty$ and  a function  $w\in  \mathcal H^\gamma(0,T;X_0,X_1)$ such that, with  $u=\widetilde w\in\mathcal H^\gamma(\mathbb R;X_0,X_1)$ as in \eqref{zerox}, one has  $u_\mu\to u$ weakly in both $L^2(\mathbb R;X_0)$ and $\mathcal H^\gamma(\mathbb R;X_0,X_1)$.
 
 Set $v_\mu=u_\mu-u$.  It suffices to prove $v_\mu\to 0$ as $\mu\to\infty$ in $\mathcal H^\beta(\mathbb R;X,X_1)$, which is equivalent to 
\begin{equation}\label{v1}
    v_\mu\to 0 \text{ in $L^2(\mathbb R;X)$ strongly as $\mu\to\infty$,} 
\end{equation}
and
\begin{equation}\label{v2}
    |\tau|^\beta\widehat v_\mu(\tau)\to 0 \text{ in $L^2(\mathbb R;X_1)$ strongly as $\mu\to\infty$.} 
\end{equation}

Note that \eqref{v1} is \cite[(2.31) on page 186]{TemamAMSbook} and was already proved on pages 186--187 of \cite{TemamAMSbook}. 
It suffices to prove \eqref{v2} now.
There is $C>0$ such that 
\begin{equation}\label{vmub}
    \|v_\mu\|_{\mathcal H^\gamma(\mathbb R;X_0,X_1)}\le C\text{ for all }\mu.
\end{equation}
Let 
$I_\mu=\int_{-\infty}^\infty |\tau|^{2\beta}\|\widehat v_\mu(\tau)\|_{X_1}^2{\rm d}\tau.$
For any number $M>0$, we rewrite and estimate $I_\mu$ by 
    \begin{align*}
        I_\mu
        &=\int_{|\tau|<M} |\tau|^{2\beta}\|\widehat v_\mu(\tau)\|_{X_1}^2{\rm d}\tau
            +\int_{|\tau|\ge M} \frac{|\tau|^{2\gamma}}{|\tau|^{2(\gamma-\beta)}}\|\widehat v_\mu(\tau)\|_{X_1}^2{\rm d}\tau\\
        &\le M^{2\beta}J_{\mu,M}  +\frac{1}{M^{2(\gamma-\beta)}}\int_{|\tau|\ge M} |\tau|^{2\gamma}\|\widehat v_\mu(\tau)\|_{X_1}^2{\rm d}\tau,
    \end{align*}
    where
    $J_{\mu,M}=\int_{|\tau|<M} \|\widehat v_\mu(\tau)\|_{X_1}^2{\rm d}\tau$.
    Together with \eqref{vmub}, this implies 
    \begin{equation}\label{Imue}
        I_\mu\le M^{2\beta}J_{\mu,M}  +\frac{1}{M^{2(\gamma-\beta)}} \|v_\mu\|_{\mathcal H^\gamma(\mathbb R;X_0,X_1)}^2
        \le M^{2\beta}J_{\mu,M}  +\frac{C^2}{M^{2(\gamma-\beta)}} .
    \end{equation}
It is proved in \cite[page 187]{TemamAMSbook} that $J_{\mu,M}\to 0$ as $\mu\to\infty$ for any $M>0$. 
Estimating the  limit superior of $I_\mu$ as $\mu\to\infty$ by \eqref{Imue},  and then passing $M\to\infty$, we obtain $I_\mu\to 0$ as $\mu\to\infty$, which proves \eqref{v2}.
This completes the proof of \eqref{HHcom} for the case $\mathbb K=\mathbb C$.

\medskip
Now consider the case $\mathbb K=\mathbb R$.
Let $X_{0,\mathbb C}$, $X_{\mathbb C}$, $X_{1,\mathbb C}$ be the complexifications of $X_0$, $X$, $X_1$, respectively.
    These complexified spaces satisfy $X_{0,\mathbb C}\Subset X_{\mathbb C}\subset X_{1,\mathbb C}.$
    Thanks to the first part of this proof when $\mathbb K=\mathbb C$, we have
       \begin{equation}\label{Cim}
        \mathcal H^\gamma(0,T;X_{0,\mathbb C},X_{1,\mathbb C})\Subset \mathcal H^\beta(0,T;X_{\mathbb C},X_{1,\mathbb C}).
    \end{equation}
    Let $(w_m)_{m=1}^\infty$ be a bounded sequence in  $\mathcal H^\gamma(0,T;X_0,X_1)$. 
\hilite{By the compact embedding \eqref{Cim}, there is a subsequence $(w_{m_j})_{j=1}^\infty$ converging in $\mathcal H^\beta(0,T;X_{\mathbb C},X_{1,\mathbb C})$ as $j\to\infty$ to a function $w\in \mathcal H^\beta(0,T;X_{\mathbb C},X_{1,\mathbb C})$.
As a consequence, $w_{m_j}$ converges in $L^2(0,T;X_{\mathbb C})$ to $w$ as $j\to\infty$. 
Then there is  a further subsequence $(w_{m(\mu)})_{\mu=1}^\infty$ of $(w_{m_j})_{j=1}^\infty$ such  that $w_{m(\mu)}(t)$ converges to $w(t)$ a.e. in $t\in [0,T]$ as $\mu\to\infty$. Since $w_{m(\mu)}(t)\in X$ a.e., we have $w(t)\in X$ a.e., which yields  $w\in \mathcal H^\beta(0,T;X,X_1)$. Therefore, $w_{m_j}$ converges to $w$ in  $\mathcal H^\beta(0,T;X,X_1)$ as $j\to\infty$, and we obtain \eqref{HHcom}.}
\end{proof}

Clearly, Theorem 2.2 in \cite[Section 2, Chapter 3]{TemamAMSbook}
corresponds to the case $\beta=0$ in \eqref{HHcom} thanks to \eqref{HzL2}.

\subsection{Application to the NSE} We will use the estimates (3.31) and (3.33) on page 193 of \cite{TemamAMSbook}, and apply the compact embedding in Lemma \ref{compact} to the spaces
$\mathcal H^{\gamma_k}(0,T; D(A^{\alpha_k}),H)$
for strictly deceasing $\gamma_k$ and $\alpha_k$.

Let $T$, $u_0$, $f$, $u_n$ and $v_n$ be as in the beginning of subsection \ref{maintime}.

\begin{thm}\label{tsoln}
    Let $\alpha_*\in(0,1/2)$ and $\gamma_*\in (0,1/4)$.
    Then there exists a subsequence of $v_n$, still denoted by $v_n$, that possesses a unitary or degenerate expansion
        \begin{equation}\label{vtex1}
        v_n\approx u+\sum_{k\in \mathcal N_2} \Gamma_{k,n} w_k\text{ in } \mathcal H^{\gamma_*}(0,T; D(A^{\alpha_*}),H),
    \end{equation}
    where $u$ is a weak solution the NSE \eqref{fNSE}.
   Consequently, one has a unitary or degenerate expansion
    \begin{equation}\label{vtex2}
        v_n\approx u+\sum_{k\in \mathcal N_2} \bar \Gamma_{k,n} \bar w_k\text{ in } L^2(0,T;D(A^{\alpha_*})).
    \end{equation}
\end{thm}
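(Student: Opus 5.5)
The plan is to mirror the proof of Theorem \ref{E1}: build a nested family of compactly embedded spaces, get a uniform bound for $v_n$ in a space compactly embedded in the first one, and then invoke Theorem \ref{maincor}. First I fix $\alpha_{-1}=1/2$ and $\gamma_{-1}\in(\gamma_*,1/4)$. I choose strictly decreasing sequences $(\alpha_k)_{k=0}^\infty$ in $(\alpha_*,1/2)$ and $(\gamma_k)_{k=0}^\infty$ in $(\gamma_*,\gamma_{-1})$, with $\alpha_0<1/2$ and $\gamma_0<\gamma_{-1}$. Then I set
\[
Z_k=\mathcal H^{\gamma_k}(0,T;D(A^{\alpha_k}),H),\quad k\ge 0,\qquad Z_\infty=\mathcal H^{\gamma_*}(0,T;D(A^{\alpha_*}),H).
\]
By \eqref{comp0}, $D(A^{\alpha_{k}})\Subset D(A^{\alpha_{k+1}})\subset H$. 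Lemma \ref{compact}, applied with $X_0=D(A^{\alpha_k})$, $X=D(A^{\alpha_{k+1}})$, $X_1=H$ and $\gamma_k>\gamma_{k+1}$, then gives $Z_k\Subset Z_{k+1}$. The same argument gives $Z_k\Subset Z_\infty$, and in particular a continuous embedding, since $\alpha_k>\alpha_*$ and $\gamma_k>\gamma_*$.

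Next comes the uniform bound. The estimates (3.31) and (3.33) of \cite{TemamAMSbook}, p.~193, say that the Galerkin solutions $u_n$ are bounded in $L^2(0,T;V)$ and that $\int_{\mathbb R}|\tau|^{2\gamma}\|\widehat{\widetilde u}_n(\tau)\|_{H}^2\,{\rm d}\tau$ is bounded for every $\gamma<1/4$. With $\gamma=\gamma_{-1}$, this means $(v_n)$ is bounded in $Z_{-1}:=\mathcal H^{\gamma_{-1}}(0,T;V,H)$. One point needs care: Temam's Fourier estimate is for the extension by zero, and that is exactly the convention \eqref{zerox} used here. Lemma \ref{compact} gives $Z_{-1}\Subset Z_0$, so after passing to a subsequence $v_n\to u^*$ in $Z_0$. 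Since $Z_0\subset L^2(0,T;H)$ continuously by \eqref{HXY}, and $v_n\to u$ in $L^2(0,T;H)$, we get $u^*=u$. Hence $v_n\to u$ in $Z_0$, where $u$ is the given weak solution.

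Theorem \ref{maincor}\ref{casingle} now yields a further subsequence with a unitary or degenerate expansion in $Z_\infty$, which is \eqref{vtex1}. For \eqref{vtex2}, the key fact is that $Z_\infty=\mathcal H^{\gamma_*}(0,T;D(A^{\alpha_*}),H)$ embeds continuously into $L^2(0,T;D(A^{\alpha_*}))$ by \eqref{HXY}. I would therefore apply Theorem \ref{maincor}\ref{casingle} again, this time to the same nested family $\mathcal Z$ with $L^2(0,T;D(A^{\alpha_*}))$ as the target space, taking a further subsequence that also keeps \eqref{vtex1}. This is the reason the coefficients and vectors in \eqref{vtex2} are relabelled $\bar\Gamma_{k,n},\bar w_k$. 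I would do it this way rather than transfer the expansion \eqref{vtex1} directly: the unit-norm normalization in Definition \ref{refinex} does not survive a change of norm, so the vectors would have to be renormalized, and that renormalization can break the ratios $\Gamma_{k+1,n}/\Gamma_{k,n}\to 0$ when $w_k$ has to be rescaled.

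The main obstacle I expect is checking that Temam's bound really controls the norm of $Z_{-1}$ as defined in \eqref{Hgnorm}. The fractional-derivative part is measured in $H$ and the $L^2$ part in $V$. The fractional-derivative estimate in \cite{TemamAMSbook} is derived in three dimensions, with the restriction $\gamma<1/4$, from the bound on $\|P_nB(u_n,u_n)\|$ in $L^{1}(0,T;V')$, or in $L^{4/3}(0,T;V')$, together with the jump terms at $t=0,T$. Its conclusion is precisely $\|\,|\tau|^{\gamma}\widehat{\widetilde u}_n\|_{L^2(\mathbb R;H)}\le C$, which is what is needed. The remaining step is a routine check that every embedding in the chain is compatible with the real/complex conventions of \eqref{reHg}.
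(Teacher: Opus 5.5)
Your proof of \eqref{vtex1} is the paper's proof. It uses the same nested spaces $Z_k=\mathcal H^{\gamma_k}(0,T;D(A^{\alpha_k}),H)$ with strictly decreasing $\alpha_k,\gamma_k$, compactness from Lemma \ref{compact}, and the bound in $\mathcal H^{\gamma_{-1}}(0,T;V,H)$ from Temam's estimates. It also identifies the limit with $u$ through $L^2(0,T;H)$ and then applies Theorem \ref{maincor}\ref{casingle} in $Z_*=\mathcal H^{\gamma_*}(0,T;D(A^{\alpha_*}),H)$.

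For \eqref{vtex2} you depart from the paper, and your reason for doing so is mistaken. The paper simply renormalizes \eqref{vtex1} in $Z=L^2(0,T;D(A^{\alpha_*}))$, using $\|w\|_Z\le\|w\|_{Z_*}$:

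\begin{itemize}
\item For $w_k\neq0$ it sets $\bar\Gamma_{k,n}=\|w_k\|_Z\Gamma_{k,n}$, $\bar w_k=w_k/\|w_k\|_Z$ and $\bar w_{k,n}=w_{k,n}/\|w_k\|_Z$.
\item It leaves the zero tail of a degenerate expansion as it is.
\end{itemize}

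The factors $\|w_k\|_Z$ are fixed and positive, because the embedding $Z_*\subset Z$ is injective. Hence $\bar\Gamma_{k+1,n}/\bar\Gamma_{k,n}=(\|w_{k+1}\|_Z/\|w_k\|_Z)\,\Gamma_{k+1,n}/\Gamma_{k,n}\to0$. Where a zero tail begins, $\Gamma_{N+1,n}/(\|w_N\|_Z\Gamma_{N,n})\to0$. The convergence $\bar w_{k,n}\to\bar w_k$ in $Z$ follows from convergence in $Z_*$. Unitary and degenerate expansions require unit norm only for the limit vectors, not the remainders, so no ratio condition breaks.

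This gives \eqref{vtex2} on the same subsequence, with the same index set $\mathcal N_2$ and with $\bar w_k$ parallel to $w_k$. That is exactly what the shared symbol $\mathcal N_2$ and the word ``Consequently'' in the statement assert.

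Your second black-box application of Theorem \ref{maincor}\ref{casingle}, with target $L^2(0,T;D(A^{\alpha_*}))$, is legitimate: its hypotheses hold, and \eqref{vtex1} survives the extra subsequence. However, it produces only \emph{some} expansion in that space, unrelated to \eqref{vtex1}. Such expansions are not unique. For example, in $\mathbb R$ both $1/n=(1/n)\cdot1$ and $1/n=(1/n+1/n^2)\cdot1+(1/n^2)\cdot(-1)$ are unitary expansions, with different index sets. So you cannot conclude that its index set is $\mathcal N_2$, or that its vectors are related to the $w_k$.

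Your route therefore proves only a weaker form of the second claim, at the cost of an extra extraction. The direct renormalization is simpler and gives the full statement.
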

\begin{proof}
(a) 
Let $$1/2=\alpha_{-1}>\alpha_0>\alpha_1>\alpha_2>\ldots>\alpha_* $$ and $$1/4>\gamma_{-1}>\gamma_0>\gamma_1>\gamma_2>\ldots>\gamma_*.$$
For $k\ge -1$, define
\begin{equation*}
    Z_k=\mathcal H^{\gamma_k}(0,T; D(A^{\alpha_k}),H),\quad Z_*=\mathcal H^{\gamma_*}(0,T; D(A^{\alpha_*}),H).
\end{equation*}
Then each $Z_k$ is a normed space over $\mathbb R$.
For $k\ge -1$, we have $D(A^{\alpha_k})\Subset D(A^{\alpha_{k+1}})$, and hence, by the virtue of Lemma \ref{compact}, 
\begin{equation} \label{comnest}
Z_k\Subset Z_{k+1}\subset Z_*.
\end{equation}

According to property (3.33) with the condition on the power $\gamma\in(0,1/4)$ after (3.38) in  \cite[subsection 3.2]{TemamAMSbook}, $u_n$, and hence $v_n$, are  bounded sequences in $\hilite{\mathcal H}^{\gamma_{-1}}(0,T; V,H)=Z_{-1}$. By \eqref{comnest} with $k=-1$, we have the the compact embedding $Z_{-1}\Subset Z_0$, hence there is a subsequence of $v_n$, still denoted by $v_n$, such that $v_n$ converges to $v$ in $Z_0$.
Thanks to \eqref{compact}, we can apply Theorem \ref{maincor} to obtain a subsequence, still denoted by $v_n$, which has a unitary or degenerate expansion
    \begin{equation}\label{vtex3}
        v_n\approx v+\sum_{k\in \mathcal N_2}^\infty \Gamma_{\hilite{k,n}} w_k\text{ in } Z_*.
    \end{equation}
Observe that $v_n\to u$ in $L^2(0,T;H)$ and,  from \eqref{vtex3},  $v_n\to v$ in $Z_*$ which is continuously embedded in $L^2(0,T;H)$, see \eqref{LHrel2} and \eqref{realtrue}. Thus, we have $u=v$ and obtain \eqref{vtex1} from \eqref{vtex3}.

(b) Let $Z=L^2(0,T;D(A^{\alpha_*}))$. 
Applying the first inequality in \eqref{LHrel2}, see \eqref{realtrue}, to $X=D(A^{\alpha_*})$ and $Y=H$, we have
\begin{equation}\label{LHZ}
    \|w\|_Z\le \|w\|_{Z_*} \text{ for all }w\in Z_*.
\end{equation}
We normalize the asymptotic expansion \eqref{vtex3} in $Z$ as follows. Suppose, corresponding to \eqref{vtex3}, we have
\begin{equation*}
    v_n=u+\sum_{j=1}^{k-1} \Gamma_{j,n}w_j+w_{k,n}.
\end{equation*}
Let $ \bar \Gamma_{n,k}=\|w_k\|_{Z}\Gamma_{n,k}$,  $\bar w_k=w_k  / \|w_k\|_{Z}$, and $\bar w_{k,n}=w_{k,n}  / \|w_k\|_{Z}$
whenever $w_k\ne 0$. 
Then we obtain from \eqref{vtex3} and \eqref{LHZ} the  unitary or degenerate expansion
\begin{equation}\label{vtex4}
        v_n\approx u+\sum_{k\in \mathcal N_2} \bar \Gamma_{k,n} \bar w_k\text{ in } Z,
    \end{equation}
which is  \eqref{vtex2}.
\end{proof}

\begin{proof}[Proof of Theorem \ref{tsoln0}]
    In the proof part (b) of Theorem \ref{tsoln}, we instead take $Z=L^2(0,T;H)$. Then we still have inequality \eqref{LHZ} and obtain the unitary or degenerate expansion \eqref{vtex4} which proves \eqref{vtex0}.
\end{proof}

\begin{obs}
To find more properties of the asymptotic expansion \eqref{vtex1}, better estimates for ${\rm d}u_m/{\rm d}t$ and (probably) more regularity for $f$ are  needed. We will deal with these issues in another work. 
\end{obs}

\appendix

\section{Proofs of some basic inequalities}\label{ProofAB}

Below, the Lebesgue and Sobolev norms are meant to be on $\Omega$.

\begin{proof}[Proof of the second inequality in \eqref{AHm}]
   We have from \eqref{AP} and  \eqref{Preg} that
\begin{equation}\label{Ak1}
\|Au\|_{H^k(\Omega)^d}=\|\mathcal P \Delta u\|_{H^k(\Omega)^d}\le C_k\|\Delta u\|_{H^k(\Omega)^d}\le C_k\|u\|_{H^{k+2}(\Omega)^d}.
\end{equation}
We claim that  
\begin{equation}\label{Ak2}
\|A^m u\|_{H^k(\Omega)^d}\le C_{m,k}\|u\|_{H^{2m+k}(\Omega)^d} \text{ for all } m,k\in\mathbb N\cup\{0\}.
\end{equation}
We prove \eqref{Ak2} by induction in $m$.
The inequality \eqref{Ak2} holds for $m=0$ trivially, and for $m=1$ thanks to \eqref{Ak1}.
Let $m\ge 1$, suppose 
\begin{equation}\label{Ak5}
    \|A^m u\|_{H^k}\le C_{m,k}\|u\|_{H^{2m+k}} \text{ for all } k\in\mathbb N\cup\{0\}.
\end{equation}
Then we have $\|A^{m+1} u\|_{H^k}=\|A(A^{m} u)\|_{H^k}$, and after applying \eqref{Ak1} and then \eqref{Ak5},
\begin{equation*}
\|A^{m+1} u\|_{H^k}\le C_{1,k}\|A^m u\|_{H^{k+2}}\le C_{1,k}C_{m,k+2}\|u\|_{H^{2m+k+2}}=C_{m+1,k}\|u\|_{H^{2(m+1)+k}} 
\end{equation*}
with $C_{m+1,k}=C_{1,k}C_{m,k+2}$.
Thus, by the Induction Principle, \eqref{Ak2} is true.

With $k=0$ in \eqref{Ak2} we obtain the second inequality in \eqref{AHm} for even numbers $m\ge 0$.
Consider  $m=2j+1$ with an integer $j\ge0$. We have from \eqref{Vnorm}, \eqref{VH1} and \eqref{Ak2} that
\begin{equation*}
|A^{m/2} u|=|A^{j+1/2} u|=|A^{1/2}(A^{j} u)|\le \|A^j u\|_{H^1}\le C\|u\|_{H^{2j+1}}=C\|u\|_{H^{m}}. 
\end{equation*}
Thus the second inequality in \eqref{AHm} is true for all integers $m\ge 0$.
\end{proof}

\begin{proof}[Proof of inequality \eqref{AjB}]
The symbol $C$ below denotes a generic positive constant with varying values.
Consider  $u,v\in P_n H$ first. 
By \eqref{BP}, \eqref{AHm} and \eqref{Preg}, we have
\begin{equation}\label{ABpre}
    |A^{m/2} B(u,v)|=|A^{m/2} (\mathcal P(u\cdot\nabla v))|\le  C \|\mathcal P(u\cdot\nabla v)\|_{H^m}\le C \|u\cdot \nabla v\|_{H^m}.
\end{equation}
We estimate the last norm of the product next. 
For $k\in \mathbb N\cup\{0\}$, we denote 
\begin{equation*}
    |D^k u(\bx)|=\sum_{|\omega|=k} |D^\omega u(\bx)|, \text{ where $\omega$ denotes a multi-index,}
\end{equation*}
and $|D^\omega u(\bx)|$ is the Euclidean norm in $\mathbb R^d$.
By the product rule, we have
\begin{equation}\label{Dkuu}
    |D^k (u\cdot\nabla \hilite{v})(\bx)|\le C\sum_{j=0}^k |D^j u(\bx)|\cdot |D^{k-j+1}v(\bx)|.
\end{equation}
If $k\le m-1$, then \eqref{Dkuu} yields
\begin{equation}\label{Dk1}
    |D^k (u\cdot\nabla \hilite{v})(\bx)|\le C\left(\sum_{j=0}^{m-1}|D^{j}u(\bx)|\right)\left(\sum_{\ell=0}^{m}|D^{\ell}v(\bx)|\right)
    \le C\sum_{j,\ell=0}^{m}|D^{j}u(\bx)|\cdot |D^{\ell}v(\bx)|.
\end{equation}
If $k=m$, then by splitting the sum in \eqref{Dkuu} into $j=0$ and $1\le j\le m$, we have
\begin{equation}\label{Dk2}
\begin{aligned}
    |D^m (u\cdot\nabla v)(\bx)|
    &\le C|u|\cdot|D^{m+1}v(\bx)|+C\left(\sum_{j=1}^{m}|D^{j}u(\bx)|\right)\left(\sum_{\ell=1}^{m}|D^{\ell}v(\bx)|\right)\\
        &\le C|u|\cdot|D^{m+1}v(\bx)|+C\sum_{j,\ell=0}^{m}|D^{j}u(\bx)|\cdot |D^{\ell}v(\bx)|.
\end{aligned}
\end{equation}
Combining \eqref{Dk1} and \eqref{Dk2} give
\begin{equation}\label{HuDv}
\|u\cdot\nabla v\|_{H^m}
\le C\left (\left\| \,|u(\bx)|\cdot|D^{m+1}v(\bx)|\,\right\|_{L^2}
+\sum_{j,\ell=0}^{m}\left\|\, |D^{j}u(\bx)|\cdot |D^{\ell}v(\bx)|\,\right\|_{L^2}\right).
\end{equation}
For the first norm on the right-hand side of \eqref{HuDv}, we estimate $|u(\bx)|$ by its $L^\infty$-norm and apply Agmon's inequality.
For the second norm on the right-hand side of \eqref{HuDv},  we apply H\"older's inequality with power $3/2$ for $|D^j u(\bx)|^2$ and power $3$ for $|D^\ell v(\bx)|^2$, then use the interpolation inequality $\|\cdot\|_{L^3}\le \|\cdot\|_{L^2}^{1/2}\|\cdot\|_{L^6}^{1/2}$,  and finally the Gagliardo--Nirenberg--Poincar\'e--Sobolev inequality.
Together with \eqref{ABpre}, they result in
\begin{align*}
    |A^{m/2} B(u,v)|
    &\le  C \|u\|_{H^1}^{1/2}\|u\|_{H^2}^{1/2}\|v\|_{H^{m+1}} +C \sum_{j,\ell=0}^{m}\|\,|D^{j}u|\,\|_{L^3} \|\,|D^{\ell}v|\,\|_{L^6}\\
    &\le  C \|u\|_{H^1}^{1/2}\|u\|_{H^2}^{1/2}\|v\|_{H^{m+1}}+C \sum_{j,\ell=0}^{m}\|\,|D^{j}u|\,\|_{L^2}^{1/2}\|\,|D^{j}u|\,\|_{L^6}^{1/2}\cdot  \|\,|D^{\ell}v|\, \|_{L^6}\\
    &\le  C \|u\|_{H^1}^{1/2}\|u\|_{H^2}^{1/2}\|v\|_{H^{m+1}}+C \|u\|_{H^{m}}^{1/2}\|u\|_{H^{m+1}}^{1/2}\|v\|_{H^{m+1}}.
\end{align*}
Using the fact $m\ge 1$ and estimating the Sobolev norms by the first inequality in \eqref{AHm}, we obtain 
\begin{equation}\label{Bk1}
    |A^{m/2} B(u,v)| 
    \le C \|u\|_{H^{m}}^{1/2}\|u\|_{H^{m+1}}^{1/2}\|v\|_{H^{m+1}}
    \le   C |A^{m/2}u|^{1/2}|A^{(m+1)/2}u|^{1/2}|A^{(m+1)/2}v|.
\end{equation}

For any $u,v\in D(A^{(m+1)/2})$, by applying \eqref{Bk1} to $u:=P_nu$ and $v:=P_n v$ and passing $n\to\infty$, we  obtain \eqref{AjB}.
\end{proof}

\section{Computational example - by Chengzhang Fu}\label{compuapx}

\hilite{The computations below illustrate some of the theoretical findings. They do not exactly follow the theory in the paper as they are done using a pseudo-spectral approximation of the NSE in vorticity form. Specifically, we present several numerical experiments to exemplify two of the cases in Theorem \ref{E3b}. The vorticity formulation reads as}
\begin{align}\label{NSEom}
    \frac {\partial \omega} {\partial t}  -\nu\Delta \omega + \bu \cdot \nabla \omega = g\;,\quad  u =\nabla^{\perp}\psi,\quad -\Delta \psi =\omega
\end{align}
where $g=\nabla \times \bF$.  This means that instead of the restriction of the integer lattice to a disk as for $P_n$, we restrict it to the square $[-n/2,n/2]\times [-n/2,n/2]$.  

We construct a scalar field $\omega(x,y)$ on the periodic domain
$\Omega=[0,2\pi]^2$ as follows.  First, we define the cubic spline
\[
\sigma(s) = 1 - 3s^2 + 2s^3, \qquad s \in [0,1].
\]
This is combined with the even reflection map
\[
\phi(z)=
\begin{cases}
\dfrac{z}{\pi}, & 0 \le z \le \pi, \\[6pt]
\dfrac{2\pi - z}{\pi}, & \pi < z \le 2\pi 
\end{cases}
\]
to define a scalar field $\omega(x,y)$ 
\[
\omega(x,y)
=
\sigma(\phi(x))\, \sigma(\phi(y))
\left(1 + \tfrac{1}{4}\cos(8x)\cos(6y)\right).
\]
This serves will serve as the limit of our sequence.  It is balanced by the force 
\begin{equation} \label{rel:ug}
g(x,y)
=
- \nu \Delta \omega(x,y)
+ \bu(x,y) \cdot \nabla \omega(x,y).
\end{equation}

Since the fast Fourier transform works best with powers of small primes, we take $n=2^p3^q$.
We start from the zero initial condition for $n_1=32$, evolve \eqref{NSEom} until a steady state is (nearly) reached, increase the resolution to the  $n_2=36$ and resume the evolution until a steady state is reached again.  This is repeated through resolution $n_{44}=4096$.
The corresponding physical-space visualizations of $\omega$ and $g$ are shown
in Figure~\ref{Fig:phys}.  The time stepping is done by a 3rd order Adams-Bashforth method with $\Delta t = 10^{-3}$ and the viscosity is taken to be $\nu=0.01$.

\begin{figure}[H]
\centerline{
\includegraphics[width=8.5cm, height=6.5cm]{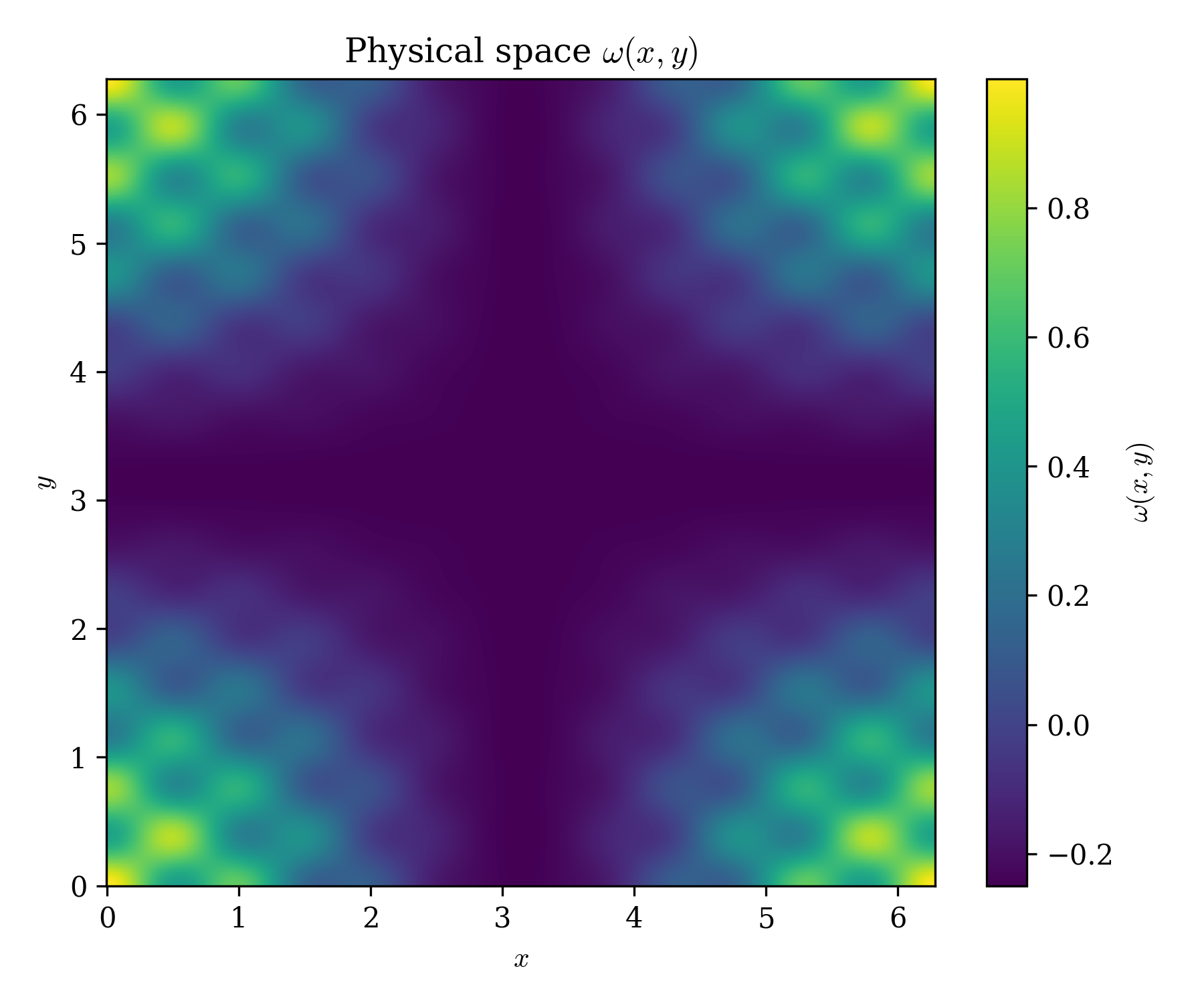}\
\includegraphics[width=8.5cm, height=6.5cm]{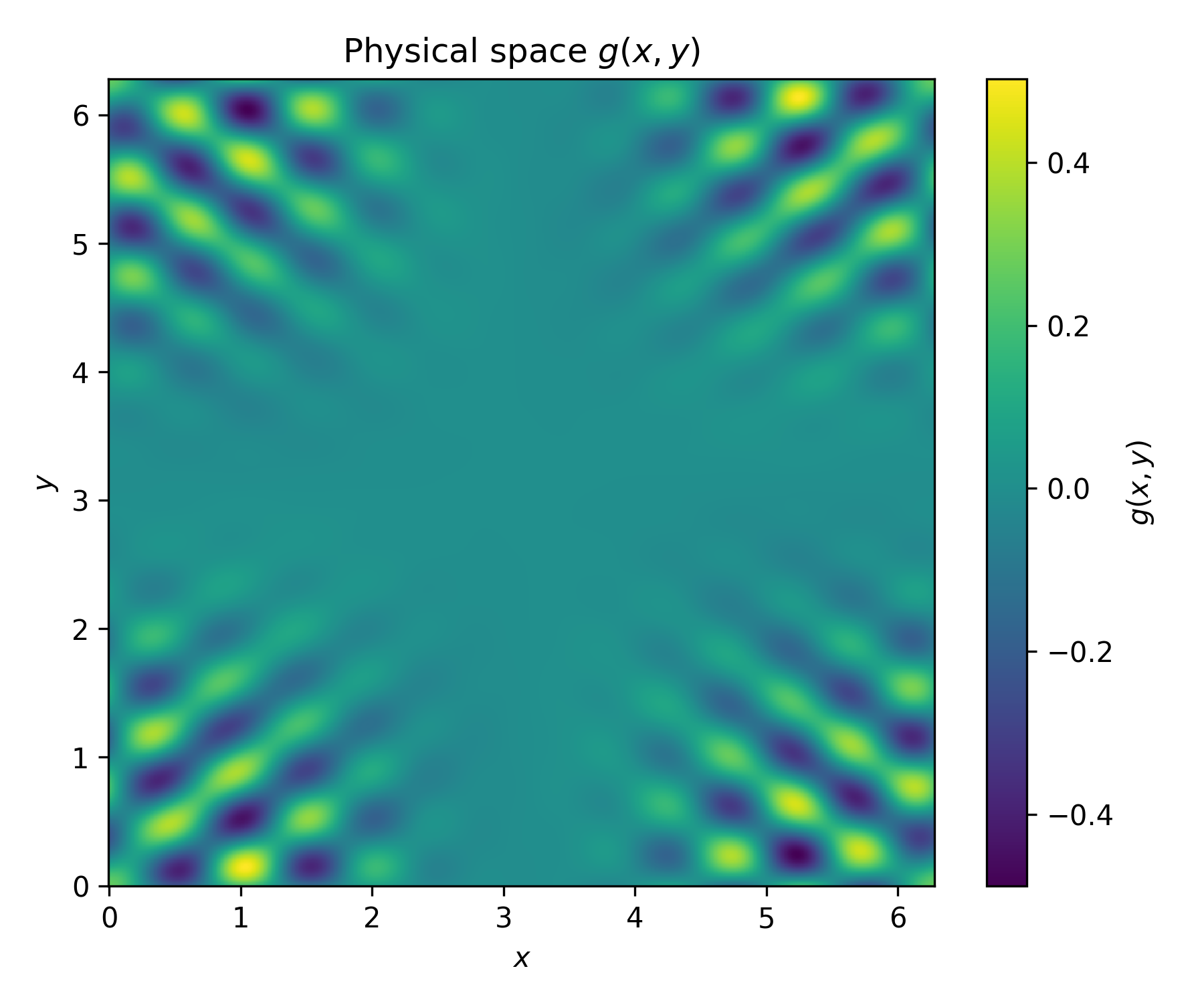}}
\caption{Physical-space plots of the scalar field $\omega$ (left) and the forcing term $g$ (right)}
\label{Fig:phys}
\end{figure}

We define three diagnostic quantities to measure the convergence of the
approximating sequence. Let $\tilde b=\bu(x,y) \cdot \nabla \omega(x,y)$, and denote the corresponding Galerkin (pseudospectral) approximations by
$\omega_n$, $\tilde b_n$, and $g_n$.
Taking $Z_0=D(A^{1/2})$, we recognize the  coefficients in the expansions for the velocity form of the NSE by
\begin{equation*}
\Gamma_{1,n}
= \|\omega_n - \omega\|_{L^2}, \quad
\rho_{1,n}
= \|\tilde b_n - \tilde b\|_{L^2}, \quad
G_{n}
= \|g_n - g\|_{L^2}.
\end{equation*}
The numerical results for $\Gamma_{1,n}$, $\rho_{1,n}$, and $G_{n}$, together
with their corresponding quotients, are shown in 
 Figure~\ref{Fig:conv1}.    With $\Delta t = 10^{-3}$, the Adams-Bashforth method has a
  \hilite{global} truncation error of size $\mathcal{O}(\Delta t^3)\approx 10^{-9}$. This could explain why $\Gamma_{1,n}$ does not get close to machine precision $(10^{-15})$.
  
\begin{figure}[ht]
\centerline{
\includegraphics[width=8.5cm, height=6.5cm]{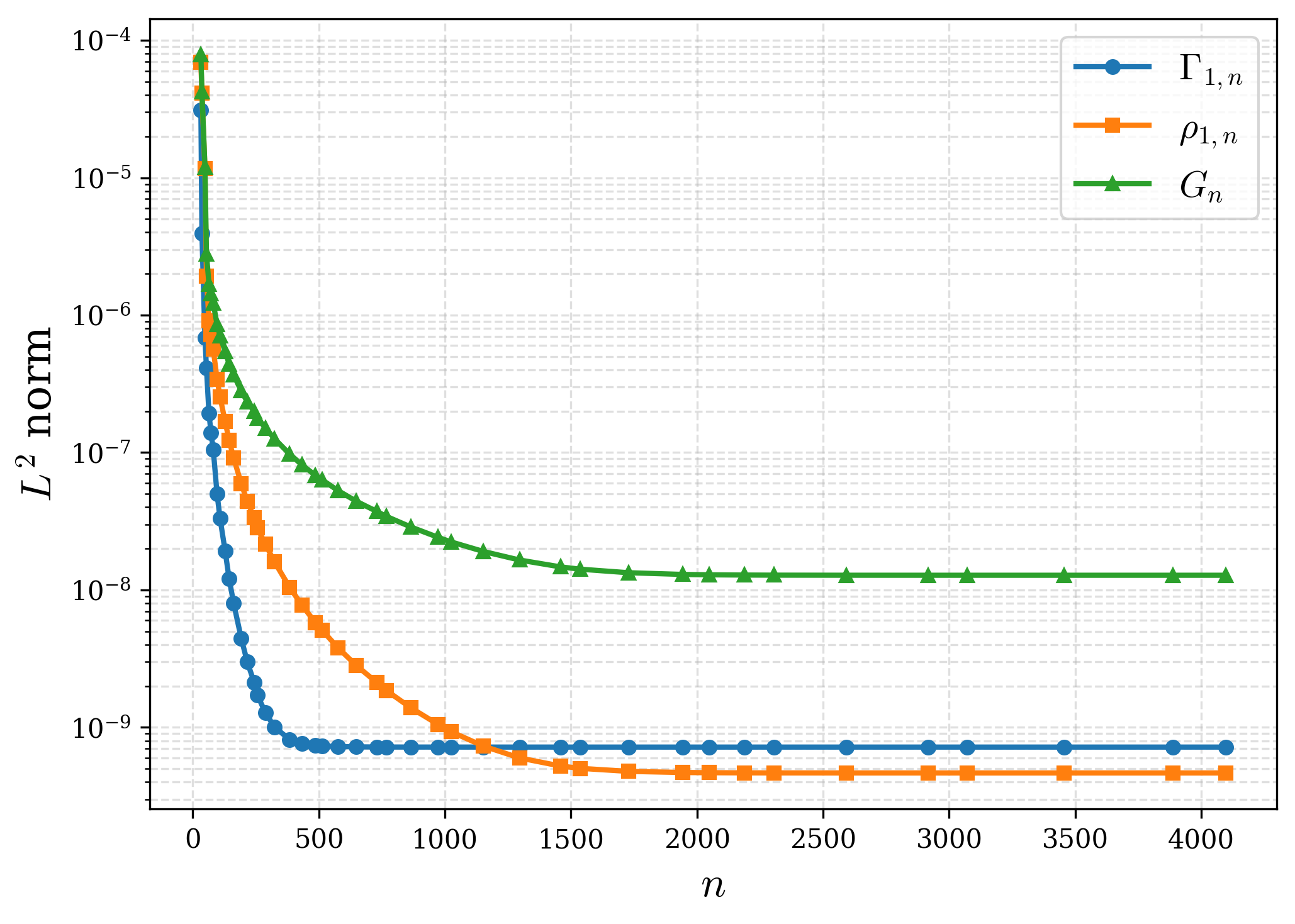}\
\includegraphics[width=8.5cm, height=6.5cm]{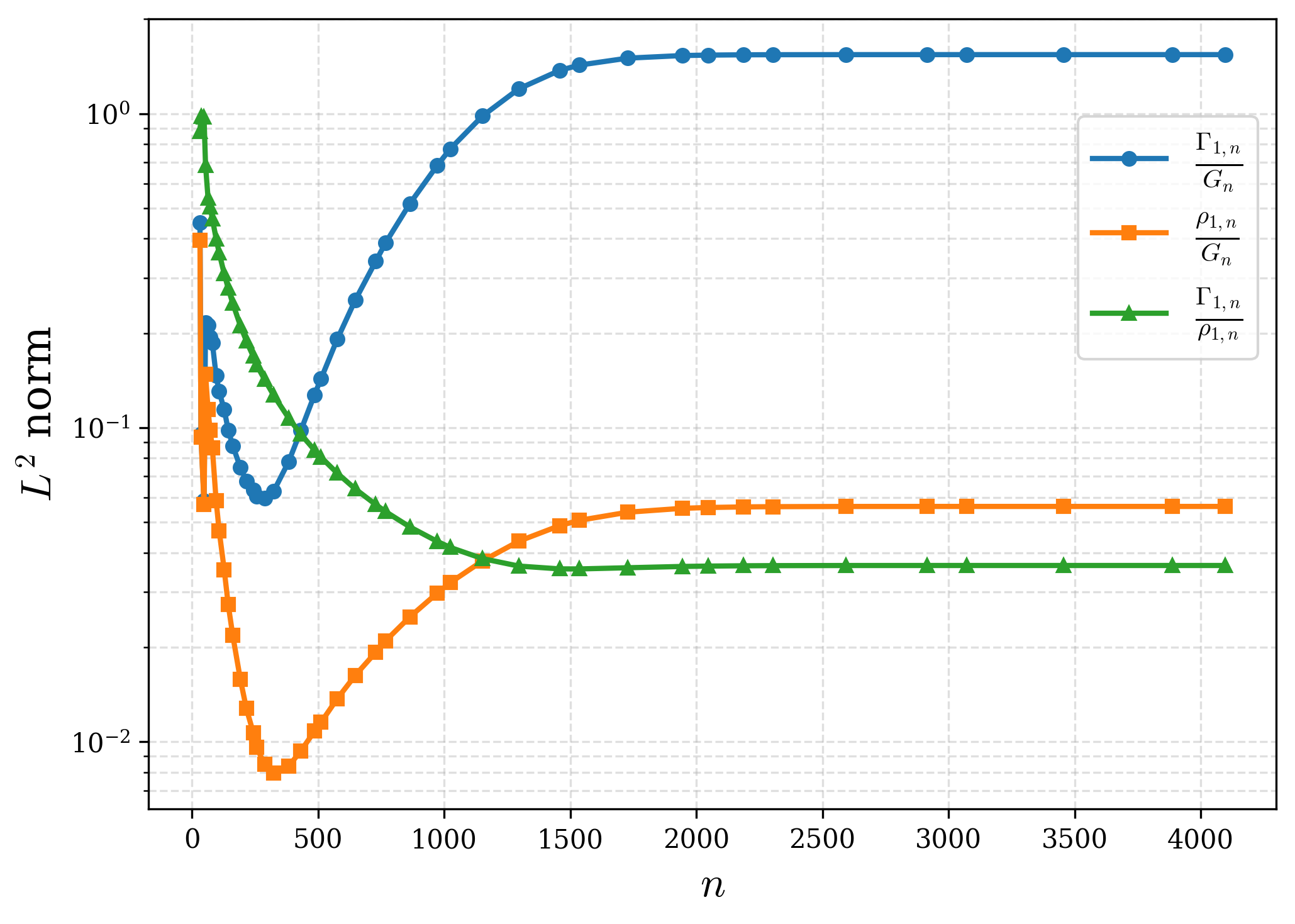}}
\caption{Left: convergence plots for $\Gamma_{1,n}$, $\rho_{1,n}$, and $G_{n}$. Right: the corresponding quotient plots.}
\label{Fig:conv1}
\end{figure}

Since it appears that $\Gamma_{1,n} \sim \rho_{1,n} \sim G_n$ we expect for Case 2 of Theorem \ref{E3b} to hold.  We do not know a sharp estimate for $\alpha_*$ such that $f\in D(A^{\alpha_*})$, but it seems safe to take $\alpha=1/2$. Since $\omega \in C^2$ so $u \in D(A^{3/2}$). 
To demonstrate the convergence rate
\[
\|v_n -v\|_{D(A)}
=
\mathcal{O}\!\left(\lambda_{n+1}^{-\alpha_*}\right).
\]
we plot in Figure \ref{Fig:alphastar_big} (left) the product 
$
\|v_n -v\|_{D(A)}
\cdot
\lambda_{n+1}^{\alpha_*}
$.
We suspect the bend upwards at the higher resolutions is due to the time step being too large.  With $\Delta t = 10^{-3}$, the Adams--Bashforth method has a
  \hilite{global} truncation error of size $\mathcal{O}(\Delta t^3)\approx 10^{-9}$. This could also explain why $\Gamma_{1,n}$ does not get close to machine precision $(10^{-15})$.

\begin{figure}[H]
\centerline{
\includegraphics[width=8.5cm,height=6.5cm]{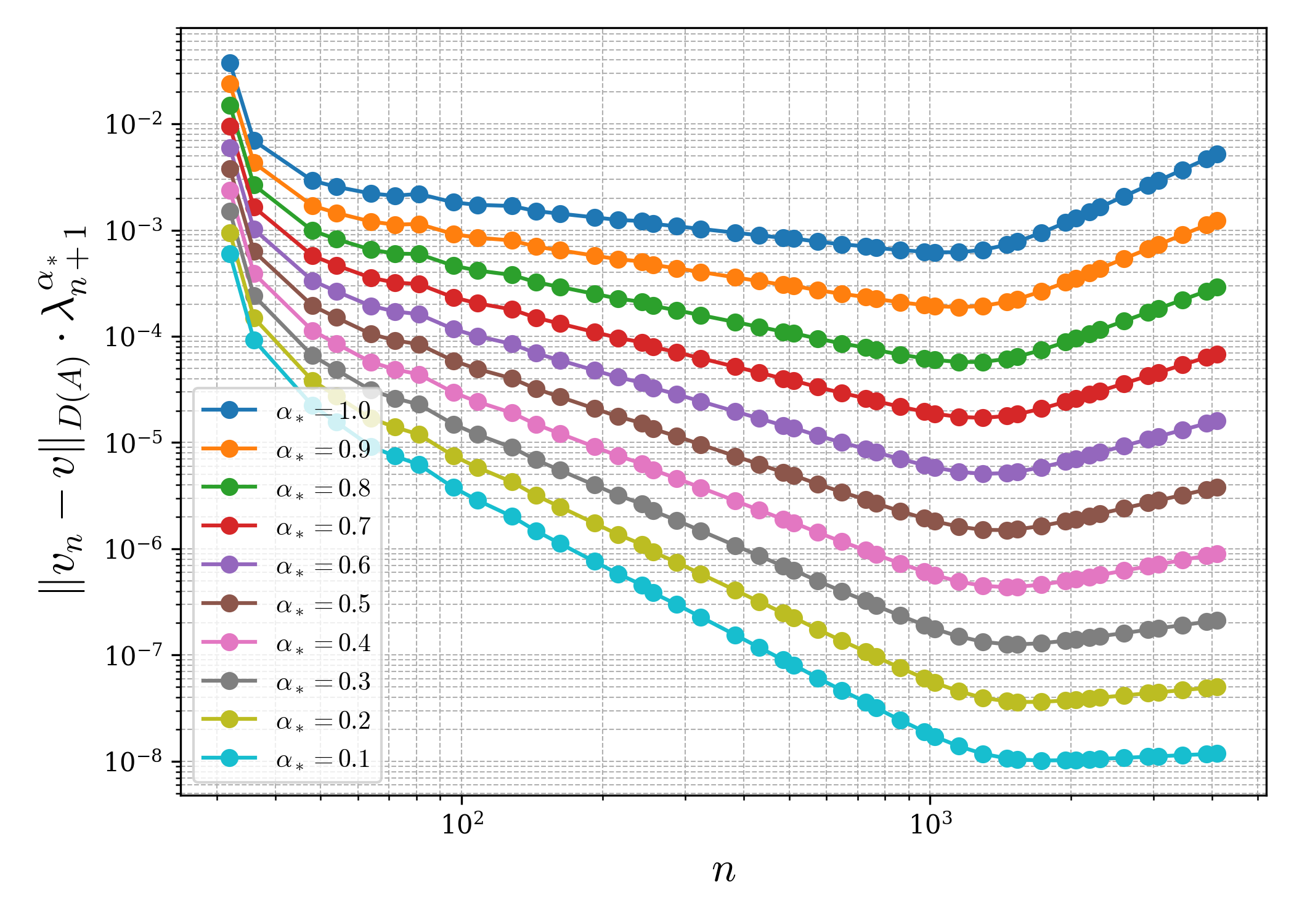}\,
\includegraphics[width=8.5cm,height=6.5cm]{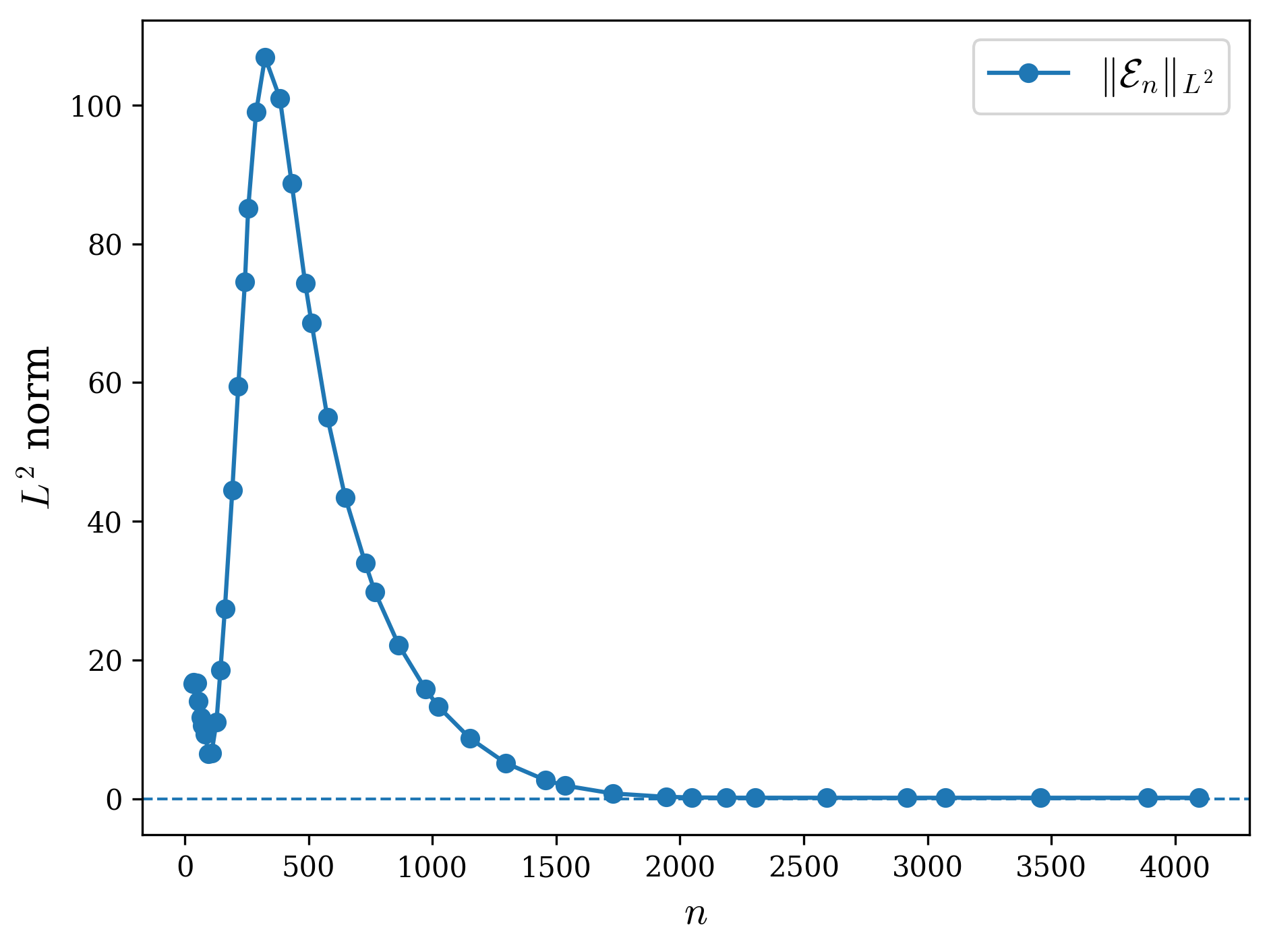}}
\caption{Left: the product 
$
\|v_n -v\|_{D(A)}
\cdot
\lambda_{n+1}^{\alpha_*}
$ for various values of $\alpha_*$.  Right: convergence $\mathcal{E}_n \to 0$, where $\mathcal{E}_n$ is defined in \eqref{EEn}.}
\label{Fig:alphastar_big}
\end{figure}

Consider the normalized vectors
\[
w_n^{(1)} =\frac{v_n - v}{\Gamma_{1,n}}, \qquad b_n^{(1)} =\frac{ b_n - b}{\rho_{1,n}},
\qquad 
f_n^{(1)}=\frac{f_n-f}{G_n} =\frac{Q_n f}{G_n}
\]
In the constructive existence proof for an intrinsic expansion of a convergent sequence in \cite{FHJ} as well as in \cite{HJ1} the first vector can be found directly provided the limit of the normalized vector exists.   In the particular expansions 
$$
v_n \approx v+ \sum \Gamma_{k,n} w_k, \qquad 
b_n \approx b+ \sum \rho_{k,n} b_k, \qquad
f_n \approx f+ \sum f_{k,n} \phi_k
$$
we would have
\[
w_1=\lim_{n \to \infty}w_n^{(1)}, \qquad
b_1=\lim_{n \to \infty}b_n^{(1)},
\qquad 
\phi_1=\lim_{n \to \infty}f_n^{(1)}\;.
\]
Another consequence of $\Gamma_{1,n} \sim \rho_{1,n} \sim G_n$ is that these vectors should satisfy 
$$\nu Aw_1+\mu_0 b_1 = \mu_{00}\phi_1\;,$$
where 
$$\mu_0=\lim_{n\to\infty}\frac{\rho_{1,n}}{\Gamma_{1,n}}\;, \quad 
\mu_{00} =\lim_{n\to\infty}\frac{G_n}{\Gamma_{1,n}}
$$
Note that this is similar to a relation in Case 1 (i) of Theorem \ref{E3b} where $\mu_{00}=0$.
Applying the curl operator, we have for the same coefficients $\mu_0$, $\mu_{00}$
$$\nu A\omega_1 + \mu_0 \tilde b_1 =\mu_{00}\gamma_1$$
where $\omega_1=\nabla \times w_1$, $\tilde b_1=\nabla \times b_1$ and $\gamma_1=\nabla\times \phi_1$.

The convergence 
\begin{equation}\label{EEn}\mathcal{E}_n=\nu A\frac{\omega_n-\omega}{\Gamma_{1,n}} + \frac{\rho_{1,n}}{\Gamma_{1,n}} 
\frac{\tilde b_n -\tilde b}{\rho_{1,n}} - 
\frac{G_{n}}{\Gamma_{1,n}}\frac{Q_ng}{G_n} \to  0  \quad \text{in}\  L^2(\Omega)
\end{equation}
 is demonstrated in Figure \ref{Fig:alphastar_big} (right). 

\bigskip
\noindent\textbf{Methods.} 
No Artificial Intelligence Generated Content (AIGC) tools are used in developing any portion of this paper. 

\medskip
\noindent\textbf{Conflict of interest.}
There are no conflicts of interests.

 \bibliographystyle{abbrv}
 \bibliography{HJ2cite.bib}

\end{document}